 \DeclareMathOperator{\Deck}{Deck}
 \newcommand{\Z}{\mathbb{Z}}
\newcommand{\mate}{\perp \! \! \! \perp}
\newcommand{\mix}{\mathbin{\rotatebox[origin=c]{90}{$\ltimes$}}}
\newtheorem{theorem}{Theorem}[section] 
\newtheorem{definition}[theorem] {Definition}
\newtheorem{lemma}[theorem]{Lemma}
\newtheorem{corollary}[theorem]{Corollary}
\newtheorem{remark}[theorem]{Remark}
\newtheorem{proposition}[theorem]{Proposition}
\newtheorem*{Zieve}{Theorem}
\theoremstyle{definition}
\newtheorem{example}[theorem]{Example}
\newtheorem{question}{Question}
\newcommand{\hatC}{\hat{\mathbb{C}}}
 \def\l@subsection{\@tocline{2}{0pt}{2pc}{6pc}{}} \makeatother 
\begin{document} 

\title{Bicritical rational maps with a common iterate}
\author{Sarah Koch}
\address{Department of Mathematics, University of Michigan, Ann Arbor, MI 48109,
U.S.A.}
\email{kochsc@umich.edu}
\author{Kathryn Lindsey}
\address{Department of Mathematics, Boston College, Chestnut Hill, MA 02467, U.S.A}
\email{kathryn.lindsey@bc.edu}
\author{Thomas Sharland}
\address{Department of Mathematics and Applied Mathematical Sciences, University of Rhode Island, RI 02881, U.S.A}
\email{tsharland@uri.edu}

\begin{abstract}
Let $f$ be a degree $d$ bicritical rational map with critical point set $\mathcal{C}_f$ and critical value set $\mathcal{V}_f$. Using the group $\Deck(f^k)$ of deck transformations of $f^k$, we show that if $g$ is a bicritical rational map which shares an iterate with $f$ then $\mathcal{C}_f = \mathcal{C}_g$ and $\mathcal{V}_f = \mathcal{V}_g$. Using this, we show that if two bicritical rational maps of even degree $d$ share an iterate then they share a second iterate, and both maps belong to the symmetry locus of degree $d$ bicritical rational maps.
\end{abstract}

\maketitle

\section{Introduction}

For any integers $k,d \geq 2$, the $k$-fold iteration operator, $f \mapsto f^k$, on the set $\textrm{Rat}_d$ consisting of all rational maps of degree $d$, is injective on the complement of a Zariski closed set (\cite{Ye}).  
In this work, motivated by questions about matings of polynomials, we restrict our attention to \emph{bicritical} rational maps.  Our first main theorem is the following. 

\begin{theorem}\label{mthm1} 
Let $f$ and $g$ be distinct bicritical rational maps and suppose there exists $k \in \mathbb{N}$ such that $f^k = g^k$. Then $f$ and $g$ have the same critical points and critical values.
\end{theorem}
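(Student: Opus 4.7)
My plan is to recover $\mathcal{C}_f$ and $\mathcal{V}_f$ intrinsically from the common iterate $h := f^k = g^k$, using the deck transformation group $G := \Deck(h)$. Since $\Deck(f^k) = \Deck(g^k) = G$, any characterization of $\mathcal{C}_f, \mathcal{V}_f$ phrased only in terms of $h$ and $G$ (and not $f$ itself) will equally recover $\mathcal{C}_g, \mathcal{V}_g$, which is exactly what is claimed.

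I would first record the basic structural fact that, for any bicritical degree $d$ rational map $f$, the group $\Deck(f)$ is cyclic of order $d$, sits as a subgroup of $G$, and has $\mathcal{C}_f$ as its common pointwise fixed-point set. So identifying $\mathcal{C}_f$ is equivalent to identifying the subgroup $\Deck(f) \leq G$, or equivalently its fixed locus. In coordinates placing $\mathcal{C}_f = \{0,\infty\}$, the map factors as $f(z) = M(z^d)$ for a Möbius $M$, and $\mathcal{V}_f = \{M(0), M(\infty)\}$.

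The heart of the argument is to identify $\mathcal{V}_f \subseteq \mathcal{V}_h$ using $h$ alone. For $v \in \mathcal{V}_f$, the preimage $f^{-1}(v)$ is a single (ramified) point, so
\[
h^{-1}(v) \;=\; f^{-(k-1)}\bigl(f^{-1}(v)\bigr)
\]
has at most $d^{k-1}$ elements, whereas a generic $h$-fiber has $d^k$. A direct count of $|h^{-1}(w)|$ for the remaining critical values $w = f^{j}(c)$ with $c \in \mathcal{C}_f$ and $j \geq 2$ shows that $\mathcal{V}_f$ is precisely the subset of $\mathcal{V}_h$ whose $h$-fiber attains the minimum size (generically $d^{k-1}$), with every point of this fiber lying in $\mathcal{C}_h$ and having local $h$-degree divisible by $d$. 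Because this characterization is purely in terms of $h$, applying it to the presentation $h = g^k$ gives $\mathcal{V}_g = \mathcal{V}_f$. The set $\mathcal{C}_f$ is then recovered as the critical points of $h$ of maximum local $h$-degree that lie in $h^{-1}(f^{k-1}(\mathcal{V}_f))$; using the $G$-action together with the factorization $f(z) = M(z^d)$ near each point of $\mathcal{C}_f$, this pins $\mathcal{C}_f$ down intrinsically and hence also equals $\mathcal{C}_g$.

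The main obstacle is making the fiber-size and depth characterizations genuinely intrinsic in all cases. Post-critical coincidences — for instance when $\mathcal{V}_f$ meets $\bigcup_{j \geq 2} f^j(\mathcal{C}_f)$, or when the forward orbits of the two critical points of $f$ collide — can cause the naive fiber-size count to degenerate, so that $\mathcal{V}_f$ is no longer the unique minimum in $\mathcal{V}_h$. I expect the delicate part of the argument to be organizing this case analysis using the $G$-action on $\mathcal{C}_h$, which rigidifies the combinatorial ramification structure and should allow one to treat the degenerate configurations uniformly.
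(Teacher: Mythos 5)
Your overall strategy — recover $\mathcal{C}_f$ and $\mathcal{V}_f$ from $h = f^k$ using $\Deck(h)$, so that any such characterization applies equally to $g$ — is precisely the paper's strategy, and your fiber-size criterion for $\mathcal{V}_f$ is essentially the paper's observation that, for $d\geq 3$, $\mathcal{V}_f = \{x : h^{-1}(x) \subseteq \mathcal{C}_h\}$ (Lemma~\ref{l:nonquadraticcandetectcvs}). For degree $d \geq 3$, the paper's route to $\mathcal{C}_f$ is cleaner than yours: since $\Deck(h)$ is cyclic or dihedral (Theorem~\ref{t:cyclicOrDihedral}) and contains an order-$d$ elliptic rotation, $\mathcal{C}_f$ is simply the fixed-point pair of any element of $\Deck(h)$ of order at least $3$; your proposed description of $\mathcal{C}_f$ via $h^{-1}(f^{k-1}(\mathcal{V}_f))$ is circular as written (it mentions $f^{k-1}$, which is not intrinsic to $h$), though it is repairable since $f^{k-1}(\mathcal{V}_f) = h(\mathcal{C}_f)$.

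The genuine gap is in the degenerate quadratic case, which you correctly flag as the obstacle but substantially underestimate. When $d=2$ and $f$ is critically coalescing ($f(v_1)=f(v_2)$), the paper shows $\Deck(h) \cong V_4$ and that the three involutions of $V_4$ fix, respectively, the pairs $\mathcal{C}_f$, $f^{-1}(c_1)$, and $f^{-1}(c_2)$ (Proposition~\ref{p:specialpairs2}). The $V_4$ action alone is symmetric under permuting these three ``special pairs,'' so the group structure cannot distinguish $\mathcal{C}_f$ from the other two; the distinguishing information has to come from the dynamics, not from the $G$-action ``rigidifying the combinatorics.'' Your fiber-size count also fails to separate $\mathcal{V}_f$ from $\{v_1, v_2, f(v_1)\}$ here (this is exactly Lemma~\ref{l:coalescing3pts}). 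The paper resolves these cases by a nontrivial case split on whether the postcritical orbit contains a fixed point, counting critical points of $h$ in each fiber over the postcritical set (Lemma~\ref{l:numberofcritpointsinfiber}), and — for the Latt\`es-type configuration where the critical orbit hits a fixed point after two steps — a cross-ratio computation showing $[v_1:v_2:\alpha:\beta] \in \{-1, 3\pm 2\sqrt 2\}$ (Lemma~\ref{lemma:crossratio}), which then rules out the wrong choices of $\mathcal{V}_g$. That cross-ratio identity is a genuinely new algebraic input, not a reorganization of the case analysis, and nothing in your proposal anticipates needing such an argument. So the proposal captures the right framework and correctly locates the hard part, but the hard part is left unresolved, and the suggested mechanism for resolving it would not work.
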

\noindent As demonstrated in Example \ref{ex:noSharedIterate}, the converse to Theorem \ref{mthm1} does not hold.  

In the even degree case, we show that sharing \emph{any} iterate is equivalent to sharing the second iterate.  

\begin{theorem}\label{mthm2}
Let $f$ and $g$ be distinct bicritical maps which are not power maps and of even degree $d$. If there exists $k$ such that $f^k = g^k$, then $f^2 = g^2$. Furthermore, there exists an involution $\mu$ such $g = \mu \circ f = f \circ \mu$, and so $f$ and $g$ belong to the symmetry locus $\Sigma_d$.
\end{theorem}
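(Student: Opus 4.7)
My plan is to combine Theorem~\ref{mthm1} with the evenness of the degree to present $g$ as $L\circ f$ for an explicit M\"obius $L$, translate the shared-iterate hypothesis into a commutation relation between $L$ and $f^k$, and finally exploit the bicritical non-power structure to identify $L$ as the desired involution.

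By Theorem~\ref{mthm1}, $f$ and $g$ share their critical points $\{c_1,c_2\}$ and critical values $\{v_1,v_2\}$. After a M\"obius conjugation I may assume $\{c_1,c_2\}=\{0,\infty\}$; then both $f$ and $g$ factor through the power map $P(z)=z^d$, so $f = M_f\circ P$ and $g = M_g\circ P$ for M\"obius $M_f, M_g$. Setting $L:=M_g\circ M_f^{-1}$, I have $g = L\circ f$, and $L$ preserves $\{v_1,v_2\}=M_f(\{0,\infty\})=M_g(\{0,\infty\})$ as a set. Expanding $g^k = (L\circ f)^k = L\circ(f\circ L)^{k-1}\circ f$ and using surjectivity of $f$ to cancel on the right of $f^k = g^k$, I obtain $f^{k-1} = L\circ(f\circ L)^{k-1} = g^{k-1}\circ L$, so $g^{k-1} = f^{k-1}\circ L^{-1}$ and
\[
g^k \;=\; g\circ g^{k-1} \;=\; L\circ f\circ f^{k-1}\circ L^{-1} \;=\; L\circ f^k\circ L^{-1}.
\]
The hypothesis $f^k = g^k$ therefore becomes $L\circ f^k = f^k\circ L$.

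The main obstacle is to upgrade this commutation with $f^k$ to commutation with $f$ itself. Here the even-degree hypothesis enters decisively: the unique involution $\iota\colon z\mapsto -z$ in $\Deck(f)=\Deck(g)$ provides factorizations $f = F\circ q$ and $g = G\circ q$ with $q(z) = z^2$ and $F,G$ bicritical of degree $d/2$, again sharing critical points $\{0,\infty\}$ and critical values $\{v_1,v_2\}$. I expect the technical heart of the argument to be a rigidity lemma --- for bicritical non-power $f$, any M\"obius commuting with $f^k$ already commutes with $f$ --- proved by tracking the action on the iterated critical preimages $(f^k)^{-1}(v_i) = f^{-(k-1)}(c_i)$ and using the above factorization through $q$. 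Once this is in hand, the action of the symmetry group $\{\phi\text{ M\"obius}: \phi\circ f = f\circ\phi\}$ on $\{c_1,c_2\}$ is faithful: any such $\phi$ fixing $c_1$ and $c_2$ also fixes $v_i = f(c_i)$, hence fixes all of $\{c_1,c_2,v_1,v_2\}$, and as $f$ is not a power map this set has at least three distinct points, forcing $\phi=\operatorname{id}$. So the symmetry group has order at most $2$; since $L\ne\operatorname{id}$, $L$ must be the unique nontrivial element, an involution that swaps $c_1\leftrightarrow c_2$ and $v_1\leftrightarrow v_2$.

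Setting $\mu := L$ gives an involutive M\"obius with $\mu\circ f = f\circ\mu$, hence $g = \mu\circ f = f\circ\mu$. Finally,
\[
g^2 \;=\; (\mu\circ f)\circ(\mu\circ f) \;=\; \mu\circ(f\circ\mu)\circ f \;=\; \mu\circ(\mu\circ f)\circ f \;=\; \mu^2\circ f^2 \;=\; f^2,
\]
and the nontrivial involutive symmetry $\mu$ places both $f$ and $g$ in the symmetry locus $\Sigma_d$.
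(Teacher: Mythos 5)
Your setup is sound and matches the early part of the paper's argument: invoking Theorem~\ref{mthm1} to get $\mathcal{C}_f=\mathcal{C}_g$, $\mathcal{V}_f=\mathcal{V}_g$, writing $g=L\circ f$ via the factorization through $z\mapsto z^d$ (this is Lemma~\ref{l:criticalpointsgiveMobius}), and deriving $L\circ f^k=f^k\circ L$ from $f^k=g^k$ (this is Lemma~\ref{l:pakovich1}\,(ii)). The problem is what comes next. You reduce everything to a ``rigidity lemma'' --- that for bicritical non-power $f$, any M\"obius commuting with $f^k$ already commutes with $f$, i.e.\ $\mathrm{Aut}(f^k)=\mathrm{Aut}(f)$ --- but you do not prove it; you only gesture at ``tracking iterated critical preimages and using the factorization through $q(z)=z^2$.'' That is the entire content of the theorem, and it cannot be taken for granted. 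Worse, as stated the rigidity claim is \emph{stronger} than Theorem~\ref{mthm2}: the theorem only needs the particular $L=M_g\circ M_f^{-1}$ arising from a shared-iterate pair $(f,g)$ to lie in $\mathrm{Aut}(f)$, whereas your lemma asserts this for \emph{every} element of $\mathrm{Aut}(f^k)$, and it is not at all clear that Theorem~\ref{mthm2} even implies that. So the argument as written is circular at best and possibly appeals to a false statement.

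It is also worth noting that your guess about \emph{where} evenness enters is not how the paper uses it. The factorization $f=F\circ q$ with $q(z)=z^2$ plays no role in the paper's proof. Instead, the paper first shows (Lemma~\ref{l:MobiusInvolution}) that $\mu=L$ is an involution fixing $\mathcal{C}_f$ and $\mathcal{V}_f$ \emph{setwise}, by applying Theorem~\ref{mthm1} a second time to $f\circ\mu$ and to $\mu\circ f\circ\mu$ (which shares the $k$-th iterate with $f$ because $\mu\circ f^k\circ\mu=f^k$). Evenness is then used in Lemma~\ref{l:evendegree} via a fixed-point count: if $\mu$ fixed $\mathcal{V}_f$ pointwise and swapped $\mathcal{C}_f$, one deduces $f^{2k}$ has exactly $d^{2k}+1$ simple fixed points, $\mu$ permutes them with at most two fixed among $\mathcal{V}_f$, and the oddness of $d^{2k}-1$ forces a third fixed point of $\mu$, a contradiction. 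Finally, Theorem~\ref{t:firstcousinsevendegree} applies Lemma~\ref{l:MobiusInvolution} once more to the pair $(f,\mu\circ f\circ\mu)$ to produce a second involution $\nu$ with $f=\nu\circ\mu\circ f\circ\mu$, shows $\nu\mu$ is an involution, and rules out $\nu\ne\mathrm{id}$ by a direct normalization. To repair your proof you would need either to actually prove your rigidity lemma (nontrivial, and possibly false in the generality you state it) or to replace that step with the kind of bootstrap argument the paper uses, re-applying Theorem~\ref{mthm1} to auxiliary maps built from $f$ and $L$.
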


Work of Mike Zieve \cite{ZieveQuadratics} gives a proof of Theorem \ref{mthm2} in the case $d=2$.  The proof technique is markedly different from those used in the present paper.

  \begin{Zieve}[Zieve \cite{ZieveQuadratics}]
  Let $f$ and $g$ be quadratic rational functions with a common iterate, and let $n$ be the least positive integer for which $f^n = g^n$.  If $f$ and $g$ are not power maps and $n > 1$, then $n=2$. 
\end{Zieve}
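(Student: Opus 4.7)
The plan is to combine Theorem~\ref{mthm1} with a Möbius-algebraic analysis. By Theorem~\ref{mthm1}, $\mathcal{C}_f = \mathcal{C}_g$ and $\mathcal{V}_f = \mathcal{V}_g$; after a Möbius change of coordinates, I would assume the common critical set is $\{0,\infty\}$, so that both $f$ and $g$ have deck involution $\sigma(z) = -z$ and factor as $f = F\circ\pi$ and $g = G\circ\pi$ with $\pi(z) = z^2$ and $F, G$ Möbius. Since $g$ is invariant under $\sigma = \sigma_f$, the composition $T := g\circ f^{-1} = G\circ F^{-1}$ is single-valued of degree one, hence a Möbius transformation, and $g = T\circ f$ with $T\neq\mathrm{id}$.

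Next, I would analyze the equation $(Tf)^n = f^n$. Writing $(Tf)^n = T\,(fT)^{n-1}\,f$ and using the associativity identity $T(fT)^{n-1} = (Tf)^{n-1}T = g^{n-1}T$, the hypothesis becomes $g^{n-1}\circ T = f^{n-1}$. Composing on the left with $g$ and using $g^n = f^n$ together with $g = Tf$ yields the commutation $T\circ f^n = f^n\circ T$. The crux is then to show that this specific $T$, coming from a shared-iterate pair with $f$ not a power map, in fact commutes with $f$ itself. Once $Tf = fT$, we have $g^k = T^k\circ f^k$, so $g^n = f^n$ forces $T^n = \mathrm{id}$; minimality of $n$ then makes $T$ have order exactly $n$. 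But the group of Möbius transformations commuting with a non-power bicritical quadratic $f$ has order at most $2$: in our normalization, the only possible non-trivial symmetry is an involution of the form $z\mapsto\beta/z$ swapping the two critical points. Hence $n = 2$.

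The main obstacle is the reduction from $T\circ f^n = f^n\circ T$ to $T\circ f = f\circ T$. A natural approach is to set $N := T\circ f\circ T^{-1}$: then $N^n = T\circ f^n\circ T^{-1} = f^n$, and either $N = f$ (whence $T$ commutes with $f$ directly) or Theorem~\ref{mthm1} applied to the pair $(f, N)$ forces $T$ to preserve both $\mathcal{C}_f = \{0,\infty\}$ and $\mathcal{V}_f$; in the latter case $T$ takes the form $z\mapsto\alpha z$ or $z\mapsto\beta/z$, and a direct case-by-case expansion of the commutation $Tf^n = f^nT$, using that $f = F\circ\pi$ is not a power map, should yield $Tf = fT$. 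This step is delicate because for special $f$ (for instance degree-$2$ Latt\`es maps) the Möbius centralizer of $f^n$ can a priori be strictly larger than that of $f$, so the argument must exploit the additional constraint that $T$ arises as $g\circ f^{-1}$ for a shared-iterate pair rather than as an arbitrary Möbius commuting with $f^n$. Zieve's proof, being \emph{markedly different} from the present paper's techniques, presumably sidesteps this reduction altogether by working algebraically with Galois groups of the function-field extensions $\mathbb{C}(z)/\mathbb{C}(f^k(z))$.
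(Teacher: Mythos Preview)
Your outline is essentially the paper's own route to Theorem~\ref{mthm2} (specialized to $d=2$), not Zieve's; the paper states explicitly that Zieve's argument is ``markedly different'' and does not reproduce it. In particular, your Lemma~\ref{l:criticalpointsgiveMobius}-style factorization $g=T\circ f$, the derivation of $T\circ f^n=f^n\circ T$, and the endgame (once $Tf=fT$, one gets $g^k=T^kf^k$, so $T$ has order $n$, and $|\mathrm{Aut}(f)|\le 2$ for non-power quadratics) are all correct and match the paper's Lemmas~\ref{l:criticalpointsgiveMobius} and~\ref{l:pakovich1} and the conclusion of Theorem~\ref{t:firstcousinsevendegree}.

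The gap you flag---passing from $T\in\mathrm{Aut}(f^n)$ to $T\in\mathrm{Aut}(f)$---is genuine, and your sketch does not close it. Knowing only that $T$ preserves $\mathcal{C}_f=\{0,\infty\}$ and $\mathcal{V}_f$ still leaves the case $T(z)=\beta/z$ with $T$ \emph{fixing} $\mathcal{V}_f$ pointwise (forcing $v_2=-v_1$, $\beta=v_1^2$); here $T$ need not commute with $f$ and no ``direct expansion'' of $Tf^n=f^nT$ settles it. The paper handles exactly this via three ingredients you are missing: (i) it first shows $T$ is an \emph{involution} preserving both $\mathcal{C}_f$ and $\mathcal{V}_f$ (Lemma~\ref{l:MobiusInvolution}, which already uses Theorem~\ref{mthm1} a second time on the pair $f,\,f\circ T$); (ii) it then uses an even-degree fixed-point count (Lemma~\ref{l:evendegree}): $T$ permutes the $d^{2k}+1$ fixed points of $f^{2k}$, and an involution fixing both critical values would have to fix a third point, forcing $T=\mathrm{id}$---this is where evenness of $d$ enters and is precisely what fails in Example~\ref{ex:oddDegree4thIterate}; (iii) a bootstrap with a second involution $\nu$ in the proof of Theorem~\ref{t:firstcousinsevendegree} then forces $f=\mu f\mu$. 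Your ``case-by-case expansion'' would need to reproduce the parity argument of (ii), which is the substantive idea.
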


Additional work in progress by Luallen and Zieve (\cite{ZieveLuallen}) gives alternative proofs of Theorems \ref{mthm1} and \ref{mthm2}. Their work has also obtained a number results on rational functions which share a common iterate.

To prove Theorems \ref{mthm1} and \ref{mthm2}, we consider two different groups of ``symmetries'' of a rational map $f$.  First, the well-known \emph{symmetry group} or \emph{automorphism group} of a rational map $f$, $\textrm{Aut}(f)$, is the group of all M\"{o}bius transformations $\tau$ that commute with $f$.  The degree $d$ \emph{symmetry locus} is the set of degree $d$ bicritical rational maps $f$ such that $\textrm{Aut}(f)$ is nontrivial.  As shown in \cite{MilnorBicritical}, when $d$ is odd, the symmetry locus is a reducible variety, splitting into two ``halves'' with different dynamical behaviors, while when $d$ is even, the symmetry locus is irreducible.  Second, the group that we call the \emph{deck group} of a rational map $f$, $\textrm{Deck}(f)$, consists of all M\"{o}bius transformations $\tau$ such that $\tau \circ f = f$.  The groups $\textrm{Aut}(f)$, $\textrm{Deck}(f)$, as well as other groups of symmetries, are studied by Pakovich in \cite{Sym}. In particular, for a general rational map $f$, Pakovich considers the groups
\[ 
 \mathrm{Aut}_\infty(f) = \bigcup_{k=0}^\infty \mathrm{Aut}(f^k) \quad \text{and} \quad  \Deck_\infty(f) = \bigcup_{k=0}^\infty \Deck(f^k).
\]
and shows that, except for when $f$ is a power map, these groups are finite. Furthermore, he provides methods which allow an explicit description of the groups in a number of cases.

We prove the following characterization of deck groups of iterates of bicritical rational maps. 

\begin{theorem} \label{t:cyclicOrDihedral}
Let $f$ be a bicritical rational map and $k \in \mathbb{N}$.  Then $\textrm{Deck}(f^k)$ is either cyclic or dihedral. Furthermore, if the degree of $f$ is odd, then $\Deck(f^k)$ is cyclic.
\end{theorem}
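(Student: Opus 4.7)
The strategy is to pin down $\Deck(f)$ via an explicit normal form, then combine two structural observations with the classification of finite subgroups of $\mathrm{PSL}_2(\mathbb{C})$.

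First I would conjugate so that the critical points of $f$ lie at $0$ and $\infty$. Since $f$ is bicritical of degree $d$, each critical point has local degree $d$, so $f$ is totally ramified over its two critical values $v_1 = f(0)$ and $v_2 = f(\infty)$, which are distinct (else the preimage count exceeds $d$). Post-composing with a M\"obius transformation sending $v_1 \mapsto 0$ and $v_2 \mapsto \infty$ (an operation that leaves $\Deck$ invariant) yields a rational function with a unique $d$-fold zero at $0$ and a unique $d$-fold pole at $\infty$, necessarily of the form $c\, z^d$. Thus $f = A \circ m$, where $m(z) = z^d$ and $A$ is M\"obius. Solving $f \circ \tau = f$ for a M\"obius $\tau$ then reduces to $\tau(z)^d = z^d$, giving
\[
\Deck(f) \;=\; \{\, z \mapsto \zeta z : \zeta^d = 1 \,\} \;\cong\; \mathbb{Z}/d,
\]
cyclic of order exactly $d$.

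Two structural observations follow. First, $\Deck(f) \subseteq \Deck(f^k)$: if $\sigma \in \Deck(f)$ then $f^k \circ \sigma = f^{k-1} \circ (f \circ \sigma) = f^k$, so $\Deck(f^k)$ always contains a cyclic subgroup of order $d$. Second, $|\Deck(f^k)|$ divides $d^k$: any non-identity $\tau \in \Deck(f^k)$ fixing a point at which $f^k$ is locally biholomorphic must equal the identity on a neighborhood (by local uniqueness of deck lifts) and hence is identically the identity; so $\Deck(f^k)$ acts freely on any generic fiber of $f^k$, a set of size $d^k$.

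The finish is a case check against the classification of finite subgroups of $\mathrm{PSL}_2(\mathbb{C})$---cyclic, dihedral, $A_4$, $S_4$, $A_5$. If $d$ is odd, then $d^k$ is odd and $|\Deck(f^k)|$ is odd, forcing $\Deck(f^k)$ to be cyclic (dihedral groups and each of $A_4, S_4, A_5$ have even order); this gives the second assertion. For the first, when $d \ge 6$ the inclusion $\mathbb{Z}/d \subseteq \Deck(f^k)$ provides a cyclic subgroup of order exceeding the largest cyclic subgroup orders $3, 4, 5$ in $A_4, S_4, A_5$, excluding these. For $d \in \{2,3,4,5\}$, $|\Deck(f^k)|$ must be a power of a single prime, contradicting the orders $12, 24, 60$ of the exceptional groups. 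The main obstacle is the first step---securing the normal form $f = A \circ z^d$ and thereby the precise identification of $\Deck(f)$---after which the rest reduces to elementary group theory against the M\"obius classification.
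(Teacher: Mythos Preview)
Your proof is correct and takes a genuinely different route from the paper. Both arguments start from the classification of finite subgroups of $\mathrm{PSL}_2(\mathbb{C})$ and the inclusion $\mathbb{Z}/d \cong \Deck(f) \subseteq \Deck(f^k)$, which immediately handles $d \ge 6$. For small $d$ and for the odd-degree claim, however, the paper proceeds via a combinatorial ``degree partition'' argument (Proposition~\ref{p:primeorders}): for any prime $p \nmid d$, elements of order $p$ in $\Deck(f^k)$ are excluded by counting local degrees in fibers, and a separate Orbit--Stabilizer argument on the critical set (Lemma~\ref{l:notA_4A_5}) is needed to rule out $A_4$ and $A_5$ since $S_4$ has an index-$2$ subgroup. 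Your approach replaces all of this with the single divisibility $|\Deck(f^k)| \mid d^k$, obtained from the free action on a generic fiber; since $d \in \{2,3,4,5\}$ are prime powers, $|\Deck(f^k)|$ is a prime power and cannot equal $12$, $24$, or $60$, while for $d$ odd the order is odd, forcing cyclic. This is more elementary and unified. The paper's Proposition~\ref{p:primeorders} does yield a slightly sharper intermediate statement (no element of order $p$ for any $p \nmid d$, not merely for primes dividing the group order), but for the theorem as stated your argument is cleaner.
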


If $f$ is a bicritical rational map of degree $d$, $\textrm{Deck}(f)$ contains the order-$d$ elliptic rotation around the axis in hyperbolic $3$-space whose endpoints are the critical points of $f$. Our strategy for detecting the critical points and values of $f$ from the map $f^k$ is to exploit the group structure of $\textrm{Deck}(f)$ guaranteed by Theorem \ref{t:cyclicOrDihedral} and to distinguish the critical points of $f$ from the set of all points in $\hat{\mathbb{C}}$ fixed by some nonidentity element of $\textrm{Deck}(f)$.  

It is perhaps surprising that the proof of this statement is much harder in the degree $2$ case than in the seemingly more general case for bicritical maps of degree $d \geq 3$.  We obtain the following characterization of $\Deck(f^k)$ in the case that $f$ quadratic. 

\begin{theorem}\label{mthm3}
If $f$ is a quadratic rational map, then the possibilities for $\Deck(f^k)$ (up to isomorphism) are $\mathbb{Z}_{2^n}$ for  $n \geq 1$, $V_4$ or $D_8$, the set of symmetries of a square. Furthermore, if $f$ is not a power map then $|\Deck(f^k)| \leq 8$.
\end{theorem}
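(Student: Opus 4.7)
My plan combines Theorem~\ref{t:cyclicOrDihedral} with the standard divisibility $|\Deck(g)| \mid \deg(g)$ and a functional-equation analysis of the ascending tower $\Deck(f) \subset \Deck(f^2) \subset \cdots$. Since any rational $g$ factors through $\hat{\mathbb{C}} / \Deck(g)$, one has $|\Deck(g)| \mid \deg(g)$; applying this with $g = f^k$ gives $|\Deck(f^k)|$ a power of $2$, and combined with Theorem~\ref{t:cyclicOrDihedral} this forces $\Deck(f^k) \cong \mathbb{Z}_{2^n}$ or $D_{2^n}$ for some $n \geq 1$. Using $D_4 \cong V_4$, what remains is to rule out $D_{2^n}$ for $n \geq 4$ in general, and to rule out $\mathbb{Z}_{2^n}$ for $n \geq 4$ when $f$ is not a power map.

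I would then set up the coset structure of the chain. After M\"obius conjugation placing the critical points of $f$ at $0$ and $\infty$, one has $\Deck(f) = \{\mathrm{id}, \sigma\}$ with $\sigma(z) = -z$ (the unique nontrivial M\"obius involution fixing $\{0, \infty\}$). For $\tau \in \Deck(f^{k+1})$, the identity $f \circ (f^k \circ \tau) = f \circ f^k$ forces $f^k \circ \tau$ to equal either $f^k$ or $\sigma \circ f^k$ at each point, and irreducibility promotes this to a global identity. Hence
\[
\Deck(f^{k+1}) = \Deck(f^k) \sqcup A_{k+1}, \qquad A_{k+1} := \{\tau \in \mathrm{M\ddot{o}b} : f^k \circ \tau = \sigma \circ f^k\},
\]
and $A_{k+1}$ is either empty or a coset of $\Deck(f^k)$, so the group stabilizes or doubles at each step.

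The main step is a functional-equation analysis of first-occurring elements. If $\tau \in A_k$ is a rotation (fixes $\{0, \infty\}$, so after conjugation $\tau(z) = \zeta z$ for a primitive $2^r$-th root of unity $\zeta$), the equation $f^{k-1}(\zeta z) = -f^{k-1}(z)$ forces the factorization $f^{k-1}(z) = z^{2^{r-1}} H(z^{2^r})$ for some rational $H$. Matching $2$-adic valuations of the numerator and denominator degrees of this factorization against $v_2(\deg f^{k-1}) = k-1$ yields two scenarios: either $H$ is constant --- whence $f^{k-1} = cz^{2^{r-1}}$ is a pure power map, so $f$ must send $\{0, \infty\}$ to itself and is itself a power map --- or the denominator of $H$ dominates, imposing further dynamical constraints on $f$. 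An analogous analysis for new involutions $\eta \in A_k$ swapping $\{0, \infty\}$ (so $\eta(z) = c/z$), combined with the rotational constraints, caps the dihedral order at $D_8$.

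The main obstacle is the ``denominator-dominant'' scenario above, together with preperiodic critical dynamics: if $0$ or $\infty$ is preperiodic under $f$ without $f$ being a power map, then the local degrees of $f^k$ at $\{0, \infty\}$ grow and the naive degree count does not rule out higher rotational order. I expect to handle this by a finite case analysis on the combinatorial dynamics of $\{f(0), f(\infty)\}$ relative to $\{0, \infty\}$, together with the rigidity constraint that $f^{k-1}$ actually arises as an iterate of the degree-$2$ map $f$, forcing $|\Deck(f^k)| \leq 8$ in the non-power case.
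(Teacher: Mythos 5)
Your route is genuinely different from the paper's. The paper derives Theorem~\ref{mthm3} as a corollary of the full classification in Theorem~\ref{t:DetectQuadraticCritPts}: it shows that $\Deck(f^k)$ is cyclic if and only if $f$ is not critically coalescing (and then of order $>2$ only for power maps, Lemma~\ref{l:powermaps}); it shows $\Deck(f^k)\cong V_4$ precisely for critically coalescing $f$ (Proposition~\ref{p:characterizecritcoal}); and it then normalizes critically coalescing maps to the one-parameter family $f_a(z)=\frac{z^2-a}{z^2+a}$ and applies Pakovich's theorem (Proposition~\ref{p:Pakovich}) to show $\Deck(f_a^k)\cong V_4$ except at $a=\pm 1$, where one gets $D_8$. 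Your approach instead works with the homomorphism $\Deck(f^{k+1})\to\Deck(f)$ given by $\tau\mapsto\mu$ where $f^k\circ\tau=\mu\circ f^k$; its kernel is $\Deck(f^k)$, so the tower doubles or stabilizes at each step (this part is correct, and parallels the commutative-diagram bookkeeping surrounding Lemma~\ref{Tomlemma} in the paper). You then try to bound the rotational part via the functional equation $f^{k-1}(\zeta z)=-f^{k-1}(z)$ and the resulting factorization $f^{k-1}(z)=z^{2^{r-1}}H(z^{2^r})$, which you derive correctly.

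However, there is a genuine gap, and you acknowledge it yourself: the ``denominator-dominant'' branch of the degree calculation is not ruled out, and the ``finite case analysis on the combinatorial dynamics'' that you say you ``expect'' to carry out is where the real content of the bound lies. Concretely, when $\deg Q(z^{2^r})>2^{r-1}+\deg P(z^{2^r})$, the total degree $\max$ is supplied by the denominator, and it can be an arbitrary power of two; a naive $2$-adic match of degrees does not produce a contradiction. One would need to exploit that $f^{k-1}(0)=f^{k-1}(\infty)=0$ in that scenario (forcing $f$ to be postcritically finite with both critical orbits landing on the cycle through $0$), and then show that no such $f$ of degree two supports an order-$8$ rotation in some $\Deck(f^k)$ unless $f$ is a power map. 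You would also have to explain why an order-$4$ rotation \emph{is} compatible with a non-power map (the map $z\mapsto\frac{z^2-1}{z^2+1}$ realizes $D_8$), so whatever argument you produce must degenerate exactly once. None of this is in your sketch, so as written the proposal does not establish the theorem; it reduces it to a still-open finite analysis, where the paper instead closes it with the normal form $f_a$ and Pakovich's result.

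One small side remark: your ancillary claim $|\Deck(g)|\mid\deg(g)$ (via free action on a regular fiber) is correct and not stated in the paper, but it is redundant in your argument, since the index-$\leq 2$ coset structure of the tower already forces $|\Deck(f^k)|$ to be a power of two.
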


The original motivation for this study was to understand and clarify the observation communicated to the authors by John Hubbard that the quadratic symmetry locus $\Sigma_2$ contains rational maps that can be viewed as variants of matings of quadratic polynomials in which the dynamics swap which ``hemisphere'' a point belongs to. While sequels will explore this topic in greater detail, we offer the following provisional definition.  

\begin{definition} \label{def:mixedmating}
Let $F$ be a rational map of degree $d$ and suppose there exist postcritically finite degree $d$ polynomials $f$ and $g$ such that
\begin{enumerate}
\item $F^2 = (f \mate g)^2$, and
\item $F \neq f \mate g$,
\end{enumerate}
where  $f \mate g$ denotes a rational map that is a geometric mating of $f$ and $g$. 
Then we say $F$ is a \emph{mixing} of $f$ and $g$ and write $F = f \mix g$.
\end{definition}

An immediate consequence of Theorem \ref{mthm1} is that a mixing $f \mix g$ of $f$ and $g$ and the corresponding geometric mating $f \mate g$ have the same critical points and critical values.  
Theorem \ref{mthm2} implies that, in the even degree case, replacing the second iterates in Definition \ref{def:mixedmating} with $k$th iterates, for $k \geq 2$, does not introduce any additional generality.  Furthermore, it implies that if $f$ and $g$ have even degree and both geometric and mixed matings of $f$ and $g$ exist, then these matings live in the symmetry locus $\Sigma_d$.   Conceptually related constructions or definitions include Timorin's work on regluings (\cite{TimorinRegluing}), twisted matings (\cite{TwistedMatings}), Meyer's antiequators \cite{MeyerUnmating}, and work in progress by Jung on quadratic anti-matings (\cite{JungAntiMatings}).

\begin{figure}[ht]
\includegraphics[width=0.6\textwidth]{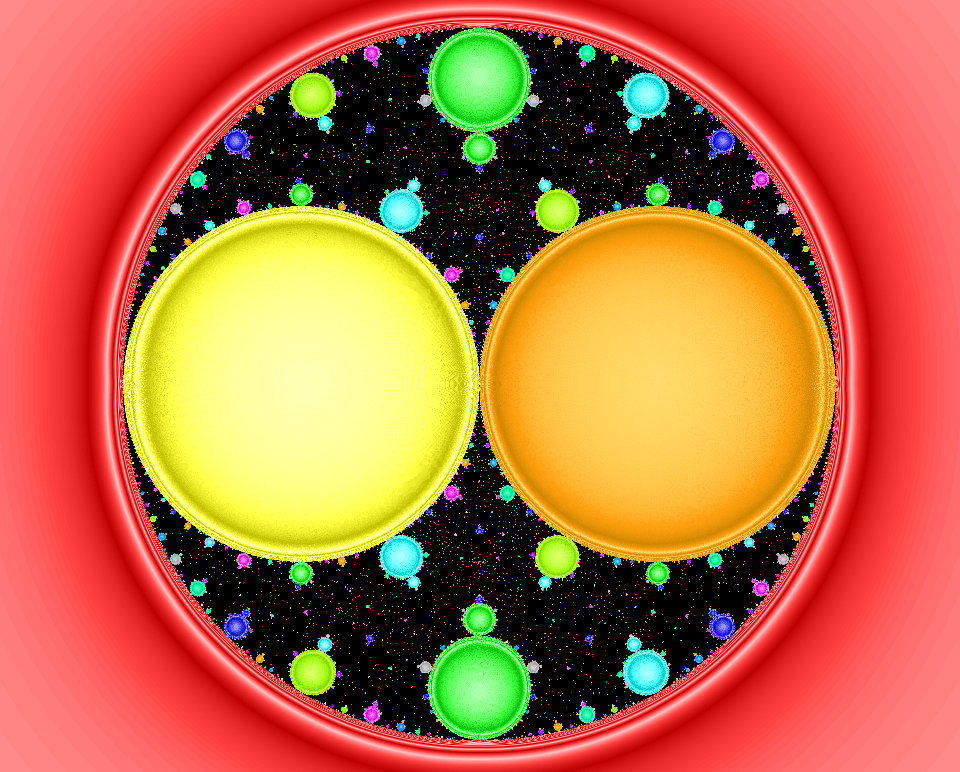}
\caption{The symmetry locus $\Sigma_2$, up to conformal conjugacy, as parameterized by $ f_c(z) = c \left( z + \frac{1}{z} \right)$ for  $c \in \mathbb{C} - \{0\}$. 
Note the space has $V_4$ symmetry. One can easily check that $f^2_c = f^2_{-c}$ and, setting $\mu(z) = -z$,  $f_{-c} = \mu \circ f_c = f_c \circ \mu$. } 
\label{f:Sym2}
\end{figure} 

The paper is organized as follows. In Section~\ref{s:deckrational}, we give some preliminary results about $\Deck(f)$ for general rational maps. In the following section, we then restrict our attention to $\Deck(f)$ where $f$ is bicritical. In Section~\ref{s:higherdegree}, we prove Theorem~\ref{mthm1} for degree $d \geq 3$. We then turn our attention to iterates of quadratic rational maps, and in Section~\ref{s:Deckiteratesquads} we undertake a deeper analysis of the possibilities for $\Deck(f^k)$ when $f$ is quadratic. This allows us, in Sections~\ref{s:DetectCritnoncritcoal} and~\ref{s:Detectcritcoal}, to prove Theorem~\ref{mthm1} for quadratics. The proof of Theorem~\ref{mthm2} is given in Section~\ref{s:sharediterates}.
Finally, in the Appendix, we revisit the space $\Sigma_2$ and present some conjectural and computational observations about matings and mixings.  

\subsection*{Acknowledgments} The authors wish to thank Xavier Buff, Eriko Hironaka, John Hubbard, Wolf Jung, Curt McMullen, Daniel Meyer and Mike Zieve for helpful conversations during the preparation of this paper. The images in the article were created with \emph{Dynamics Explorer}, \cite{DynEx}. S. Koch was partially supported by NSF grant \#2104649.  K. Lindsey was partially supported by NSF grant \#1901247.

\section{The deck group of a rational map}\label{s:deckrational}

\begin{definition}
Let $f:\hat{\mathbb{C}} \to \hat{\mathbb{C}}$ be a rational map.  
  The \emph{deck group} of $f$ is
\[
\Deck(f) \coloneqq \{\tau \in \textrm{Rat}_1 \mid f = f \circ \tau\}
\]
and we say that an element $\tau$ of $\Deck(f)$ is a \emph{deck transformation} of $f$.
\end{definition}

W will call elements of the group $\textrm{Aut}(f) \coloneqq \{\tau \in \textrm{Rat}_1 \mid f = \tau^{-1} \circ f \circ \tau\}$ \emph{automorphisms} of $f$. 

We will find the following notation for a fiber useful. 

\begin{definition}
For any rational map $f$ on $\hat{\mathbb{C}}$ and $z \in \hat{\mathbb{C}}$, define the \emph{fiber} of $z$ with respect to $f$ to be the set 
$$\rho_f(z) \coloneqq \{ w \in \hat{\mathbb{C}} \mid f(w) = z\}.$$
\end{definition}

The following Proposition collects some elementary facts about the general deck group of a rational map.

\begin{proposition} \label{p:elementaryfacts}
Let $f$ be a rational map of degree $d \geq 1$.  
\begin{enumerate}
\item \label{i:group} $\textrm{Deck}(f)$ is a group. 
\item \label{i:subgroup} For any $k \in \mathbb{N}$, $Deck(f)$ is a subgroup of $Deck(f^k)$. 
\item Conjugate rational maps have isomorphic deck groups.  
\item \label{i:DeckGroupFixesFibers}
Fibers are preserved by elements of the deck group.  More precisely, for any $\phi \in \textrm{Deck}(f)$ and $z \in \hat{\mathbb{C}}$, 
$$\rho_f(z) = \phi(\rho_f(z)) = \phi^{-1}(\rho_f(z)).$$
\item  \label{i:localdegreespreserved} Local degrees under $f$ are preserved by elements of the deck group. More precisely, denoting by $\textrm{deg}_f(z)$ the local degree with which a point $z$ maps forwards under $f$, we have that 
$$\textrm{deg}_f(z) = \textrm{deg}_f(\phi(z))$$ for all $\phi \in \textrm{Deck}(f)$ and $z \in \hat{\mathbb{C}}$. 
\item \label{i:orderbound} The order of $\textrm{Deck}(f)$ is at most $d$.
\item \label{i:possibilities} $\textrm{Deck}(f)$ is isomorphic to either a cyclic group, a dihedral
group, $A_4$ (the symmetry group of the tetrahedron), $S_4$ (the symmetry group of the octahedron) or $A_5$ (the symmetry group of the icosahedron).
\end{enumerate}
\end{proposition}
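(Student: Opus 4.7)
The plan is to handle items (i)--(v) directly from the defining identity $f \circ \tau = f$, and to reduce the quantitative bound (vi) and the classification (vii) to well-known facts about finite subgroups of $\mathrm{PSL}(2,\mathbb{C})$; I expect (vi) to be the main obstacle. For (i), the identity Möbius map lies in $\Deck(f)$; closure follows from $f\circ (\tau_1\tau_2) = (f\circ\tau_1)\circ \tau_2 = f\circ \tau_2 = f$; and composing $f\circ\tau = f$ on the right with $\tau^{-1}$ yields $f = f\circ \tau^{-1}$. Item (ii) is a one-line induction: $f^k\circ\tau = f^{k-1}\circ(f\circ\tau) = f^k$. For (iii), if $g=\phi\circ f\circ \phi^{-1}$, then conjugation $\tau\mapsto \phi^{-1}\tau\phi$ gives the desired isomorphism. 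Item (iv) is immediate from $f(\tau(w))=f(w)$ combined with bijectivity of $\tau$ on the finite set $\rho_f(z)$, and item (v) follows from the chain rule for local degrees, $\mathrm{deg}_f(w) = \mathrm{deg}_{f\circ\tau}(w) = \mathrm{deg}_\tau(w) \cdot \mathrm{deg}_f(\tau(w))$, using that every Möbius transformation has local degree one.

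For (vi), I would proceed in two steps (the case $d=1$ being trivial). First, I would show $\Deck(f)$ is finite by choosing two distinct regular values $z_1 \neq z_2$ of $f$, which exist since $f$ has only finitely many critical values. Their fibers are disjoint and each contains $d$ elements, so $\rho_f(z_1)\cup\rho_f(z_2)$ is a set of $2d\geq 4$ points preserved setwise by every element of $\Deck(f)$ by (iv). Since a Möbius transformation is determined by its values at three points, the resulting restriction map $\Deck(f) \to \mathrm{Sym}(\rho_f(z_1))\times\mathrm{Sym}(\rho_f(z_2))$ is injective, and $\Deck(f)$ is finite. Second, the finite group $\Deck(f)$ acts on $\hat{\mathbb{C}}$; the quotient $\hat{\mathbb{C}}/\Deck(f)$ is again a sphere (e.g.\ by Riemann--Hurwitz), and the quotient map $\pi$ has degree $|\Deck(f)|$. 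Since $f$ is constant on $\Deck(f)$-orbits, it descends via the universal property to a rational map $\tilde f$ with $f = \tilde f \circ \pi$; comparing degrees gives $|\Deck(f)| \mid d$, hence $|\Deck(f)| \leq d$.

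Finally, (vii) is immediate from (vi) and the classical classification of finite subgroups of $\mathrm{PSL}(2,\mathbb{C})$: every such group is conjugate into $\mathrm{PSU}(2)\cong\mathrm{SO}(3)$ by averaging the spherical metric, and the finite subgroups of $\mathrm{SO}(3)$ are exactly the cyclic, dihedral, $A_4$, $S_4$, and $A_5$ groups. I would cite this classification rather than reprove it.
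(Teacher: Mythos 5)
Your proof is correct. Parts (i)--(v) and (vii) are handled essentially as the paper does: the paper dismisses (i)--(v) as ``immediate from the definitions'' and cites the classical classification of finite M\"obius groups for (vii), which is exactly what you do. The genuine divergence is in (vi). The paper's one-line justification is ``uniqueness of lifts for covering spaces'': restricting $f$ to the unbranched locus gives a connected degree-$d$ covering, deck transformations of a connected covering are determined by their value at one point, and the restriction $\Deck(f)\to\Deck(f|_{\hat{\mathbb C}\setminus f^{-1}(\mathcal V_f)})$ is injective, so $|\Deck(f)|\le d$. You instead establish finiteness by faithfully embedding $\Deck(f)$ into the symmetric group of a pair of regular fibers, then form the Riemann surface quotient $\hat{\mathbb C}/\Deck(f)\cong\hat{\mathbb C}$ and factor $f=\tilde f\circ\pi$; comparing degrees gives $|\Deck(f)|\mid d$. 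Both arguments are sound and in fact both yield the divisibility statement (the covering-space route also gives it, since the deck group of a connected cover acts freely on a fiber), but your quotient argument makes this explicit rather than leaving it implicit. The covering-space proof is shorter; your approach is more self-contained and has the minor virtue of recording that $|\Deck(f)|$ actually divides $d$, a fact the paper never states but which is consistent with, e.g., Lemma~\ref{l:elliptic} producing an order-$d$ element in the bicritical case.
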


\begin{proof}
Conclusions \eqref{i:group}-\eqref{i:localdegreespreserved} are immediate from the definitions. The claim in \eqref{i:orderbound} follows from the uniqueness of lifts for covering spaces. Conclusion \eqref{i:possibilities} then follows from the well-known (see \cite{Klein} for a reprint of the classical reference) fact that every finite group of M\"obius transformations is isomorphic to a cyclic group, a dihedral group, $A_4$, $S_4$, or $A_5$.  
\end{proof}

We will sometimes refer to the groups $A_4$, $S_4$ and $A_5$ as the polyhedral groups. Note that in the above we consider that the Klein \emph{Vierergruppe} $V_4$ is a dihedral group. Examples of rational maps exhibiting each of the possible types of deck groups are constructed in \cite{huetal}, where the term ``half-symmetry'' is used for what we call a deck transformation. 

\section{The deck group of a bicritical rational map is cyclic or dihedral}

We will mainly be concerned with the groups $\Deck(f^k)$, where $f$ is a degree $d$ bicritical rational map. In this section we will show that the groups $\Deck(f^k)$ cannot be polyhedral groups for bicritical maps.

\begin{lemma} \label{l:elliptic}
Let $f$ be a bicritical rational map of degree $d \geq 2$.  Then $\textrm{Deck}(f)$ contains the elliptic M\"obius transformation that is an order $d$-rotation around the axis (geodesic in $\mathbb{H}^3$) connecting the two critical points of $f$. 
\end{lemma}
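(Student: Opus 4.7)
The plan is to reduce $f$ to an explicit normal form and then exhibit the deck transformation by direct calculation. Since the claim is invariant under Möbius conjugacy (Proposition~\ref{p:elementaryfacts}), I would first conjugate so that the two critical points of $f$ are $0$ and $\infty$; under this conjugation the desired elliptic rotation becomes $\tau(z) = \zeta z$ for a primitive $d$-th root of unity $\zeta$, which is precisely the order-$d$ rotation of $\mathbb{H}^3$ about the vertical geodesic connecting $0$ and $\infty$.

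Next I would pin down the shape of $f$. By Riemann--Hurwitz the $2d-2$ units of ramification must be distributed over the two critical points, so each has local degree exactly $d$. In particular the two critical values are distinct, because if they coincided the unique critical value would have $f$-preimage multiplicity $2d > d$. The key observation for the next step is that post-composing $f$ by a Möbius transformation $\phi$ does not alter $\Deck(f)$: the equation $\phi\circ f\circ\tau = \phi\circ f$ is equivalent to $f\circ\tau = f$. Using this freedom I would post-compose so that the critical values are also $\{0,\infty\}$. The restriction $f\colon\mathbb{C}^{*}\to\mathbb{C}^{*}$ is then an unramified degree-$d$ covering, and since $\pi_1(\mathbb{C}^{*})\cong\mathbb{Z}$ admits a unique index-$d$ subgroup this covering is equivalent to $w\mapsto w^{d}$. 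Hence in these coordinates $f(z)=\alpha z^{d}$ for some $\alpha\in\mathbb{C}^{*}$ (the alternative $\alpha z^{-d}$ reduces to this form after one more post-composition by $w\mapsto 1/w$).

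The final step is a one-line check: $f(\tau(z)) = \alpha(\zeta z)^{d} = \alpha z^{d} = f(z)$, so $\tau\in\Deck(f)$ in the normalized coordinates. Undoing the initial Möbius conjugation carries $\tau$ to the order-$d$ elliptic rotation of $\hat{\mathbb{C}}$ around the axis joining the two critical points of the original map, as required.

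The only subtlety I would watch is the bookkeeping around post-composition: it changes $f$ (and even its Möbius conjugacy class), yet it preserves $\Deck(f)$ and leaves the candidate deck element $\tau$, which acts on the \emph{source}, untouched. Beyond clarifying that point, the argument consists of a normalization, a Riemann--Hurwitz count, and an algebraic verification, none of which present a deep obstruction.
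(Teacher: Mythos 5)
Your proof is correct and takes essentially the same approach as the paper: normalize the critical points to $\{0,\infty\}$, recognize that $f$ is then a (post-composed) function of $z^d$, and verify invariance under $z\mapsto \zeta z$ by direct computation. The paper simply cites Milnor's normal form $z\mapsto\frac{\alpha z^d+\beta}{\gamma z^d+\delta}$ rather than re-deriving it, and your covering-space step is a bit circuitous for what it proves---after post-composing to send the critical values to $\{0,\infty\}$ too, you can obtain $f(z)=\alpha z^{\pm d}$ immediately from the fact that $f$ is a rational map whose only zero and only pole are at $0$ and $\infty$, each of order $d$.
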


\begin{proof}
As Milnor observes in \cite{MilnorBicritical},  we can conjugate $f$ by some M\"obius transformation $\phi$ that sends the critical points of $f$ to $0$ and $\infty$; then  $\phi^{-1} \circ f \circ \phi$ has the form $$z \mapsto \frac{\alpha z^d+\beta}{\gamma z^d+\delta}$$ for some $\alpha,\beta,\gamma,\delta \in \mathbb{C}$.  Any map of this form is invariant under composition with the elliptic rotation $R(z) = ze^{2\pi i / d}$, i.e. 
\[
\phi^{-1} \circ f \circ \phi(z) = \phi^{-1} \circ f \circ \phi(R_n( z)).
\]
\end{proof}

\begin{corollary} \label{c:degreeatleast6}
Let $f$ be a bicritical rational map of degree $d \geq 6$.  Then for any $k \in \mathbb{N}$, $\textrm{Deck}(f^k)$ is either cyclic or dihedral.
\end{corollary}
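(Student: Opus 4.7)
The plan is to combine Lemma~\ref{l:elliptic}, the containment $\Deck(f) \subseteq \Deck(f^k)$ from Proposition~\ref{p:elementaryfacts}\eqref{i:subgroup}, and the classification of finite Möbius groups in Proposition~\ref{p:elementaryfacts}\eqref{i:possibilities}. Concretely, Lemma~\ref{l:elliptic} supplies an order-$d$ elliptic element $\rho \in \Deck(f)$, and by Proposition~\ref{p:elementaryfacts}\eqref{i:subgroup}, this same $\rho$ lies in $\Deck(f^k)$. So $\Deck(f^k)$ is a finite subgroup of $\mathrm{Rat}_1$ containing an element of order $d \geq 6$.

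By Proposition~\ref{p:elementaryfacts}\eqref{i:possibilities}, $\Deck(f^k)$ is isomorphic to a cyclic group, a dihedral group, $A_4$, $S_4$, or $A_5$. To conclude, I would simply recall that the largest order of any element of $A_4$ is $3$ (elements have orders $1,2,3$), the largest in $S_4$ is $4$ (orders $1,2,3,4$), and the largest in $A_5$ is $5$ (orders $1,2,3,5$). Since none of these accommodates an element of order $\geq 6$, the polyhedral possibilities are excluded, leaving only the cyclic and dihedral cases.

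There is really no serious obstacle here: the whole argument is a one-line consequence of Lemma~\ref{l:elliptic} plus an elementary fact from the classification of finite rotation groups. The only thing worth being careful about is making sure that one uses $\Deck(f) \subseteq \Deck(f^k)$ and not the reverse containment, so that the order-$d$ rotation is actually available inside $\Deck(f^k)$; after that, ruling out $A_4$, $S_4$, $A_5$ is immediate from the element-order bounds $3,4,5$ respectively.
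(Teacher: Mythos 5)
Your proof is correct and follows essentially the same path as the paper's: invoke Lemma~\ref{l:elliptic} to get an order-$d$ element of $\Deck(f)\subseteq\Deck(f^k)$, then rule out $A_4$, $S_4$, $A_5$ via the maximal element orders $3,4,5$ using Proposition~\ref{p:elementaryfacts}\eqref{i:possibilities}. Nothing to add.
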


\begin{proof} 
By Lemma \ref{l:elliptic}, $\textrm{Deck}(f)$ contains an element of order $d$.  Since $A_4$, $S_4$ and $A_5$ do not have any element of order $\geq 6$, the claim follows from Proposition \ref{p:elementaryfacts} part \eqref{i:possibilities}. 
\end{proof}

We now turn our attention to the case where the degree is less than or equal to $5$.

\begin{lemma} \label{l:notA_4A_5}
 Let $f$ be a bicritical rational map.  Then neither $\textrm{Aut}(f^k)$ nor $\textrm{Deck}(f^k)$ is  isomorphic to $A_4$ or $A_5$. 
\end{lemma}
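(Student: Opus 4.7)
The plan is to treat the four possibilities --- $(\textrm{Aut}(f^k) \text{ or } \textrm{Deck}(f^k)) \cong (A_4 \text{ or } A_5)$ --- separately, handling the deck cases by element orders plus a divisibility bound, then reducing the automorphism cases to the deck cases via normalization.

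For $\textrm{Deck}(f^k)$, Proposition~\ref{p:elementaryfacts} part \eqref{i:subgroup} combined with Lemma~\ref{l:elliptic} forces $\textrm{Deck}(f^k)$ to contain an element of order $d$. Since the element orders in $A_4$ are $\{1,2,3\}$ and in $A_5$ are $\{1,2,3,5\}$, the only remaining cases (after Corollary~\ref{c:degreeatleast6}) are $\textrm{Deck}(f^k) \cong A_4$ with $d \in \{2,3\}$ and $\textrm{Deck}(f^k) \cong A_5$ with $d \in \{2,3,5\}$. For these I would apply the following divisibility bound: any non-identity M\"obius transformation fixes at most two points on $\hat{\mathbb{C}}$, so $\textrm{Deck}(f^k)$ acts freely on any generic fiber $\rho_{f^k}(z)$, which has $d^k$ elements; hence $|\textrm{Deck}(f^k)|$ divides $d^k$. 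But $|A_4|=12$ divides no pure power of $2$ or of $3$, and $|A_5|=60$ divides no pure power of $2$, of $3$, or of $5$, ruling out every remaining case.

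For $\textrm{Aut}(f^k)$, the key observation is that $\textrm{Aut}(f^k)$ normalizes $\textrm{Deck}(f^k)$ in the M\"obius group, verified by $f^k \circ (\phi \sigma \phi^{-1}) = \phi \circ f^k \circ \sigma \circ \phi^{-1} = f^k$ for $\phi \in \textrm{Aut}(f^k)$ and $\sigma \in \textrm{Deck}(f^k)$. Setting $H := \textrm{Deck}(f^k)$, the product $G := H \cdot \textrm{Aut}(f^k)$ is a finite M\"obius subgroup containing $\textrm{Aut}(f^k)$, with $H \triangleleft G$. If $\textrm{Aut}(f^k) \cong A_5$, then $G$ is a finite M\"obius subgroup containing $A_5$, so $G = A_5$ by maximality of $A_5$ among finite M\"obius subgroups; then $H \triangleleft A_5$ and nontriviality of $H$ (which contains the order-$d$ rotation of Lemma~\ref{l:elliptic}) together with simplicity of $A_5$ force $H = A_5$, contradicting the deck case. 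If $\textrm{Aut}(f^k) \cong A_4$, then $G \in \{A_4, S_4, A_5\}$; using that $H \cap A_4 \triangleleft A_4$, hence $H \cap A_4 \in \{1, V_4, A_4\}$, a case analysis reduces every possibility to a ruled-out case (for instance, $H = S_4$ is excluded by the same divisibility bound, as $|S_4|=24$ divides no pure power of $2$ or of $3$), with one exception: $d = 2$, $G = A_4$, $H \cong V_4$.

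The main obstacle is ruling out this last sub-case, since it is consistent with both the element-order constraint and the divisibility bound. I would attack it by exploiting that the $A_4$-orbit of $c_1$ in $\hat{\mathbb{C}}$ has size $6$ (the tetrahedron edge-midpoints), lies entirely in $\mathcal{C}_{f^k}$ with constant local degree (since $\textrm{Aut}(f^k)$ preserves $\mathcal{C}_{f^k}$ and preserves $\deg_{f^k}$), and must be mapped into itself bijectively by $f^k$ (since $\textrm{Aut}(f^k)$ commutes with $f^k$ and $A_4$ admits no orbit of size strictly between $1$ and $6$). Combining this with the bicritical structure of $f$ --- only $c_1,c_2$ among these six points are critical for $f$ itself, so the other four must be strict iterated preimages of $\{c_1,c_2\}$ --- should yield a combinatorial contradiction on the $f$-orbits of the six points.
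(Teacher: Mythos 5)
Your approach is genuinely different from the paper's, which treats $\textrm{Aut}(f^k)$ and $\textrm{Deck}(f^k)$ uniformly: the group permutes the $2$-element set $\mathcal{C}_f$, so by Orbit--Stabilizer it either fixes both critical points pointwise (hence is cyclic) or has a subgroup of index $2$, which neither $A_4$ nor $A_5$ possesses. Your deck-group argument via the divisibility $|\textrm{Deck}(f^k)| \mid d^k$ --- obtained from the free action on a generic fiber --- combined with the presence of an order-$d$ element is a sound and attractive alternative (it also excludes $S_4$), and the normalizer reduction of the $\textrm{Aut}$ case to the $\textrm{Deck}$ case is a reasonable strategy.

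There is, however, a genuine gap in the final sub-case $d=2$, $G = A_4$, $H \cong V_4$, which you rightly flag as the main obstacle. The assertion that ``$A_4$ admits no orbit of size strictly between $1$ and $6$'' is false: the $A_4$-action on $\hat{\mathbb{C}}$ has two special orbits of size $4$ (tetrahedral vertices and face centers) in addition to the size-$6$ orbit of edge midpoints. So $f^k$ need not restrict to a bijection of the $6$-point orbit $O$ of $c_1$; a priori $f^k(O)$ could be a $4$-point orbit, and the promised combinatorial contradiction is only sketched. A correct finish close to your sketch does exist: $\textrm{Deck}(f^k) \cong V_4$ forces $f$ to be critically coalescing (Proposition~\ref{p:characterizecritcoal}, whose proof does not depend on this lemma), so $O = \mathcal{C}_f \cup f^{-1}(c_1) \cup f^{-1}(c_2)$ by Proposition~\ref{p:specialpairs2}, and critical coalescing then gives $|f^k(O)| \leq 3$ for $k \geq 2$; since $f^k(O)$ is $\textrm{Aut}(f^k)$-invariant and every nonempty $A_4$-invariant subset of $\hat{\mathbb{C}}$ has size at least $4$, this is a contradiction. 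As written, though, this sub-case is unresolved and the proof is incomplete. (A cosmetic point: invoking the divisibility bound to rule out ``$H = S_4$'' inside the $\textrm{Aut}$ analysis is superfluous, since the deck-group step already shows $H$ is not $S_4$.)
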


\begin{proof}
Let $G$ be either of the finite groups $\textrm{Aut}(f)$ or  $\textrm{Deck}(f)$, and consider $\tau \in G$.    Then $\tau$ fixes (set-wise) the set $C \coloneqq \{c_1,c_2\}$ of critical points of $f$.  Hence, every element of $G$ fixes the set $\{c_1,c_2\}$.  Hence, there are two possibilities for the orbit $G(c_1)$ of $c_1$: either $G(c_1) = \{c_1\}$ or $G(c_1) = \{c_1,c_2\}$. 

Case 1: $G(c_1) = \{c_1\}$. Since M\"obius transformations are injective and $G(c_2) \subset \{c_1,c_2\}$ we must have $G(c_2) = \{c_2\}$.  Hence every element of $G$ fixes both $c_1$ and $c_2$ as points.  The only M\"obius transformations that have finite order are elliptic, and $G$ is a finite group.   Thus $G$ is a finite group of elliptic rotations around the axis with endpoints $c_1$ and $c_2$, i.e. $G$ is cyclic.  

  Case 2: $G(c_1) = \{c_1,c_2\}$.  Then from the Orbit-Stabilizer Theorem, we have  
$$2 = |G(c_1) |= \frac{|G|}{|\textrm{Stab}_G(c_1)|}.$$
Neither $A_4$ nor $A_5$ has a subgroup of index $2$ ($|A_4| = 12$ and $A_4$ has no subgroups of order $6$; $|A_5| = 60$ and $|A_5|$ has no subgroups of order $30$).  
\end{proof}

\begin{remark}
 Case 2 of the Proof of Lemma \ref{l:notA_4A_5} does not work for $S_4$ because $S_4$ has an index $2$ subgroup (namely, $A_4$). 
\end{remark}
 
\begin{remark} For any finite subgroup $\Gamma$ of $\textrm{Rat}_1$, Doyle and McMullen (\cite{SolvingQuintic}, Section 5) gave a recipe for constructing rational maps with $\Gamma \subseteq \mathrm{Aut}(f)$.
\end{remark}

\begin{definition} \label{d:degreepartition}
Let $f$ be a bicritical rational map of degree $d$ and let $k$ be a natural number.  The \emph{degree partition} for a point $z \in \hat{\mathbb{C}}$ with respect to $f^k$ is the ordered list of integers $\{a_{i,f^k}(z)\}_{i=0}^{k}$ where $a_{i,f^k}(z)$ is the number of points in the fiber $\rho_{f^k}(z)$ that map forward under $f$ with local degree $d^i$. 
\end{definition}

The following lemma is immediate from the definitions. 

\begin{lemma} \label{l:permutation}
Let $f$ be a bicritical rational map,  $z \in \mathbb{C}$, $k \in \mathbb{N}$, and  $\tau$ of $\textrm{Deck}(f^k)$.  
Then for each nonzero element $a_{i,f^k}(z)$ of the degree partition,  $\tau$ acts as a permutation on the set of  points in the fiber $\rho_{f^k}(z)$ that map forward under $f^k$ with local degree $d^i$.
\end{lemma}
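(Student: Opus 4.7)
The plan is to deduce this essentially directly from parts \eqref{i:DeckGroupFixesFibers} and \eqref{i:localdegreespreserved} of Proposition~\ref{p:elementaryfacts}, applied to the map $f^k$ in place of $f$. Indeed, the hypothesis $\tau \in \textrm{Deck}(f^k)$ means $f^k \circ \tau = f^k$, so $\tau$ is a deck transformation of $f^k$ (viewed as a rational map in its own right), and the cited facts hold for \emph{any} rational map and \emph{any} element of its deck group.

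First I would invoke part \eqref{i:DeckGroupFixesFibers} with the map $f^k$: for every $z \in \hat{\mathbb{C}}$, the fiber $\rho_{f^k}(z)$ is set-wise preserved by $\tau$. Next I would invoke part \eqref{i:localdegreespreserved}, again with $f^k$, to conclude that $\deg_{f^k}(w) = \deg_{f^k}(\tau(w))$ for every $w \in \hat{\mathbb{C}}$. Combining these two statements, if $w \in \rho_{f^k}(z)$ has local degree $d^i$ under $f^k$, then $\tau(w) \in \rho_{f^k}(z)$ also has local degree $d^i$ under $f^k$. Thus the subset
\[
S_i(z) \coloneqq \{ w \in \rho_{f^k}(z) \mid \deg_{f^k}(w) = d^i \}
\]
is mapped into itself by $\tau$, and the same argument applied to $\tau^{-1} \in \textrm{Deck}(f^k)$ shows $\tau^{-1}(S_i(z)) \subseteq S_i(z)$; since $S_i(z)$ is finite this forces $\tau|_{S_i(z)}$ to be a bijection, i.e.\ a permutation.

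The only point that deserves a brief justification is that Proposition~\ref{p:elementaryfacts}\eqref{i:localdegreespreserved}, though stated in terms of a general rational map $f$ and its deck group, applies verbatim to $f^k$. This is immediate because the proposition is stated for any rational map of degree $\geq 1$, and $f^k$ is such a map. There is no real obstacle here; the lemma is, as the authors indicate, a direct consequence of the definitions together with the two general facts about deck groups established earlier. No feature specific to bicritical maps is actually used in the argument, apart from the observation that for bicritical $f$ the local degrees of $f^k$ at preimages of $z$ are indeed of the form $d^i$ with $0 \le i \le k$, which is what makes the indexing in Definition~\ref{d:degreepartition} sensible.
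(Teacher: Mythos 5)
Your proof is correct and is exactly the natural fleshing-out of what the paper asserts is ``immediate from the definitions''; the paper provides no explicit proof, and your two-step invocation of Proposition~\ref{p:elementaryfacts} parts~\eqref{i:DeckGroupFixesFibers} and~\eqref{i:localdegreespreserved} applied to $f^k$ is precisely the intended reasoning. (As a minor simplification, once you know $\tau(S_i(z)) \subseteq S_i(z)$, injectivity of the M\"obius map $\tau$ together with finiteness of $S_i(z)$ already gives a bijection, so the separate appeal to $\tau^{-1}$ is not strictly needed.)
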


\begin{lemma} \label{l:notamultiple}
Let $f$ be a bicritical rational map of degree $d$, let $p$ be a prime number that does not divide $d$, and let $k$ be any natural number.  Suppose there exists some element $\tau$ of $\textrm{Deck}(f^k)$ that has order $p$. Then for any point $z \in \hat{\mathbb{C}}$, there exists some element $a_{i,f^k}(z)$ of the degree partition that is not a multiple of $p$. 
\end{lemma}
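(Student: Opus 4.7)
The plan is to reduce this to a single counting identity: the sum of local degrees of $f^k$ over a fiber equals $\deg(f^k) = d^k$. The hypothesis that $f$ is bicritical with both critical points having local degree $d$ under $f$ is what allows the degree partition to capture every preimage in a usable form, because it forces every local degree of $f^k$ to be a power of $d$.

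First I would justify that the indexing set $\{0, 1, \ldots, k\}$ really does exhaust all possible local degrees. By the chain rule, for any $w \in \hat{\mathbb{C}}$,
\[
\deg_{f^k}(w) \;=\; \prod_{j=0}^{k-1} \deg_f(f^j(w)).
\]
Each factor equals $1$ or $d$ (depending on whether $f^j(w)$ lies in $\mathcal{C}_f$), so $\deg_{f^k}(w) = d^i$ where $i$ is the number of indices $0 \le j \le k-1$ for which $f^j(w)$ is critical, and $i \in \{0, 1, \ldots, k\}$.

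Next I would invoke the standard sum-of-local-degrees identity applied to $F = f^k$, namely $\sum_{w \in \rho_{f^k}(z)} \deg_{f^k}(w) = d^k$. Grouping preimages by local degree, this becomes
\[
\sum_{i=0}^{k} a_{i,f^k}(z) \cdot d^i \;=\; d^k.
\]
Now I would argue by contradiction: if every $a_{i,f^k}(z)$ were a multiple of $p$, then $p$ would divide the left-hand side, hence $p \mid d^k$. Since $p$ is prime this forces $p \mid d$, contradicting the hypothesis $p \nmid d$.

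The main obstacle is essentially absent; this is a one-step consequence of the degree formula together with the combinatorial structure of local degrees forced by bicriticality. I note that the hypothesis on the existence of $\tau \in \Deck(f^k)$ of order $p$ is not required for this argument itself; its role is downstream, where Lemma~\ref{l:permutation} says $\tau$ permutes each of the $a_{i,f^k}(z)$-sized sets, and combining this with the present lemma guarantees that on some such set $\tau$ has an orbit of size $1$, i.e.\ a fixed point.
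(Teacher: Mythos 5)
Your proof is correct and follows essentially the same argument as the paper: both rely on the identity $\sum_{i=0}^{k} a_{i,f^k}(z)\, d^i = d^k$ and the observation that if every $a_{i,f^k}(z)$ were divisible by $p$ then $p \mid d^k$, contradicting $p \nmid d$. Your side remark that the hypothesis on $\tau \in \Deck(f^k)$ is not actually used here (its role is downstream in Proposition~\ref{p:primeorders}) is accurate.
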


\begin{proof}
Because both critical points of a bicritical, degree $d$ rational map have local degree $d$, and the total degree of $f^k$ is $d^k$, we immediately have that for any point $z \in \hat{\mathbb{C}}$,
$$d^k = \sum_{i=0}^k a_{i,f^k}(z) \cdot d^i.$$
If every $a_{i,f^k}(z)$ was a multiple of $p$, then the equation above would imply that $d^k$ is a multiple of $p$, which is a contradiction. 
\end{proof}

The following is the key observation.

\begin{proposition} \label{p:primeorders}
Let $f$ be a bicritical rational map of degree $d$, and let $p$ be a prime number that does not divide $d$.  Then for all natural numbers $k$, the deck group $\textrm{Deck}(f^k)$ has no element of order $p$.
\end{proposition}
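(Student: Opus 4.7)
The plan is to argue by contradiction: assume $\tau \in \Deck(f^k)$ has order exactly $p$, and derive infinitely many fixed points of $\tau$ on $\hat{\mathbb{C}}$, which is absurd since a nontrivial M\"obius transformation has exactly two fixed points on $\hat{\mathbb{C}}$.

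For an arbitrary $z \in \hat{\mathbb{C}}$, I would first invoke Lemma~\ref{l:notamultiple} to pick an index $i$ for which $a_{i,f^k}(z)$ is not divisible by $p$, and write $S_i(z)$ for the subset of $\rho_{f^k}(z)$ consisting of the points of local degree $d^i$ (so $|S_i(z)| = a_{i,f^k}(z)$). By Lemma~\ref{l:permutation}, $\tau$ acts as a permutation on $S_i(z)$. Since $\tau$ has prime order $p$, every $\tau$-orbit in $S_i(z)$ has size $1$ or $p$; if every orbit had size $p$, then $|S_i(z)|$ would be a multiple of $p$, contradicting the choice of $i$. Hence at least one orbit is a singleton, producing a fixed point of $\tau$ lying in $\rho_{f^k}(z)$.

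Finally, the fibers $\rho_{f^k}(z)$ are pairwise disjoint as $z$ varies over $\hat{\mathbb{C}}$ (because $f^k$ is a function), so the previous step furnishes a distinct fixed point of $\tau$ for each $z \in \hat{\mathbb{C}}$. As $\hat{\mathbb{C}}$ is infinite while $\tau$ has only two fixed points, we obtain the desired contradiction. I do not expect any serious obstacle: the argument is a straightforward combination of Lemmas~\ref{l:permutation} and~\ref{l:notamultiple} with the elementary observation that every orbit of an order-$p$ element has cardinality $1$ or $p$. The one technicality is verifying that $\tau$ truly preserves the subset $S_i(z)$ rather than merely permuting the full fiber, but this is immediate from parts~\eqref{i:DeckGroupFixesFibers} and~\eqref{i:localdegreespreserved} of Proposition~\ref{p:elementaryfacts} applied to $f^k$: deck transformations of $f^k$ preserve its fibers and preserve local degrees of $f^k$.
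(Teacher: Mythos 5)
Your argument is correct and follows exactly the paper's proof: assume an order-$p$ element exists, use Lemmas~\ref{l:notamultiple} and~\ref{l:permutation} with the orbit-counting argument for prime-order permutations to extract a fixed point in every fiber of $f^k$, and conclude by noting that disjointness of fibers would then give a non-identity M\"obius transformation too many fixed points. The one cosmetic difference is that you emphasize ``infinitely many fixed points'' while the paper simply notes a M\"obius map has at most a bounded number of fixed points, but the contradiction is the same.
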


\begin{proof}
Suppose, for a contradiction, that some element $\tau$ of $\textrm{Deck}(f^k)$ has order $p$.   Consider any point $z \in \hat{\mathbb{C}}$.  By Lemma \ref{l:notamultiple}, there exists some $j$ such that $a_{j,f^k}(z)$ is not a multiple of $p$.  By Lemma \ref{l:permutation}, $\tau$ acts as a permutation on the set, call it $S$, of the $a_{j,f^k}(z)$ many points in the fiber $\rho_{r^k}(z)$ whose local degree under $f^k$ is $a_{j,f^k}(z)$.  Since the group generated by $\tau$, $\langle \tau \rangle$ is a cyclic group of prime order $p$, the Orbit Stabilizer Theorem gives that the cardinality of the orbit under $\langle \tau \rangle$ of any point in $\mathbb{C}$  equals either $1$ or $p$.  Since $|S| = a_{j,f^k}(z)$ is not divisible by $p$, it follows that $S$ contains at least one point that is fixed by $\tau$.  
Since the point $z$ was arbitrary, this shows that every fiber contains at least one fixed point of $\tau$.  But since a non-identity M\"obius transformation can have at most 3 fixed points, this is a contradiction.  
\end{proof}

\begin{corollary} \label{c:degreeatmost5}
Let $f$ be a bicritical rational map of degree $d \leq 5$ and let $k \in \mathbb{N}$.  Then $\textrm{Deck}(f^k)$ is either cyclic or dihedral. 
\end{corollary}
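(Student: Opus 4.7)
The plan is to combine Proposition~\ref{p:elementaryfacts}\eqref{i:possibilities}, which lists the possible isomorphism types of $\Deck(f^k)$ as cyclic, dihedral, $A_4$, $S_4$, or $A_5$, with the two restrictions already proved in this section to eliminate every polyhedral possibility.

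First I would invoke Lemma~\ref{l:notA_4A_5} directly: since $f$ is bicritical, $\Deck(f^k)$ cannot be isomorphic to $A_4$ or $A_5$. This leaves only $S_4$ to exclude as a polyhedral option. To rule out $S_4$, I would use Proposition~\ref{p:primeorders}: the group $S_4$ contains both an element of order $2$ and an element of order $3$, so to eliminate it for a given degree $d$ it suffices to exhibit a prime $p \in \{2,3\}$ that does not divide $d$.

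I would then check the remaining cases $d \in \{2,3,4,5\}$ one at a time (really just two cases up to parity): for $d = 2$ and $d = 4$, the prime $p = 3$ does not divide $d$, so Proposition~\ref{p:primeorders} forbids any element of order $3$ in $\Deck(f^k)$, contradicting the presence of $3$-cycles in $S_4$; for $d = 3$ and $d = 5$, the prime $p = 2$ does not divide $d$, so the same proposition forbids any element of order $2$, again contradicting $S_4$. Hence $\Deck(f^k)$ is not isomorphic to $S_4$ in any of these degrees, and the only remaining possibilities from Proposition~\ref{p:elementaryfacts}\eqref{i:possibilities} are cyclic or dihedral.

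There is no real obstacle here; the corollary is essentially a bookkeeping step that packages the previous lemma and proposition. The only thing worth being careful about is ensuring that for each $d \in \{2,3,4,5\}$ a suitable prime in $\{2,3\}$ is available, which is clear since at least one of $2$ or $3$ is coprime to each of these values of $d$. Combined with Corollary~\ref{c:degreeatleast6}, which handles $d \geq 6$, this will complete the proof of Theorem~\ref{t:cyclicOrDihedral} for all degrees.
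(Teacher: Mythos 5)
Your proof is correct and follows essentially the same approach as the paper: cite Proposition~\ref{p:elementaryfacts}\eqref{i:possibilities} for the list of possible groups, use Lemma~\ref{l:notA_4A_5} to exclude $A_4$ and $A_5$, and use Proposition~\ref{p:primeorders} together with the observation that one of $2,3$ fails to divide each $d \in \{2,3,4,5\}$ to exclude $S_4$.
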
 

\begin{proof}
By Proposition \ref{p:elementaryfacts} part \eqref{i:possibilities}, $\textrm{Deck}(f^k)$ is either cyclic, dihedral, $A_4$, $S_4$ or $A_5$.  Lemma \ref{l:notA_4A_5} rules out $A_4$ and $A_5$.  $S_4$ has elements of order $2$ and elements of order $3$, and at least one of $2$ and $3$ does not divide $d$ for each of choice of $d$ in $\{2,\ldots,5\}$.   
Thus, Proposition \ref{p:primeorders} implies $\textrm{Deck}(f^k) \neq S_4$.  
\end{proof}

We can now prove Theorem~\ref{t:cyclicOrDihedral}.

\begin{proof}[Proof of Theorem~\ref{t:cyclicOrDihedral}]
Corollary \ref{c:degreeatmost5} gives the result in the case that degree of $f$ is  $\leq 5$, and Corollary \ref{c:degreeatleast6} gives the result for degree $\geq 6$.  If the degree of $f$ is odd, then by Proposition~\ref{p:primeorders}, $\Deck(f^k)$ cannot contain any elements of order $2$ and so cannot be dihedral. 
\end{proof}

\section{Detecting critical points and values of bicritical maps of degree $d \geq 3$ from their iterates}\label{s:higherdegree}

We begin with a definition.

\begin{definition}
 Let $f$ be a rational map with critical point set $\mathcal{C}_f$ (respectively critical value set $\mathcal{V}_f$). We will say that we can \emph{detect the set $\mathcal{C}_f$ (respectively $\mathcal{V}_f$) from $f^k$} if whenever $f^k = g^k$ for some bicritical rational map $g$ we have $\mathcal{C}_f = \mathcal{C}_g$ (respectively $\mathcal{V}_F = \mathcal{V}_G$).
\end{definition}

The idea behind this definition is that knowledge of $f^k$
 provides enough information for us to be able to recover the sets $\mathcal{C}_f$ and $\mathcal{V}_f$. We will show that if $f$ is a bicritical rational map and $k \geq 1$, then we can always detect $\mathcal{C}_f$ and $\mathcal{V}_f$ from $f^k$. In this section we prove the following.

\begin{theorem} \label{p:detectcpscvsnonquadratic}
Fix a rational map $F$.  If there exists a bicritical rational map $f$ of degree $\geq 3$ and $k \in \mathbb{N}$ such that $f^k = F$, then we can detect the sets $\mathcal{C}_f$ and $\mathcal{V}_f$ from $F$.  Specifically, $\mathcal{C}(f)$ is the set of fixed points of any element of $\textrm{Deck}(F)$ of order at least $3$, and $\mathcal{V}_f = \{x \in \mathbb{C} \mid F^{-1}(x) \subseteq \mathcal{C}_F\}$. 
\end{theorem}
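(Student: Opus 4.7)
My plan is to verify each of the two descriptions separately, drawing on the cyclic/dihedral structure of $\Deck(F)$ for the critical-point claim and a short induction on $k$ for the critical-value claim.

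First I would detect $\mathcal{C}_f$. By Lemma~\ref{l:elliptic}, $\Deck(f)$ contains the order-$d$ elliptic rotation $R$ whose fixed-point set on $\hatC$ is exactly $\mathcal{C}_f = \{c_1, c_2\}$, and Proposition~\ref{p:elementaryfacts} part~\eqref{i:subgroup} then places $R$ in $\Deck(f^k) = \Deck(F)$. Since $d \geq 3$, $R$ is an element of order $\geq 3$, so such elements do exist. By Theorem~\ref{t:cyclicOrDihedral}, $\Deck(F)$ is cyclic or dihedral; in either case its elements of order $\geq 3$ all lie in the cyclic subgroup of rotations, since reflections in a dihedral subgroup of $\textrm{Rat}_1$ have order $2$. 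Any finite cyclic subgroup of $\textrm{Rat}_1$ is generated by a single elliptic rotation, so all of its non-identity elements share a common pair of fixed points on $\hatC$. Hence every order-$\geq 3$ element of $\Deck(F)$ has fixed-point set equal to $\mathcal{C}_f$, as claimed.

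Next I would show $\mathcal{V}_f = \{x \in \hatC : F^{-1}(x) \subseteq \mathcal{C}_F\}$. The inclusion $\subseteq$ is direct: bicriticality together with $\deg_f(c_i) = d$ forces $f^{-1}(v_i) = \{c_i\}$ set-theoretically, so $F^{-1}(v_i) = f^{-(k-1)}(\{c_i\}) \subseteq f^{-(k-1)}(\mathcal{C}_f) \subseteq \mathcal{C}_F$. For the reverse inclusion I would induct on $k$. The base case $k=1$ is immediate from bicriticality. For the inductive step, suppose $F^{-1}(x) \subseteq \mathcal{C}_F$ and, for contradiction, $x \notin \mathcal{V}_f$; then $f^{-1}(x) = \{y_1, \ldots, y_d\}$ consists of $d$ distinct non-critical points. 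The key sub-claim is that $f^{-(k-1)}(y_i) \subseteq \mathcal{C}_{f^{k-1}}$ for each $i$: if $z \in f^{-(k-1)}(y_i)$, then $f^k(z) = x$ gives $z \in F^{-1}(x) \subseteq \mathcal{C}_F$, so $f^j(z) \in \mathcal{C}_f$ for some $0 \leq j \leq k-1$; since $f^{k-1}(z) = y_i \notin \mathcal{C}_f$, we must have $j \leq k-2$, which places $z$ in $\mathcal{C}_{f^{k-1}}$. The inductive hypothesis applied to $f^{k-1}$ at each $y_i$ then yields $\{y_1, \ldots, y_d\} \subseteq \mathcal{V}_f$, contradicting $d \geq 3 > 2 \geq |\mathcal{V}_f|$.

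The hard part---really the only non-routine step---is this final pigeonhole, which relies essentially on $d \geq 3$. When $d = 2$, the two non-critical preimages $y_1, y_2$ can coincide with the two critical values $v_1, v_2$ without producing any contradiction, which is precisely why the quadratic case requires the separate and considerably more intricate treatment carried out in the subsequent sections.
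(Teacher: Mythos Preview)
Your argument is correct. The detection of $\mathcal{C}_f$ is essentially identical to the paper's Lemma~\ref{l:nonquadraticcandetectcps}: both use Lemma~\ref{l:elliptic} to produce an order-$d$ element, and then the cyclic/dihedral structure (Theorem~\ref{t:cyclicOrDihedral}) to force all order-$\geq 3$ elements to share fixed points.

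For $\mathcal{V}_f$, your approach differs from the paper's Lemma~\ref{l:nonquadraticcandetectcvs}. The paper argues the contrapositive by building a single backward orbit $x_0 = x, x_1, \ldots, x_k$ with $f(x_i) = x_{i-1}$ that never passes through $\mathcal{V}_f$: at each step, since $x_{i-1} \notin \mathcal{V}_f$ there are $d$ distinct preimages, and as $d \geq 3 > |\mathcal{V}_f|$ at least one of them also avoids $\mathcal{V}_f$. The resulting $x_k$ is then a non-critical preimage of $x$ under $F$. You instead run an induction on $k$: assuming $F^{-1}(x) \subseteq \mathcal{C}_F$ with $x \notin \mathcal{V}_f$, you show each of the $d$ distinct $y_i \in f^{-1}(x)$ satisfies $(f^{k-1})^{-1}(y_i) \subseteq \mathcal{C}_{f^{k-1}}$, invoke the inductive hypothesis to place all $y_i$ in $\mathcal{V}_f$, and conclude by pigeonhole. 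Both arguments are short and both hinge on the same inequality $d \geq 3 > |\mathcal{V}_f|$; the paper's direct construction is perhaps slightly more concrete, while your inductive version makes the recursive structure of $\mathcal{C}_{f^k}$ explicit. Your closing remark about why $d = 2$ escapes the argument is also apt and matches the paper's motivation for the separate quadratic analysis.
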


First we need a simple observation about finite cyclic groups of M\"obius transformations.

\begin{lemma} \label{l:cyclic}
Let $G$ be a finite cyclic group of M\"obius transformations.  Then there exist two distinct points $x_1$ and $x_2$ in $\hat{\mathbb{C}}$ such that every nonidentity element of $g$ is an elliptic rotation around the axis connecting $x_1$ and $x_2$. 
\end{lemma}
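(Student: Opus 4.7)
The plan is to reduce everything to the well-known classification of Möbius transformations by conjugacy type and then exploit the fact that the cyclic group has a single generator. First I would recall the trichotomy: a nonidentity element of $\mathrm{PSL}(2,\mathbb{C})$ is either parabolic (a single fixed point on $\hat{\mathbb{C}}$, conjugate to $z\mapsto z+1$), loxodromic/hyperbolic (two fixed points on $\hat{\mathbb{C}}$, with multiplier of modulus $\neq 1$), or elliptic (two fixed points on $\hat{\mathbb{C}}$, with multiplier on the unit circle). Of these three, only elliptic elements can have finite order; parabolic iterates are $z\mapsto z+n$, and loxodromic multipliers are not roots of unity.

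Next I would pick a generator $\tau$ of $G$. Assuming $G$ is nontrivial (if $G$ is trivial the statement is vacuous and we may take $x_1,x_2$ to be any two points), the generator $\tau$ has finite order and so must be elliptic. Let $x_1,x_2\in\hat{\mathbb{C}}$ be its two fixed points, and let $\ell$ be the hyperbolic geodesic in $\mathbb{H}^3$ with endpoints $x_1,x_2$. After conjugation sending $x_1,x_2$ to $0,\infty$, the generator $\tau$ takes the form $z\mapsto \lambda z$ with $\lambda$ a primitive root of unity, which is precisely a rotation around $\ell$.

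Finally I would observe that every nonidentity element of $G$ is a power $\tau^k$, and each such power fixes both $x_1$ and $x_2$. Since a nonidentity Möbius transformation has at most two fixed points on $\hat{\mathbb{C}}$, the fixed-point set of $\tau^k$ is exactly $\{x_1,x_2\}$; being of finite order it is elliptic, and in the normalized coordinates above it is $z\mapsto \lambda^k z$, i.e.\ an elliptic rotation around $\ell$. Thus every nonidentity element of $G$ rotates around the common axis from $x_1$ to $x_2$, as required.

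There is no real obstacle here: the only subtle point is ruling out parabolic and loxodromic behavior for finite-order elements, which is immediate from either the normal forms or from the fact that the multiplier at a fixed point of $\tau^n$ is $\mu^n$, where $\mu$ is the multiplier of $\tau$, forcing $\mu$ to be a root of unity when $\tau$ has finite order.
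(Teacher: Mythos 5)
Your proof is correct, and it takes a slightly different route than the paper's. The paper proves the lemma by appealing to the characterization of when two Möbius transformations commute (they commute iff they share a fixed-point set, or are commuting involutions swapping each other's fixed points), and then uses the abelian structure: picking an element $g$ of order $\geq 3$ (handling $|G|=2$ separately), one concludes from commutativity that every other nonidentity element has the same fixed points as $g$. You instead exploit the cyclic structure directly: every element is a power of a single generator $\tau$, and every power of $\tau$ manifestly fixes both fixed points of $\tau$, so (being a nonidentity Möbius map with at most two fixed points) it has exactly that pair as its fixed-point set. Your argument is the more elementary of the two -- it avoids the commutativity lemma entirely and needs no case split -- though it uses cyclicity more essentially than the paper's argument, which would also apply verbatim to any finite abelian group of Möbius transformations containing an element of order $\geq 3$. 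Both approaches rely on the same elementary dichotomy (finite-order Möbius maps are elliptic; a nonidentity Möbius map has at most two fixed points), and both are complete.
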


\begin{proof}
Two well-known facts are that i) the only M\"obius transformations of finite order are elliptic, and ii) two nonidentity M\"obius transformations commute if and only if they have the same set of fixed points or are commuting involutions each interchanging the fixed points of the other.  If $|G|=2$, we are done.  So suppose $|G| \geq 3$ and let $g \in G$ be an element of order $\geq 3$; then since $g$ commutes with every element of $G$, it must have the same set of fixed points as every nonidentity element of $G$.  
\end{proof}

The proof of Theorem~\ref{p:detectcpscvsnonquadratic} now follows from the following two lemmas.

\begin{lemma} \label{l:nonquadraticcandetectcps}
Fix a rational map $F$.  Suppose there exists at least one bicritical map $f$ of degree $d \geq 3$ and integer $k \in \mathbb{N}$ such that $f^k = F$.  Then all elements of $\textrm{Deck}(F)$ of order $\geq 3$ have the same set $C$ of fixed points and  $\mathcal{C}_f = C$. 
\end{lemma}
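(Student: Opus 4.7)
The plan is to leverage the structural results already established about $\Deck(F)$. Since $f$ is bicritical of degree $d \geq 3$, Lemma~\ref{l:elliptic} produces an elliptic rotation $R \in \Deck(f)$ of order $d$ whose fixed point set is exactly $\mathcal{C}_f$. By Proposition~\ref{p:elementaryfacts}\eqref{i:subgroup}, $\Deck(f) \subseteq \Deck(f^k) = \Deck(F)$, so $R$ lies in $\Deck(F)$ as a specific element of order $d \geq 3$ with fixed point set $\mathcal{C}_f$. In particular, at least one element of order $\geq 3$ exists, so the statement is nonvacuous.

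Next I would show that every element of $\Deck(F)$ of order $\geq 3$ shares its fixed point set with $R$. By Theorem~\ref{t:cyclicOrDihedral}, $\Deck(F)$ is either cyclic or dihedral. In the cyclic case, Lemma~\ref{l:cyclic} applied directly to $\Deck(F)$ (which has order $\geq 3$ since it contains $R$) gives that every nonidentity element—and hence every element of order $\geq 3$—is an elliptic rotation around the axis spanned by the two fixed points of $R$. Since $R$ fixes exactly $\mathcal{C}_f$, these common fixed points must be $\mathcal{C}_f$.

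In the dihedral case, write $\Deck(F) \cong D_{2n}$ with cyclic rotation subgroup of order $n$. Because $R$ has order $d \geq 3$ and all reflections in $D_{2n}$ are involutions, $R$ must lie in the rotation subgroup, forcing $n \geq d \geq 3$. Any other element of order $\geq 3$ in $\Deck(F)$ must likewise be a rotation, so all such elements lie in the cyclic rotation subgroup, to which Lemma~\ref{l:cyclic} applies. This yields that they all share $R$'s fixed points, namely $\mathcal{C}_f$. Combining the two cases, the set $C$ of common fixed points exists and equals $\mathcal{C}_f$, which is both conclusions of the lemma.

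The argument is structurally short, and the only subtlety is the dihedral case: one must verify that any element of order $\geq 3$ is forced into the rotation subgroup (rather than being one of the reflections, which would have a different pair of fixed points). This is exactly where the hypothesis $d \geq 3$ is essential, and it is also the reason the quadratic case must be handled separately in later sections, where $\Deck(F)$ can in principle be $V_4$ or $D_8$ with no rotation of order $\geq 3$ coming from Lemma~\ref{l:elliptic}.
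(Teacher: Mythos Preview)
Your proof is correct and follows essentially the same approach as the paper: invoke Lemma~\ref{l:elliptic} and Proposition~\ref{p:elementaryfacts}\eqref{i:subgroup} to place an order-$d$ rotation fixing $\mathcal{C}_f$ inside $\Deck(F)$, then use Theorem~\ref{t:cyclicOrDihedral} together with Lemma~\ref{l:cyclic} to conclude. You are in fact slightly more careful than the paper, which appeals to Lemma~\ref{l:cyclic} without explicitly noting that in the dihedral case one must first pass to the cyclic rotation subgroup (where all elements of order $\geq 3$ necessarily live, since the reflections are involutions).
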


\begin{proof}
By Theorem \ref{t:cyclicOrDihedral}, $\textrm{Deck}(F)$ is cyclic or dihedral.  By Lemma \ref{l:elliptic}, the order $d$ elliptic rotation around the axis connecting the two critical points of $f$ is an element of $\textrm{Deck}(f)$.  Hence it is also an element of $\textrm{Deck}(F)$ by Proposition \ref{p:elementaryfacts} part \eqref{i:subgroup}.  But all elements of $F$ that have order $\geq 3$ are elliptic rotations that share the same set of fixed points by Lemma \ref{l:cyclic}  Hence $\mathcal{C}(f)$ is the set of two fixed points of any element of $\textrm{Deck}(F)$ of order at least $3$. 
\end{proof}

\begin{lemma} \label{l:nonquadraticcandetectcvs}
Let $f$ be a bicritical rational map of degree $d \geq 3$ and fix any integer $k \in \mathbb{N}$.  Then $x \in \mathcal{V}_f$ if and only if  $f^{-k}(x) \subseteq \mathcal{C}_{f^{k}}$.
\end{lemma}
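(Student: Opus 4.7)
The plan is to combine the product rule $\deg_{f^k}(w) = \prod_{j=0}^{k-1}\deg_f(f^j(w))$ — which identifies $\mathcal{C}_{f^k}$ with $\bigcup_{j=0}^{k-1} f^{-j}(\mathcal{C}_f)$ — with the very restrictive shape of fibers over critical values of a bicritical map. For the forward direction, suppose $x = f(c_i) \in \mathcal{V}_f$. Because $f$ is bicritical, the critical point $c_i$ has local degree $d$ and therefore accounts for the entire fiber degree over $x$, so set-wise $f^{-1}(x) = \{c_i\}$. In particular every $w \in f^{-k}(x)$ satisfies $f^{k-1}(w) = c_i \in \mathcal{C}_f$, and so $w \in f^{-(k-1)}(\mathcal{C}_f) \subseteq \mathcal{C}_{f^k}$.

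\medskip

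For the converse I would argue the contrapositive: assuming $x \notin \mathcal{V}_f$, I would exhibit a point in $f^{-k}(x)$ that is \emph{not} critical for $f^k$ by greedily constructing a backward orbit $x = w_0, w_1, \ldots, w_k$ with $f(w_i) = w_{i-1}$, maintaining the invariant $w_{i-1} \notin \mathcal{V}_f$ at each step $i \leq k$. Whenever this invariant holds, the fiber $f^{-1}(w_{i-1})$ is a set of $d$ distinct non-critical points (a critical point $c_j$ in this fiber would force $w_{i-1} = v_j \in \mathcal{V}_f$), and at most $|\mathcal{V}_f| = 2$ of these points themselves lie in $\mathcal{V}_f$. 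With $d \geq 3$ there is always a choice $w_i \in f^{-1}(w_{i-1}) \setminus \mathcal{V}_f$ to continue the induction for $i \leq k-1$; at the terminal step $i = k$ any preimage works, since all of them automatically lie outside $\mathcal{C}_f$. The resulting $w_k \in f^{-k}(x)$ satisfies $f^j(w_k) = w_{k-j} \notin \mathcal{C}_f$ for every $0 \leq j \leq k-1$, so $\deg_{f^k}(w_k) = 1$ and $w_k \notin \mathcal{C}_{f^k}$, contradicting the hypothesis $f^{-k}(x) \subseteq \mathcal{C}_{f^k}$.

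\medskip

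The only substantive step is the pigeonhole in the backward induction: the inequality $d - |\mathcal{V}_f| \geq 1$ is exactly the assumption $d \geq 3$, and it is precisely what keeps the backward orbit alive. In the quadratic case $d = 2$ both preimages of a point could simultaneously be critical values, so every backward orbit may be forced into $\mathcal{C}_f$; this failure mode is what necessitates the separate and considerably more intricate analysis of quadratic maps carried out in the later sections of the paper.
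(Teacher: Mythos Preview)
Your argument is correct and matches the paper's proof essentially line for line: both directions proceed exactly as in the paper, with the forward direction using $f^{-1}(v_i)=\{c_i\}$ and the converse greedily building a backward orbit that avoids $\mathcal{V}_f$ via the pigeonhole $d\geq 3>|\mathcal{V}_f|$. Your only departure is the minor observation that at the terminal step $i=k$ one need not insist $w_k\notin\mathcal{V}_f$, since $w_{k-1}\notin\mathcal{V}_f$ already forces $w_k\notin\mathcal{C}_f$; the paper simply carries the stronger invariant through all $k$ steps.
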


\begin{proof}
If $x \in \mathcal{V}_f$ then  we have 
$f^{-k}(x) \in f^{-(k-1)}(\mathcal{C}_f)  \subseteq  \mathcal{C}_{f^{-k}}.$

Conversely suppose $x \notin \mathcal{V}_f$. We will inductively construct a sequence $ x_0, x_1, \dotsc, x_k$  with $x = x_0$ and such that $x_{i-1} = f(x_i)$ for each $i$ and no $x_i$ is a critical value of $f$. Since no $x_i$ is a critical value of $f$, it follows that no $x_i$ can be a critical point of $f$.

Since the degree of $f$ is $d \geq 3$ and $x_0$ is not a critical value, there exists $x_1 \in f^{-1}(x_0)$ such that $x_1 \notin \mathcal{V}_f$. Inductively, suppose that $x_i$ ($1 \leq i \leq k-1$) is not a critical value. Then there exists $x_{i+1} \in f^{-1}(x_i)$ such that $x_{i+1}  \notin \mathcal{V}_f$. Now consider $x_k$. It is clear that $x_k \in f^{-k}(x)$. Furthermore, we claim $x_k \notin \mathcal{C}_{f^{k}}$. The local degree of $f^{k}$ at $x_k$ is equal to the products of the local degrees of $F$ at $x_i$. Since by construction the local degree of $f$ at each $x_i$ is equal to $1$, we see that the local degree of $f^{k}$ at $x_k$ is $1$. Thus $x_k$ is not a critical point of $f^{k}$.
\end{proof}

\begin{proof}[Proof of Theorem~\ref{p:detectcpscvsnonquadratic}]
 The claim for $\mathcal{C}_f$ is contained in Lemma~\ref{l:nonquadraticcandetectcps} and the claim for $\mathcal{V}_f$ is Lemma~\ref{l:nonquadraticcandetectcvs}.
\end{proof}

\section{Deck groups of iterates of quadratic rational maps}\label{s:Deckiteratesquads}

It is perhaps surprising that detecting $\mathcal{C}_f$ and $\mathcal{V}_f$ in the degree $2$ case is more difficult than the general higher degree bicritical case. One reason is that the conclusion of Lemma~\ref{l:nonquadraticcandetectcvs} is not true in general for quadratic rational maps, due to what we call \emph{critically coalescing} maps. 

\begin{definition}
We will say a quadratic rational map $f$ is critically coalescing if the two critical values of $f$ share a common image. In other words, denoting the critical values of $f$ by $v_1$ and $v_2$, we have $f(v_1)=f(v_2)$.  
\end{definition}

We will need the following observation, which will be refined later on.

\begin{lemma} \label{l:coalescing3pts}
Let $f$ be a critically coalescing quadratic rational map. Then for all $k \geq 2$, $x \in \mathcal{V}_f \cup \{ f(v_1) = f(v_2) \}$ if and only if $f^{-k}(x) \subseteq  f^{-k}(\mathcal{C}_{f^{k}})$.
\end{lemma}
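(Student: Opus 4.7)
The plan is to adapt the two-direction argument of Lemma~\ref{l:nonquadraticcandetectcvs} to the critically coalescing quadratic setting, with $w := f(v_1) = f(v_2)$ playing the new role forced by having only two preimages per point. I read the right-hand side of the equivalence as $f^{-k}(x) \subseteq \mathcal{C}_{f^k}$, matching the shape of Lemma~\ref{l:nonquadraticcandetectcvs}.

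For the forward direction I would argue directly from the coalescence. If $x = v_i \in \mathcal{V}_f$ then $f^{-1}(v_i) = \{c_i\}$, so every $y \in f^{-k}(v_i)$ satisfies $f^{k-1}(y) = c_i \in \mathcal{C}_f$, hence $y \in \mathcal{C}_{f^k}$. If $x = w$, critical coalescence gives $f^{-1}(w) = \{v_1, v_2\}$ and therefore $f^{-2}(w) = \{c_1, c_2\} = \mathcal{C}_f$; since $k \geq 2$, every $y \in f^{-k}(w)$ satisfies $f^{k-2}(y) \in \mathcal{C}_f$ and again lies in $\mathcal{C}_{f^k}$.

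For the reverse direction I would run the contrapositive, mirroring the backward-orbit construction in the proof of Lemma~\ref{l:nonquadraticcandetectcvs}. Starting from $x_0 = x \notin \mathcal{V}_f \cup \{w\}$, I would inductively build a chain $x_0, x_1, \ldots, x_k$ with $f(x_i) = x_{i-1}$ and $x_j \notin \mathcal{V}_f \cup \{w\}$ for $j = 0, \ldots, k-1$. Because in degree $2$ the fiber over any non-critical value consists of two non-critical preimages, such a chain forces $x_i \notin \mathcal{C}_f$ for all $i \geq 1$, so the local degree of $f^k$ at $x_k$ is $1$, producing a point $x_k \in f^{-k}(x) \setminus \mathcal{C}_{f^k}$, as required.

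The inductive step --- producing $x_{j+1} \in f^{-1}(x_j) \setminus (\mathcal{V}_f \cup \{w\})$ from $x_j \notin \mathcal{V}_f \cup \{w\}$ --- is the heart of the argument and the main obstacle relative to the degree $\geq 3$ proof, where the abundance of preimages made this step essentially automatic. The saving observation is that $v_1$ and $v_2$ occur only as preimages of $w$: since $f(v_i) = w \neq x_j$, neither lies in $f^{-1}(x_j)$, so the whole fiber already misses $\mathcal{V}_f$. If $w \notin f^{-1}(x_j)$ there is nothing further to check; otherwise $f^{-1}(x_j) = \{w, y\}$, where $y \neq w$ because $x_j \notin \mathcal{V}_f$ forces two distinct fiber points, and $y \notin \mathcal{V}_f$ by the previous sentence. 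Thus $y$ is a valid choice for $x_{j+1}$. This is precisely where critical coalescence enters the picture: the only fiber entirely trapped inside $\mathcal{V}_f \cup \{w\}$ is $f^{-1}(w) = \{v_1, v_2\}$, which is why $w$ must be adjoined to $\mathcal{V}_f$ in the exceptional set.
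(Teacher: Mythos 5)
Your proof is correct and follows essentially the same path as the paper's: the forward direction spells out what the paper dismisses as ``simple to see,'' and the reverse direction is the same backward-orbit induction, choosing at each step the preimage that is not $w$ and noting no preimage of a non-$w$ point can be a critical value. You are also right that the right-hand side is a typo for $f^{-k}(x) \subseteq \mathcal{C}_{f^k}$, which is what the paper's own proof establishes and what the parallel Lemma~\ref{l:nonquadraticcandetectcvs} asserts.
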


\begin{proof}
Let $\beta = f(v_1) = f(v_2)$. It is simple to see that if $x \in \mathcal{V}_f \cup \{ \beta \}$ then $f^{-k}(x) \subseteq  f^{-k}(\mathcal{C}_{f^{k}})$. So suppose $x \notin \mathcal{V}_f \cup \{ \beta \}$. We will construct a sequence $x_0,\dotsc,x_k$ with $x = x_0$ and $f(x_{j}) = x_{j-1}$.  So set $x = x_0$. In particular, since $x \neq \beta$, then neither element of $f^{-1}(x_0)$ is a critical value of $f$. Furthermore, at most one preimage can be equal to $\beta$. Thus there exists $x_1 \in f^{-1}(x_0)$ such that $x_1 \notin \mathcal{V}_f \cup \{ \beta \}$. Now we can inductively find $x_2,x_3,\dotsc,x_k$ such that $x_j \notin \mathcal{V}_f \cup \{ \beta \}$ for all $j \leq k$ by the same reasoning.  As with the proof of Lemma~\ref{l:nonquadraticcandetectcvs}, we can conclude that $x_k \in f^{-k}(x)$ but $x_k \notin \mathcal{C}_{f^{k}}$.
\end{proof}

In the case where $f$ is a bicritical map of degree $d \geq 3$, we were able to detect the sets $\mathcal{C}_f$ and $\mathcal{V}_f$ by exploiting the facts that $\Deck(f^k)$ contains elements of order $d \geq 3$ and all such elements necessarily fixed the critical points of $f$ pointwise.  The case $d=2$ and $\textrm{Deck}(f^k) \cong V_4$ is harder because no deck group elements of order at least $3$ exist, and elements of order $d=2$ do not necessarily fix the critical points pointwise. The aim of the next three sections is to prove the following theorem, an analog to Lemma~\ref{l:nonquadraticcandetectcps}.

\begin{theorem} \label{t:DetectQuadraticCritPts}
Let $f$ be a quadratic rational map.  Then we can detect the critical points of $f$ from $f^k$ ($k>1)$.  Specifically:
\begin{enumerate}
\item If $f$ is not critically coalescing, then $\Deck(f^k)$ is cyclic. In particular, either
\begin{enumerate}
\item $f$ is a power map and $\Deck(f^k)$ is isomorphic to $\mathbb{Z}_{2^k}$ so that the critical points of $f$ are the fixed points of any element $\mu$ which generates $\Deck(f^k)$. 
\item $\Deck(f^k) \cong \mathbb{Z}_2$, and the critical points of $f$ are the fixed points of the unique non-identity element of $\Deck(f^k)$.
\end{enumerate}
\item If $f$ is critically coalescing and not conjugate to $z \mapsto \frac{z^2-1}{z^2 +1}$ then $\Deck(f^k) \cong V_4$ for all $k\geq 2$. Furthermore:
\begin{enumerate}
\item If the forward orbit of the critical values does not contain a fixed point, then the image under $f^k$ of the critical points of $f$ is distinct from the image under $f^k$ of the elements of the other special pairs of $f$.
\item If the forward orbit of the critical values does contain a fixed point $\alpha$, then the critical points of $f$ are the fixed points of $\mu$, the unique element of $\Deck(f^k)$ for which $\mu(\alpha) = \beta$, where $\beta$ is the unique element of $f^{-1}(\alpha)$ distinct from $\alpha$. 
\end{enumerate}
\item If $f$ is conjugate to $z \mapsto \frac{z^2-1}{z^2 +1}$, then $\Deck(f^2) \cong V_4$ but $\Deck(f^k) \cong D_8$ for all $k \geq 3$ and, as in case (i), the critical points of $f$ are the fixed points of any element of order $4$ in $\Deck(f^k)$.
\end{enumerate}
\end{theorem}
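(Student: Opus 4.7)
The plan is to classify $\Deck(f^k)$ by case analysis on the critical-orbit structure of $f$, using explicit coordinates where $\mathcal{C}_f = \{0, \infty\}$ and $f(z) = (\alpha z^2 + \beta)/(\gamma z^2 + \delta)$ with $\alpha\delta \neq \beta\gamma$. By Theorem~\ref{t:cyclicOrDihedral} combined with Proposition~\ref{p:primeorders} (applied with $d=2$, ruling out all odd-order elements), $\Deck(f^k)$ is a finite $2$-group, so either $\mathbb{Z}_{2^n}$ or $D_{2^n}$. Lemma~\ref{l:elliptic} gives $\iota(z) = -z \in \Deck(f)$, and every M\"obius transformation either fixes $\{0, \infty\}$ pointwise (form $z \mapsto \lambda z$) or swaps them (form $z \mapsto a/z$, automatically an involution); the elements of $\Deck(f^k)$ fixing $\{0, \infty\}$ form the cyclic rotation subgroup by Lemma~\ref{l:cyclic}, while the swapping elements serve as reflections in the dihedral case.

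When $f$ is not critically coalescing, I would first handle the power-map case $f \in \{z^2, z^{-2}\}$ by direct computation of $\Deck(f^k) = \{z \mapsto \zeta z : \zeta^{2^k}=1\} \cong \mathbb{Z}_{2^k}$, whose common fixed-point set is $\{0,\infty\} = \mathcal{C}_f$. For non-power non-coalescing $f$, the claim $\Deck(f^k) = \{id,\iota\}$ requires ruling out both swaps and rotations of order $\geq 4$. For swaps, if $\sigma_a(z) = a/z \in \Deck(f^k)$ for the minimal such $k$, then by irreducibility of $\hat{\mathbb{C}}$ applied to the algebraic sets $\{z : f^{k-1}(a/z) = \pm f^{k-1}(z)\}$ together with $\Deck(f) = \{id,\iota\}$, one deduces the identity $f^{k-1} \circ \sigma_a = \iota \circ f^{k-1}$; for $k=2$ this expands in the normalized form to the coalescence condition $\alpha\delta + \beta\gamma = 0$, a contradiction, and for $k \geq 3$ the analogous higher-order polynomial identity admits no solution in the non-coalescing regime. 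For order-$4$ rotations $\rho(z) = iz$, a parallel analysis of $f^k(iz) = f^k(z)$ yields $\alpha\gamma = \beta\delta = 0$, which is precisely the power-map condition.

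When $f$ is critically coalescing, $f(v_1) = f(v_2)$ becomes $v_1 = -v_2$, equivalently $\alpha\delta + \beta\gamma = 0$. A direct calculation then shows the two involutions $\sigma_{\pm a}(z) = \pm a/z$ with $a^2 = -\beta\delta/(\alpha\gamma)$ satisfy $f \circ \sigma_{\pm a} = \iota \circ f$, so $\sigma_{\pm a} \in \Deck(f^2) \subseteq \Deck(f^k)$ for all $k \geq 2$, hence $V_4 \subseteq \Deck(f^k)$. For the upper bound, any extension to $D_8$ requires an order-$4$ rotation $\rho(z) = iz$, and examining $f^k(iz) = f^k(z)$ together with the coalescence relation forces $f$ (up to M\"obius conjugation) to be the exceptional map $(z^2 - 1)/(z^2 + 1)$. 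For this exceptional map, the identities $f(iz) = 1/f(z)$ and $f(1/u) = -f(u)$ (by direct substitution) combine to give $f^2(iz) = -f^2(z)$, so $iz \notin \Deck(f^2)$, and $f^3(iz) = f(-f^2(z)) = f^3(z)$ since $f$ is even, so $iz \in \Deck(f^k)$ for $k \geq 3$. Ruling out order-$8$ rotations $e^{i\pi/4} z$ by explicit computation on $f^k$ then closes the bound at $\Deck(f^k) = D_8$ for $k \geq 3$.

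The remaining task is to identify $\mathcal{C}_f$ from $\Deck(f^k)$. In the cyclic case, by Lemma~\ref{l:cyclic} every non-identity element shares the fixed-point pair $\{c_1, c_2\}$, so any generator's fixed points give $\mathcal{C}_f$; in the $D_8$ case, the two order-$4$ elements both fix $\{0,\infty\} = \mathcal{C}_f$. The main obstacle is the $V_4$ case, where the three involutions look interchangeable a priori. The plan is to use the dichotomy in the theorem statement: in subcase (2a), coalescence forces $f^k(\mathcal{C}_f)$ to be a single point while the images under $f^k$ of the fixed-point sets of $\sigma_{\pm a}$ are distinct pairs, with the hypothesis that no fixed point of $f$ lies in the critical orbit preventing analogous collapses; in subcase (2b), with fixed point $\alpha$ of $f$ in the critical orbit and unique non-$\alpha$ preimage $\beta$, a short argument shows that among the three non-identity $V_4$ elements only $\iota$ maps $\alpha \mapsto \beta$, isolating the correct involution. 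Verifying the pair-versus-single-point distinction in subcase (2a) rigorously, and the uniqueness of $\mu$ in (2b), are the technically most delicate steps.
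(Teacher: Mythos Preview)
Your explicit-coordinates approach works cleanly for $k=2$ but has real gaps where you try to push to $k\geq 3$ by appealing to ``the analogous higher-order polynomial identity.'' Having deduced $f^{k-1}\circ\sigma_a=\iota\circ f^{k-1}$ via irreducibility, you cannot iterate: the next step would need $f^{k-2}(\sigma_a(z))$ and $f^{k-2}(z)$ to lie in a common $f$-fiber, which they do not. The paper instead factors from the \emph{bottom}: since $\sigma_a(\mathcal{C}_f)=\mathcal{C}_f$, Lemma~\ref{Tomlemma} gives a M\"obius $\nu$ with $f\circ\sigma_a=\nu\circ f$, whence $\nu\in\Deck(f^{k-1})$; minimality forces $\Deck(f^{k-1})$ cyclic with fixed set $\mathcal{C}_f$, and then $\nu(\mathcal{V}_f)=\mathcal{V}_f$ forces either the power-map or the coalescing condition (Proposition~\ref{p:dihedralimpliescoalescing}, Lemma~\ref{l:powermaps}). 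The same mechanism handles your order-$4$ rotation, which you likewise leave as an unexplained identity for $k\geq 2$. Similarly, showing $\Deck(f^k)\not\supsetneq V_4$ for generic coalescing $f$ and \emph{arbitrary} $k$ is not a direct computation; the paper reduces it via Pakovich's result (Proposition~\ref{p:Pakovich}) to a degree-one conjugacy equation.

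Your subcase~(2a) contains an error. You assert that the $f^k$-images of the non-critical special pairs are ``distinct pairs,'' but since $f$ is even in your coordinates, each pair $f^{-1}(c_i)=\{\pm\sqrt{a}\}$ already collapses under a single application of $f$; in fact for $k\geq 3$ all four points $a_1,a_2,b_1,b_2$ share one common $f^k$-image (Lemma~\ref{l:critpreimscommonimage}). The correct distinction is that $f^k(\mathcal{C}_f)$ is a \emph{different} single point from that common image, and this is exactly what fails when $P_f$ contains a fixed point (Lemma~\ref{l:critcoalnofixedpoint}).

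Your subcase~(2b) is also incomplete for the purpose of \emph{detection}: you must locate $\alpha$ and $\beta$ using only $f^k$-data, not knowledge of $f$. The paper does this by counting critical points of $f^k$ in the fiber over each postcritical point (Lemma~\ref{l:numberofcritpointsinfiber}) when the postcritical orbit has length $m>2$, and by a cross-ratio computation (Lemma~\ref{lemma:crossratio}, Lemma~\ref{l:detectcpscvsmequals2}) in the Latt\`es case $m=2$; you have not addressed either mechanism.
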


We will prove Theorem~\ref{t:DetectQuadraticCritPts}  at the end of Section~\ref{s:Detectcritcoal}. Here, we prove that Theorem~\ref{t:DetectQuadraticCritPts} implies Theorem~\ref{mthm3}.


\begin{proof}[Proof of Theorem~\ref{mthm3}]
(Assuming Theorem~\ref{t:DetectQuadraticCritPts}) If $f$ is not critically coalescing then, by Theorem~\ref{t:DetectQuadraticCritPts}, $\Deck(f^k)$ is cyclic of order $2^n$ for some $n$. Moreover, $\Deck(f^k) \cong \mathbb{Z}_2$ for all $k\geq 1$ if and only if $f$ is neither a power map nor critically coalescing. If $f$ is critically coalescing, then Theorem~\ref{t:DetectQuadraticCritPts} asserts that $\Deck(f^k) \cong V_4$ for all $k \geq 2$, unless $f$ is conjugate to $z \mapsto \frac{z^2-1}{z^2 + 1}$, in which case $\Deck(f^k) \cong D_8$ for all $k \geq 3$.
\end{proof}


The proof of Theorem~\ref{t:DetectQuadraticCritPts} requires studying the groups of deck transformations for iterates of quadratic rational maps; this is the goal of the present section. In the next two sections we will use the obtained results to detect the critical points of quadratic rational maps, thus enabling us to prove the theorem. Proposition~\ref{p:characterizecritcoal} will show that $\textrm{Deck}(f^k) \cong V_4$ precisely in the critically coalescing case. 
    
  \begin{lemma}  \label{l:specialpairs}
  Let $G \cong V_4$ be a group of M\"obius transformations acting on $\hat{\mathbb{C}}$. 
   Then there are precisely $6$ points in $\hat{\mathbb{C}}$ that are fixed pointwise by some non-identity element of $G$; each  of the three non-identity elements of $G$ fixes a pair of these points.
  \end{lemma}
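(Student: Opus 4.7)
My plan is to exploit two structural facts about M\"obius transformations already invoked earlier in the paper: every finite-order M\"obius transformation is elliptic and hence has exactly two fixed points on $\hat{\mathbb{C}}$, and two non-identity M\"obius transformations commute if and only if they share a fixed-point set or are commuting involutions each interchanging the two fixed points of the other (this is the dichotomy recalled in the proof of Lemma~\ref{l:cyclic}). Each of the three non-identity elements of $V_4$ has order $2$, so each fixes exactly two points of $\hat{\mathbb{C}}$, giving an a priori upper bound of six such fixed points in total.

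The core step will be to show that these three pairs of fixed points are pairwise disjoint, which would immediately give the count of six. First I would fix two distinct non-identity elements $\sigma, \tau \in G$. Since $G \cong V_4$ is abelian, $\sigma$ and $\tau$ commute, so the dichotomy above applies. If $\sigma$ and $\tau$ had the same fixed-point set, then they would be equal, because an elliptic involution is uniquely determined by its two fixed points (it is the unique rotation by $\pi$ about the axis through them). This contradicts $\sigma \neq \tau$, so we must be in the second case of the dichotomy, in which each of $\sigma$ and $\tau$ swaps the two fixed points of the other; in particular, their fixed-point sets are disjoint.

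Applying this argument to each of the three pairs of distinct non-identity elements of $G$, the three two-element fixed-point sets are pairwise disjoint, so their union has exactly $2+2+2 = 6$ elements, each fixed by precisely one non-identity element of $G$. I do not anticipate a significant obstacle: the entire argument is driven by the rigidity fact that an elliptic involution is determined by its fixed-point pair, together with the commuting-M\"obius dichotomy already in hand. The only care needed is to state the setup cleanly enough that the bookkeeping---six fixed points, three disjoint pairs, one per non-identity element---is transparent.
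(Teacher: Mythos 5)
Your argument is correct, and it takes a genuinely different route from the paper. The paper's proof is a one-line appeal to the classical fact that finite subgroups of $\mathrm{PSL}(2,\mathbb{C})$ are determined up to conjugacy by their isomorphism type: it observes that one can exhibit a single concrete $V_4$ satisfying the conclusion (e.g.\ generated by $z\mapsto -z$ and $z\mapsto 1/z$), and then transports the conclusion to all copies of $V_4$ by conjugation, since the fixed-point configuration is a conjugation-invariant. Your proof is instead internal and structural: you use that each non-identity element is an elliptic involution with exactly two fixed points, that an elliptic involution is determined by its fixed-point pair, and the commuting-M\"obius dichotomy (the paper's Lemma~\ref{l:Beardon}) to conclude that any two distinct non-identity elements of $G$ must have disjoint fixed-point sets, yielding exactly $2+2+2=6$ points. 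Both arguments are sound. The paper's is shorter once one accepts the conjugacy-classification fact wholesale; yours is more self-contained, avoids invoking the classification, and makes the disjointness of the three pairs --- which is exactly what the rest of the paper needs when it speaks of ``special pairs'' --- an explicit conclusion of the proof rather than something read off from a model example. One small point worth making explicit in a write-up: in the second branch of the dichotomy, since $\tau$ interchanges the two fixed points of $\sigma$, neither of them is fixed by $\tau$, which is exactly the disjointness you want; stating that step removes any appearance of a gap.
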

  
  \begin{proof}
  It is easy to construct a group $G \cong V_4$ of M\"obius transformations that satisfies the conclusion.  Then the fact that every $G \cong V_4$ satisfies the conclusion follows from the well-known fact (see e.g. \cite{Beardon:Groups}) that finite groups of M\"obius transformations are isomorphic if and only if they are conjugate. 
  \end{proof}
  
  These pairs will play an important role in our strategy for detecting $\mathcal{C}_f$ and $\mathcal{V}_f$.
  
  \begin{definition} 
For $G \cong V_4$ a group of M\"obius transformations acting on $\hat{\mathbb{C}}$, the \emph{special pairs} are the three pairs of points in $\hat{\mathbb{C}}$ defined by Lemma~\ref{l:specialpairs}
  \end{definition}
 
 We will give a characterization of the special pairs in Proposition~\ref{p:specialpairs2}.

 \subsection{Characterizing when $\textrm{Deck}(f^k) \cong V_4$}

 In order to prove Theorem~\ref{t:DetectQuadraticCritPts}, we need to investigate exactly when we have $\Deck(f^k) \cong V_4$.
 
 We begin with a few preliminary lemmas.  We will use the notation $\textrm{Fix}(\phi)$ to denote the set of fixed points of a map $\phi$. 
  \begin{lemma}
 \label{Klein4lemma}
Let $f$ be a quadratic rational map so that $\textrm{Deck}(f^k) \cong V_4$ for some iterate $k\in\mathbb N$. Let $\phi\in \textrm{Deck}(f^k)$ be the generator of $\mathrm{Deck}(f)$. Then $\mathrm{Fix}(\phi)=\mathcal C_f$, and every element of $\mathrm{Deck}(f^k)$ maps the set $\mathrm{Fix}(\phi)$ to itself. 
\end{lemma}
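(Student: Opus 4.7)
The plan is to first pin down exactly which element $\phi$ is, and then deduce both claims from the abelian structure of $V_4$ together with a standard fact about conjugation and fixed points of M\"obius transformations.

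For the first claim, I would argue that $\phi$ is forced to be the elliptic rotation of order $2$ about the axis through $\mathcal{C}_f$. By Lemma \ref{l:elliptic} applied with $d=2$, $\Deck(f)$ contains the order-$2$ M\"obius transformation $\sigma$ whose fixed point set is exactly $\mathcal{C}_f$. Proposition \ref{p:elementaryfacts}(vi) bounds $|\Deck(f)| \leq d = 2$, so $\Deck(f) = \{\mathrm{id}, \sigma\}$ and its generator must be $\sigma$. Hence $\phi = \sigma$ and $\mathrm{Fix}(\phi) = \mathcal{C}_f$.

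For the second claim, the key observation is that Proposition \ref{p:elementaryfacts}(ii) gives $\Deck(f) \subseteq \Deck(f^k)$, so $\phi$ sits inside $\Deck(f^k) \cong V_4$. Since $V_4$ is abelian, every $\psi \in \Deck(f^k)$ commutes with $\phi$. I would then invoke the general identity that for any invertible map $\psi$ and any self-map $\phi$, the fixed-point set of the conjugate satisfies $\mathrm{Fix}(\psi \phi \psi^{-1}) = \psi(\mathrm{Fix}(\phi))$. Commutativity gives $\psi \phi \psi^{-1} = \phi$, hence $\psi(\mathrm{Fix}(\phi)) = \mathrm{Fix}(\phi)$. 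Combined with the first claim, each $\psi \in \Deck(f^k)$ preserves $\mathcal{C}_f$ setwise.

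The main obstacle is essentially non-existent: the argument pulls together three facts already in the paper (Lemma \ref{l:elliptic}, Proposition \ref{p:elementaryfacts}(ii), and Proposition \ref{p:elementaryfacts}(vi)) with one standard identity about conjugation and fixed points. The only subtle checkpoint worth flagging is that the statement implicitly assumes $\phi \neq \mathrm{id}$, which is automatic because Lemma \ref{l:elliptic} forces $\Deck(f)$ to be non-trivial; this ensures the first claim is non-vacuous and identifies $\phi$ uniquely as $\sigma$.
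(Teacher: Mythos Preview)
Your proof is correct and takes a cleaner, more group-theoretic route than the paper's own argument.

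For the first claim, the paper argues directly from fiber-preservation: since $\phi$ is a deck transformation of $f$ and the fiber over each critical value is a single point (the corresponding critical point), $\phi$ must fix both critical points; having at most two fixed points then forces $\mathrm{Fix}(\phi)=\mathcal C_f$. Your citation of Lemma~\ref{l:elliptic} together with the order bound in Proposition~\ref{p:elementaryfacts} reaches the same conclusion more quickly.

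For the second claim, the paper does a hands-on computation: writing $\Deck(f^k)=\{\mathrm{id},\phi,\psi,\eta\}$, it uses the $V_4$ relations to show $\psi$ and $\eta$ agree on $\{c_1,c_2,\psi(c_1),\psi(c_2)\}$, and since distinct M\"obius transformations agree on at most two points, this forces $\psi(c_i)\in\{c_1,c_2\}$; a further case split rules out pointwise fixing. Your argument bypasses all of this by using abelianness of $V_4$ and the conjugation identity $\mathrm{Fix}(\psi\phi\psi^{-1})=\psi(\mathrm{Fix}(\phi))$, which is exactly the content of Lemma~\ref{l:Beardon} (stated later in the paper). The paper's computation does yield a bit more than the lemma asserts---namely that the non-$\phi$ involutions genuinely \emph{swap} $c_1$ and $c_2$---but this extra information is re-derived independently where it is needed (in the proof of Proposition~\ref{p:characterizecritcoal}), so nothing downstream is lost by your shorter route.
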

 \begin{proof} Write $\textrm{Deck}(f^k) =\{\eta,\psi,\phi,\mathrm{id}\}$ with $\textrm{Deck}(f)=\{\phi,\mathrm{id}\}$. Since $\phi$ is a deck transformation of $f$, it preserves fibers of $f$ (by Proposition  \ref{p:elementaryfacts}). Because $f$ is quadratic, the fiber over each critical value contains exactly one point (a critical point of $f$). Therefore, $\phi$ must fix each critical point of $f$. Since $\phi$  can have at most two fixed points, this implies $\mathrm{Fix}(\phi)=\mathcal C_f$. Write $\mathrm{Fix}(\phi)=\{c_1,c_2\}$, and consider how the elements $\psi$ and $\eta$ act on this set. Because  $\textrm{Deck}(f^k) \cong V_4$, we have 
 \begin{align*}
&x:=\psi(c_1)=\eta(c_1), \quad \psi(x)=\eta(x)=c_1,\quad \text{and}\\
&y:=\psi(c_2)=\eta(c_2), \quad \psi(y)=\eta(y)=c_2.
\end{align*}
So the involutions $\psi$  and $\eta$ coincide on the set $\{x,y,c_1,c_2\}$. Since $\eta$ and $\psi$ are distinct M\"obius transformations, we must have $x,y\in\{c_1,c_2\}$. The points $x$ and $y$ are distinct, so there are two possibilities: 
\[
x=c_1\text{ and }y=c_2 \qquad \text{or}\qquad  x=c_2 \text{ and }y=c_1.
\] 
In the first case, $\mathrm{Fix}(\phi)=\mathrm{Fix}(\psi)=\mathrm{Fix}(\eta)$. But then $\phi$, $\psi$, and $\eta$ would be three nontrivial involutions with the same pair of fixed points, so they would all coincide, which is not possible. In the second case, $c_1$ and $c_2$ comprise a common 2-cycle for the elements $\eta$ and $\psi$. In particular, we see that $\eta$ and $\psi$ map $\mathrm{Fix}(\phi)$ to itself. 
 \end{proof}
 
  \begin{lemma}\label{Tomlemma}
Let $f$ be a bicritical rational map with critical point set $\mathcal{C}_f$. Then if $\mu$ is a M\"obius transformation such that $\mu(\mathcal{C}_f) = \mathcal{C}_f$, then there exists a unique M\"obius transformation $\nu$ such that $\nu \circ f = f \circ \mu$. Furthermore $\nu (\mathcal{V}_f) = \mathcal{V}_f$.
\end{lemma}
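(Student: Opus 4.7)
The plan is to reduce to a normal form in which the computation becomes transparent. As used in the proof of Lemma~\ref{l:elliptic}, after conjugating $f$ by an appropriate M\"obius transformation (this does not affect the truth of the statement, since $\mu$ and the sought $\nu$ transform by the same conjugacy) we may assume $\mathcal{C}_f = \{0,\infty\}$, so that $f(z) = \frac{\alpha z^d + \beta}{\gamma z^d + \delta}$ for some $\alpha,\beta,\gamma,\delta \in \mathbb{C}$. This factors as $f = \phi \circ \pi$, where $\pi(z) = z^d$ and $\phi$ is the M\"obius transformation $\phi(w) = \frac{\alpha w + \beta}{\gamma w + \delta}$.

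With this factorization in hand, I would observe that any M\"obius transformation $\mu$ preserving $\{0,\infty\}$ has the form $\mu(z) = \lambda z$ or $\mu(z) = \lambda/z$ for some $\lambda \in \mathbb{C}^\ast$. In either case, a direct computation yields $\pi \circ \mu = \tilde{\nu} \circ \pi$, where $\tilde{\nu}$ is the M\"obius transformation $w \mapsto \lambda^d w$ or $w \mapsto \lambda^d/w$, respectively. Setting $\nu := \phi \circ \tilde{\nu} \circ \phi^{-1}$ then produces a M\"obius transformation satisfying
\[
\nu \circ f \;=\; \phi \circ \tilde{\nu} \circ \phi^{-1} \circ \phi \circ \pi \;=\; \phi \circ \pi \circ \mu \;=\; f \circ \mu,
\]
as required.

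Uniqueness is immediate: if $\nu_1 \circ f = \nu_2 \circ f$, then $\nu_1$ and $\nu_2$ agree on $f(\hat{\mathbb{C}}) = \hat{\mathbb{C}}$, hence $\nu_1 = \nu_2$. For the ``furthermore'' assertion, using $\mu(\mathcal{C}_f) = \mathcal{C}_f$ in the third equality,
\[
\nu(\mathcal{V}_f) \;=\; \nu \circ f(\mathcal{C}_f) \;=\; f \circ \mu(\mathcal{C}_f) \;=\; f(\mathcal{C}_f) \;=\; \mathcal{V}_f.
\]
The only conceptual step is the factorization $f = \phi \circ \pi$, which is available precisely because $f$ is bicritical; once this is in hand, the problem reduces to the trivial verification that power maps intertwine the two types of $\{0,\infty\}$-preserving M\"obius maps with other M\"obius maps, so I do not expect any real obstacle.
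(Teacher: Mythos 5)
Your proof is correct and takes essentially the same route as the paper: both arguments reduce to the power map $z\mapsto z^d$ via the factorization of a bicritical map through $z^d$ (the paper writes $f = \alpha \circ g \circ \beta$, while you first conjugate so $\mathcal{C}_f = \{0,\infty\}$ and then write $f = \phi \circ \pi$, which is the same idea), verify the intertwining for power maps by hand, and transfer it back. The uniqueness and the "furthermore" clause are handled identically via surjectivity and applying the identity $\nu\circ f = f\circ\mu$ to $\mathcal{C}_f$.
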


\begin{proof}
Once we prove existence, the uniqueness will follow from the surjectivity of $f$. We first prove the existence result for $g(z) = z^d$. In this case, $\mu$ is a M\"obius transformation such that $\mu(\mathcal{C}_g) = \mathcal{C}_g$ if and only if $\mu = a z^{\pm 1}$ for some $a \in \mathbb{C} \setminus \{ 0 \}$. But then $g \circ \mu = a^d z^{\pm d}$, and so taking $\nu = a^d z$ completes the proof for $g(z) = z^d$.

Now suppose that $f$ is bicritical of degree $d$. Then there exist M\"obius transformations $\alpha$ and $\beta$ such that $f = \alpha \circ g \circ \beta$, where $g(z) = z^d$. In particular $\beta(\mathcal{C}_g) = \mathcal{C}_f$. Thus if $\mu$ fixes $\mathcal{C}_f$ as a set then $\mu' = \beta^{-1} \circ \mu \circ \beta$ fixes $\mathcal{C}_g$ as a set, and by the above there exists $\nu'$ such that $\nu' \circ g = g \circ \mu'$. Hence taking $\nu = \alpha \circ \nu' \circ \alpha^{-1}$ we see that
\begin{align*}
 \nu \circ f	 	&= \nu \circ (\alpha \circ g \circ \beta) \\
 			&= (\alpha \circ \nu' \circ \alpha^{-1}) \circ \alpha \circ g \circ \beta \\
			& = \alpha \circ (\nu' \circ g) \circ \beta \\
			&= \alpha \circ (g \circ \mu') \circ \beta \\
			&= \alpha \circ g \circ (\beta \circ \mu \circ \beta^{-1}) \circ \beta \\
			&= \alpha \circ g \circ \beta \circ \mu \\
			&= f \circ \mu
\end{align*}
as desired. The fact that $\nu (\mathcal{V}_f) = \mathcal{V}_f$ is clear.
\end{proof}

Observe that for a M\"{o}bius transformation  $\mu \in \Deck(f^k)$, for some $k>1$, then if $\nu$ is the whose existence is guaranteed by Lemma~\ref{Tomlemma}, we have following commutative diagram.
\begin{center}
  \begin{tikzcd} 
  \hat{\mathbb{C}} \arrow[r, "\mu"] \arrow[d, "f" ']
    &   \hat{\mathbb{C}} \arrow[d, "f" ] \\
      \hat{\mathbb{C}} \arrow[r, "\nu"] \arrow[d, "f^{k-1}" ']
    &   \hat{\mathbb{C}} \arrow[d, "f^{k-1}" ] \\
    \hat{\mathbb{C}} \arrow[r, "\mathrm{id}" ]
&   \hat{\mathbb{C}} \end{tikzcd}
\end{center}
The large outermost rectangle commutes since $\mu\in \textrm{Deck}(f^k)$. Therefore, the square in the bottom commutes as well. As a consequence, $\nu \in  \textrm{Deck}(f^{k-1})$.

 \begin{lemma}
 \label{l:minimalk}
 Let $f$ be a quadratic rational map.  If there exists $k \in \mathbb{N}$ such that $\textrm{Deck}(f^k) \cong V_4$, then the minimal such $k$ is $k=2$.  
 \end{lemma}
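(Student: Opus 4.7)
The plan is to argue by contradiction: suppose the minimal $k$ with $\textrm{Deck}(f^k)\cong V_4$ satisfies $k\geq 3$ (the case $k=1$ is immediately impossible because $|\textrm{Deck}(f)|\leq \deg(f)=2$), and derive that $\textrm{Deck}(f^2)$ already contains $V_4$, contradicting minimality.

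First, I would set up a descent homomorphism from $\textrm{Deck}(f^k)$ to $\textrm{Deck}(f^{k-1})$. Write $\textrm{Deck}(f^k)=\{\mathrm{id},\phi,\psi,\eta\}$ with $\phi$ generating $\textrm{Deck}(f)$ as in Lemma~\ref{Klein4lemma}. That lemma guarantees every $\mu\in\textrm{Deck}(f^k)$ preserves $\mathcal{C}_f$ setwise, so Lemma~\ref{Tomlemma} produces a unique M\"obius transformation $\nu_\mu$ with $\nu_\mu\circ f=f\circ\mu$, and the commutative diagram observation immediately after Lemma~\ref{Tomlemma} shows $\nu_\mu\in\textrm{Deck}(f^{k-1})$. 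Writing $\nu_{\mu_1\mu_2}\circ f=f\circ\mu_1\mu_2=\nu_{\mu_1}\nu_{\mu_2}\circ f$ and using surjectivity of $f$ shows that $\Phi\colon\mu\mapsto\nu_\mu$ is a group homomorphism $\Phi\colon\textrm{Deck}(f^k)\to\textrm{Deck}(f^{k-1})$. Its kernel is exactly $\textrm{Deck}(f)=\{\mathrm{id},\phi\}$, so the image has order $|V_4|/2=2$.

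Next I would exploit the minimality assumption together with the subgroup lattice of $V_4$. Since $\textrm{Deck}(f^{k-1})\subseteq \textrm{Deck}(f^k)\cong V_4$ (by Proposition~\ref{p:elementaryfacts}(ii)), contains $\textrm{Deck}(f)\cong\mathbb{Z}_2$, and is not all of $V_4$ by minimality of $k$, it must equal $\{\mathrm{id},\phi\}$. The image of $\Phi$ then sits inside $\{\mathrm{id},\phi\}$ and has order $2$, so the image equals $\{\mathrm{id},\phi\}$. In particular $\Phi(\psi)=\phi$, which reads
\[
\phi\circ f = f\circ\psi.
\]
Composing with $f$ on the left and using $f\circ\phi=f$ (because $\phi\in\textrm{Deck}(f)$) yields
\[
f^2\circ\psi = f\circ(\phi\circ f) = (f\circ\phi)\circ f = f^2,
\]
so $\psi\in\textrm{Deck}(f^2)$. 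The same computation with $\eta$ in place of $\psi$ shows $\eta\in\textrm{Deck}(f^2)$. Hence $\textrm{Deck}(f^2)\supseteq\{\mathrm{id},\phi,\psi,\eta\}\cong V_4$, and since $|\textrm{Deck}(f^2)|\leq \deg(f^2)=4$ we conclude $\textrm{Deck}(f^2)\cong V_4$, contradicting the assumption that $k\geq 3$ is minimal.

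The main obstacle is really the setup: one has to verify that $\mu\mapsto\nu_\mu$ is well-defined (which needs Lemma~\ref{Klein4lemma} to know every element preserves $\mathcal{C}_f$), that it lands in $\textrm{Deck}(f^{k-1})$ (the commutative diagram), and that it is a homomorphism. Once the homomorphism and its kernel are identified, the rest is a short diagram chase using the $V_4$ structure and the constraint $|\textrm{Deck}(f^2)|\leq 4$. I do not anticipate any calculational difficulty, only the bookkeeping around which non-identity element of $\textrm{Deck}(f^{k-1})$ the images $\nu_\psi,\nu_\eta$ must equal.
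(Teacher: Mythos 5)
Your proof is correct and follows essentially the same route as the paper's: pull a nonidentity element of $\mathrm{Deck}(f^k)\setminus\mathrm{Deck}(f)$ down to $\mathrm{Deck}(f^{k-1})$ via Lemma~\ref{Tomlemma} and the commutative diagram, use minimality together with the subgroup lattice of $V_4$ to force $\mathrm{Deck}(f^{k-1})=\mathrm{Deck}(f)$, and then push back up to conclude that the element already lies in $\mathrm{Deck}(f^2)$. The paper does this implicitly for a single element $\eta$ and does not formalize the descent as a homomorphism; your version makes the map $\mu\mapsto\nu_\mu$ and its kernel explicit, which is cleaner bookkeeping but not a different argument.
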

  
  \begin{proof}
Suppose that $k$ is minimal so that $\textrm{Deck}(f^k) \cong V_4$; note that $k>1$. Write $\textrm{Deck}(f^k) =\{\eta,\psi,\phi,\mathrm{id}\}$ with $\textrm{Deck}(f)=\{\phi,\mathrm{id}\}$. By Lemma \ref{Klein4lemma}, $\eta$ maps $\mathcal C_f$ to itself, so by Lemma \ref{Tomlemma}, there is a M\"obius transformation $\mu$ so that the top square in the following diagram commutes. 
 \begin{center}
  \begin{tikzcd} 
  \hat{\mathbb{C}} \arrow[r, "\eta"] \arrow[d, "f" ']
    &   \hat{\mathbb{C}} \arrow[d, "f" ] \\
      \hat{\mathbb{C}} \arrow[r, "\mu"] \arrow[d, "f^{k-1}" ']
    &   \hat{\mathbb{C}} \arrow[d, "f^{k-1}" ] \\
    \hat{\mathbb{C}} \arrow[r, "\mathrm{id}" ]
&   \hat{\mathbb{C}} \end{tikzcd}
\end{center}
As we saw above, this means that $\mu \in \Deck(f^{k-1})$. Note that since $k$ is minimal with respect to the property that $\textrm{Deck}(f^k) \cong V_4$, we must have $\textrm{Deck}(f^{k-1}) =\textrm{Deck}(f)$, so $\mu\in \mathrm{Deck}(f)$, and  $f = f \circ \mathrm \mu$. But then the outer rectangle commutes for $f^{k-1}=f$, or $k=2$ as desired. 
  \end{proof}

We now show that, with one (up to conjugacy) exception, if $\Deck(f^2) \cong V_4$ then $\Deck(f^k) \cong V_4$ for all $k \geq 2$.
 
 \begin{lemma} \label{l:criticallyCoalescing4DistinctPts}
 Let $f$ be a critically coalescing  quadratic rational map.  Then $\mathcal{V}_f \cup \mathcal{C}_f$ consists of $4$ distinct points. 
 \end{lemma}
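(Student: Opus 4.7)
The plan is to argue by contradiction and careful fiber counting. Since $f$ is bicritical of degree $2$, we already have $|\mathcal{C}_f|=2$, so the lemma reduces to showing (a) the two critical values are distinct, and (b) no critical point coincides with a critical value. The main tool throughout will be the basic fact that every fiber of a degree-$2$ map has total multiplicity $2$; in particular, the fiber over any critical value consists of a single point (the associated critical point) with local degree $2$.

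For (a), if $v_1 = v_2$ then this common fiber would contain both critical points with local degree $2$ each, forcing the total fiber multiplicity to be at least $4$, a contradiction. For (b), I would argue by contradiction: suppose some $c_i$ equals some $v_j$. By the obvious $c_1 \leftrightarrow c_2$, $v_1 \leftrightarrow v_2$ symmetry, one may assume $c_1 = v_j$, and split into two cases. If $c_1 = v_1$, then $c_1$ is a fixed critical point, so $f^{-1}(c_1) = \{c_1\}$; the coalescence identity $\beta := f(v_1) = f(v_2) = c_1$ then forces $v_2 \in f^{-1}(c_1) = \{c_1\} = \{v_1\}$, contradicting (a). If instead $c_1 = v_2 \neq v_1$, then $c_1$ is still a critical value, so $f^{-1}(c_1) = \{c_2\}$; the coalescence identity becomes $\beta = f(c_1) = v_1$ and simultaneously $f(v_1) = \beta = v_1$, so that $v_1$ is a fixed point of $f$. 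But then the fiber $f^{-1}(v_1) = f^{-1}(\beta)$ contains both $v_1$ (as a fixed point) and $v_2$ (by definition of $\beta$); since $v_1 \neq v_2$ by (a), these two distinct points exhaust a fiber of total multiplicity $2$, so each has local degree $1$ — contradicting the fact that $v_2 = c_1$ is a critical point.

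The only subtle step is the second subcase of (b), where one must follow the orbit one extra step to discover that coalescence combined with $c_1 = v_2$ creates a fixed point at $v_1$, and then read off the contradiction from local degrees at $c_1$ inside the fiber $f^{-1}(v_1)$. Everything else is a one-line fiber count.
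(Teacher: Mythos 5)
Your argument is correct, and the core tool (fiber multiplicity counting in a degree-$2$ map) is the same as the paper's, but you take a longer route. The paper runs a single unified contradiction: suppose some $c_i = v_j$, set $\beta := f(v_1) = f(v_2)$, and observe that the fiber $f^{-1}(\beta)$ contains $c_i = v_j$ with multiplicity $2$ (since $c_i$ is critical) together with the other critical value $v_{\neq j}$, a distinct point since $v_1 \neq v_2$, for a total multiplicity $\geq 3 > 2$; this covers both of your subcases at once, with no orbit-chasing and no need to distinguish whether $c_1$ equals its own image or the other critical value. Your Case~2 actually contains exactly this count (the fiber over $\beta = v_1$ holds $v_1$ and the critical point $v_2 = c_1$), you just phrase the contradiction in terms of forcing local degree $1$ at a critical point rather than overflowing the degree; once you notice that, Case~1 is subsumed and the case split evaporates. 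One minor difference: you establish $v_1 \neq v_2$ from scratch, whereas the paper simply cites Milnor for that standard fact about bicritical maps; your first-principles derivation is perfectly fine and makes the proof self-contained.
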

 
 \begin{proof}
 Write $\mathcal{V}_f = \{v_1, v_2\}$ and $\mathcal{C}_f = \{c_1,c_2\}$.  All bicritical rational maps satisfy $c_1 \neq c_2$ and $v_1 \neq v_2$ (\cite{MilnorBicritical}).  Suppose, for a contradiction, that  $c_1 = v_j$ for $j = 1$ or $2$.  Then $v_j = f(c_1) = f(v_1) = f(v_2)$. But since $c_1$ maps forward with local degree $2$, we see that $v_j$ has at least three preimages (counting multiplicity) under $f$ which is impossible since $f$ is quadratic. 
 \end{proof}
 
%
%
%
 
 \begin{lemma}\label{l:quadraticsDeckGroupPossibilities}
  Let $f$ be a quadratic rational map. Then $\Deck(f^2) \cong \mathbb{Z}_4$ if and only if $f$ is a power map.
 \end{lemma}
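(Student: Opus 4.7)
I would prove both directions by reducing to direct computations in normalized coordinates.

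For the forward direction, if $f$ is a power map then $f$ is conjugate to $z \mapsto z^{\pm 2}$, and in either case $f^2$ is conjugate to $z \mapsto z^4$. Since deck groups are invariant under conjugacy (Proposition~\ref{p:elementaryfacts}), it suffices to observe that $\Deck(z\mapsto z^4)$ consists of the M\"obius transformations $\tau$ satisfying $\tau(z)^4 = z^4$, i.e., $\tau(z)=\zeta z$ with $\zeta^4 = 1$, which gives $\mathbb{Z}_4$.

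For the converse, I would suppose $\Deck(f^2) = \langle \tau \rangle \cong \mathbb{Z}_4$ and proceed as follows. By Lemma~\ref{l:elliptic} and Proposition~\ref{p:elementaryfacts}(vi), $\Deck(f) \cong \mathbb{Z}_2$; denoting its generator by $\phi$, Proposition~\ref{p:elementaryfacts}(ii) places $\phi$ inside $\Deck(f^2)$, and since $\mathbb{Z}_4$ has a unique involution I obtain $\phi = \tau^2$. By the argument in Lemma~\ref{Klein4lemma}, $\phi$ fixes the two critical points of $f$ pointwise, so after conjugating I would arrange $\mathcal{C}_f = \{0,\infty\}$, giving $\phi(z) = -z$ and $f(z) = (az^2+b)/(cz^2+d)$ with $ad - bc \neq 0$.

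Next I would pin down $\tau$ itself. Since $\tau$ commutes with $\phi$, it preserves $\mathrm{Fix}(\phi) = \{0,\infty\}$ setwise; a swap would force $\tau(z) = c/z$ and hence $\tau^2 = \mathrm{id} \neq \phi$, so $\tau$ must fix $0$ and $\infty$ pointwise. This gives $\tau(z) = \lambda z$ with $\lambda^2 = -1$, so, without loss of generality, $\tau(z) = iz$. The membership $\tau \in \Deck(f^2)$ then translates to $f(iz)$ and $f(z)$ lying in a common $f$-fiber, so analyticity forces either $f(iz) = f(z)$ for all $z$, or $f(iz) = \phi(f(z)) = -f(z)$ for all $z$.

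The final step is a direct algebraic check using $f(iz) = (-az^2+b)/(-cz^2+d)$. The first case rearranges to $(bc-ad)z^2 \equiv 0$, contradicting $ad - bc \neq 0$. The second forces both $ac = 0$ and $bd = 0$, which combined with $ad - bc \neq 0$ leaves only the subcases $f(z) = (a/d) z^2$ or $f(z) = (b/c) z^{-2}$, each of which is manifestly a power map. The main hurdle is the initial extraction of the normal form $\tau(z) = iz$; once that is in place, the computation that closes the argument is short and routine.
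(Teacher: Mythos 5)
Your proof is correct, and both directions go through. The forward direction is routine in either treatment. For the converse, though, your route diverges from the paper's: you conjugate so that $\mathcal{C}_f = \{0,\infty\}$, pin down the order-$4$ generator in normal form as $\tau(z)=iz$, use the dichotomy $f(iz)=f(z)$ or $f(iz)=-f(z)$ (justified by the fact that fibers of $f$ are $\Deck(f)$-orbits plus the identity principle), and then finish with an explicit algebraic computation on $f(z)=(az^2+b)/(cz^2+d)$. The paper instead keeps things coordinate-free: from $\mathrm{Fix}(\phi)=\mathcal{C}_f$ it invokes Lemma~\ref{Tomlemma} to produce a M\"obius $\mu$ with $\mu\circ f = f\circ\phi$ and $\mu(\mathcal{V}_f)=\mathcal{V}_f$, deduces via the argument of Lemma~\ref{l:minimalk} that $\mu$ is the nontrivial element of $\Deck(f)$ with $\mathrm{Fix}(\mu)=\mathcal{C}_f$, and then observes $v_i = f(c_i) = f\circ\phi(c_i)=\mu\circ f(c_i)=\mu(v_i)$, so $\mathcal{V}_f\subseteq\mathrm{Fix}(\mu)=\mathcal{C}_f$, whence $\mathcal{V}_f=\mathcal{C}_f$ and $f$ is a power map. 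Your computational version is self-contained and elementary, requiring none of the lifting machinery; the paper's structural version is terser once Lemma~\ref{Tomlemma} is available and, importantly, the same pattern is reused immediately afterward (e.g., in Lemma~\ref{l:powermaps} and Proposition~\ref{p:dihedralimpliescoalescing}), so the abstraction pays for itself across the section.
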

 
 \begin{proof}
  It is clear that if $f$ is a power map then $\Deck(f^2) \cong \mathbb{Z}_4$. So suppose $\Deck(f^2) \cong \mathbb{Z}_4$, and let $\phi \in \Deck(f^2)$ have order $4$. We see that $\mathrm{Fix}(\phi) = \mathcal{C}_f = \{c_1,c_2\}$, and so by Lemma~\ref{Tomlemma} there exists $\mu$ such that $\mu \circ f = f \circ \phi$, and $\mu(\mathcal{V}_f) = \mathcal{V}_f$. As in the argument of Lemma~\ref{l:minimalk}, we see that $\mu \in \Deck(f)$, and since $\phi$ is not an element of $\Deck(f)$, it follows that $\mu$ is the unique order $2$ element of $\Deck(f)$. In particular, $\mathrm{Fix}(\mu) = \mathcal{C}_f$. Denoting $v_i = f(c_i)$, we observe that
  \[
   v_i = f(c_i) = f \circ \phi (c_i) = \mu \circ f (c_i) = \mu(v_i)
  \]
and so $\mathcal{C}_f = \mathrm{Fix}(\mu) = \mathcal{V}_f$. Thus $f$ is a power map. 
 \end{proof}

 It follows that if $f$ is not a power map, then $\Deck(f^2)$ must be isomorphic to either $\mathbb{Z}_2$ or $V_4$. In either case, every non-identity element of $\Deck(f^2)$ has order $2$.
 
Before proceeding, we need a well-known result about commuting M\"obius transformations (see e.g. \cite{Beardon:Groups}, Theorem 4.3.6).

\begin{lemma}\label{l:Beardon}
 Let $\phi$ and $\psi$ be non-identity M\"obius transformations with fixed point sets $\mathrm{Fix}(\phi)$ and $\mathrm{Fix}(\psi)$ respectively. Then the following are equivalent.
 \begin{enumerate}
  \item $\phi \circ \psi = \psi \circ \phi$
  \item $\phi(\mathrm{Fix}(\psi)) = \mathrm{Fix}(\psi)$ and $\psi(\mathrm{Fix}(\phi)) = \mathrm{Fix}(\phi)$.
  \item Either $\mathrm{Fix}(\psi) =\mathrm{Fix}(\phi)$ or $\phi$, $\psi$ and $\phi \circ \psi$ are involutions and $\mathrm{Fix}(\phi) \cap \mathrm{Fix}(\psi) = \varnothing$. 
 \end{enumerate}
 \end{lemma}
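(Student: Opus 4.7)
The plan is to prove the equivalence via the cycle $(1) \Rightarrow (2) \Rightarrow (3) \Rightarrow (1)$, relying throughout on the two elementary facts that a non-identity Möbius transformation has exactly one or two fixed points, and that any Möbius transformation fixing three or more points is the identity.

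For $(1) \Rightarrow (2)$ I would use the standard dynamical observation that commuting maps permute each other's fixed-point sets: if $p \in \mathrm{Fix}(\psi)$, then $\psi(\phi(p)) = \phi(\psi(p)) = \phi(p)$, so $\phi(p) \in \mathrm{Fix}(\psi)$, and since $\mathrm{Fix}(\psi)$ is finite and $\phi$ injective, $\phi$ restricts to a bijection on it. The implication $(3) \Rightarrow (1)$ is equally short: in the first case of (3), conjugate so that the common fixed-point set is $\{0, \infty\}$ or $\{\infty\}$, reducing both $\phi$ and $\psi$ to the form $z \mapsto \lambda z$ or $z \mapsto z + c$, which manifestly commute; in the second case, the three involutive identities $\phi^{-1} = \phi$, $\psi^{-1} = \psi$, $(\phi\psi)^{-1} = \phi\psi$ give $\phi\psi = (\phi\psi)^{-1} = \psi^{-1}\phi^{-1} = \psi\phi$.

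The crux is $(2) \Rightarrow (3)$. Assuming (2) and $\mathrm{Fix}(\phi) \neq \mathrm{Fix}(\psi)$, I would first rule out any partial overlap: a single shared fixed point $p$, combined with the fact that $\phi$ permutes $\mathrm{Fix}(\psi)$ and $\psi$ permutes $\mathrm{Fix}(\phi)$, forces any second fixed point of one map to be fixed by the other (via injectivity of Möbius maps), collapsing the two sets to be equal; the purely parabolic case in which one set has only one element is eliminated in the same way. The remaining possibility is $\mathrm{Fix}(\phi) = \{a, b\}$ and $\mathrm{Fix}(\psi) = \{p, q\}$ disjoint, with $\phi$ swapping $p \leftrightarrow q$ and $\psi$ swapping $a \leftrightarrow b$. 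Then $\phi^2$ fixes all four of $a, b, p, q$, so $\phi^2 = \mathrm{id}$; similarly $\psi^2 = \mathrm{id}$; and a direct evaluation on $\{a, b, p, q\}$ shows $\phi \circ \psi$ also swaps $a \leftrightarrow b$ and $p \leftrightarrow q$, so $(\phi\psi)^2$ fixes these four points and $\phi \circ \psi$ is an involution, landing us in the second case of (3).

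The main obstacle is the case analysis in $(2) \Rightarrow (3)$: systematically eliminating the partial-overlap and one-point (parabolic) subcases purely from the injectivity of Möbius maps, and then identifying the involutive structure in the remaining disjoint case. Once those cases are dispatched, the rest of the argument is routine bookkeeping.
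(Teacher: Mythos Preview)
The paper does not actually prove this lemma; it is stated as a well-known result with a citation to Beardon's book (\cite{Beardon:Groups}, Theorem 4.3.6), so there is no in-paper argument to compare against. Your proposed cycle $(1)\Rightarrow(2)\Rightarrow(3)\Rightarrow(1)$ is the standard one and is correct.

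One small expository point in the $(2)\Rightarrow(3)$ step: after disposing of the partial-overlap case you pass directly to ``the remaining possibility'' of two disjoint two-element fixed-point sets, but you have not explicitly excluded the disjoint cases in which one (or both) of the maps is parabolic. This is trivial to fill in and is really already implicit in your argument: if $\mathrm{Fix}(\psi)=\{p\}$ is a singleton, then $\phi$ permutes $\{p\}$, so $\phi(p)=p$, forcing $p\in\mathrm{Fix}(\phi)\cap\mathrm{Fix}(\psi)$ and landing you back in the overlap case already handled. Thus under hypothesis (2) the fixed-point sets can be disjoint only if both have two elements, and your swap analysis then goes through as written. A half-sentence making this explicit would make the case analysis airtight.
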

 
 \begin{proposition}\label{p:characterizecritcoal}
 Let $f$ be a quadratic rational map that is not a power map.  Then the following are equivalent:
 \begin{enumerate}
 \item \label{i:2ndV4} $\textrm{Deck}(f^2) \cong V_4$,
 \item \label{i:kthV4} $\textrm{Deck}(f^k) \cong V_4$ for some $k \in \mathbb{N}$,
 \item  \label{i:critcoal} $f$ is critically coalescing. 
 \end{enumerate}
 \end{proposition}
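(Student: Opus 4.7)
The plan is to establish the cycle $(\ref{i:2ndV4}) \Rightarrow (\ref{i:kthV4}) \Rightarrow (\ref{i:2ndV4}) \Rightarrow (\ref{i:critcoal}) \Rightarrow (\ref{i:2ndV4})$. The first implication is trivial (take $k=2$), and the second follows from Lemma~\ref{l:minimalk}, which pins the minimal exponent with $V_4$ deck group at $2$. The substantive content is the equivalence $(\ref{i:2ndV4}) \Leftrightarrow (\ref{i:critcoal})$, which I treat in the next two paragraphs.

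To prove $(\ref{i:2ndV4}) \Rightarrow (\ref{i:critcoal})$, I write $\Deck(f^2) = \{\mathrm{id}, \phi, \psi, \eta\}$ with $\phi$ generating $\Deck(f)$. By Lemma~\ref{Klein4lemma}, $\psi$ swaps $c_1$ and $c_2$. Lemma~\ref{Tomlemma} then produces a unique M\"obius transformation $\nu$ with $\nu \circ f = f \circ \psi$, and the commutative-diagram argument used in the proof of Lemma~\ref{l:minimalk} places $\nu$ in $\Deck(f)$. Since $\psi \notin \Deck(f)$, one cannot have $\nu = \mathrm{id}$, so $\nu = \phi$. Evaluating $\phi \circ f = f \circ \psi$ at $c_1$ gives $\phi(v_1) = f(c_2) = v_2$, and then applying $f \circ \phi = f$ to $v_1$ gives $f(v_2) = f(\phi(v_1)) = f(v_1)$, so $f$ is critically coalescing.

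For $(\ref{i:critcoal}) \Rightarrow (\ref{i:2ndV4})$, set $\beta = f(v_1) = f(v_2)$. A quick multiplicity count together with Lemma~\ref{l:criticallyCoalescing4DistinctPts} shows $\beta \notin \mathcal{V}_f$, so $f^{-1}(\beta) = \{v_1, v_2\}$ consists of two distinct noncritical points. The deck transformation $\phi$ permutes this fiber and cannot fix both $v_i$, since otherwise $\phi$ would have at least three distinct fixed points among the four distinct points of $\mathcal{V}_f \cup \mathcal{C}_f$. Hence $\phi$ swaps $v_1$ and $v_2$ and in particular preserves $\mathcal{V}_f$. It follows that $f$ and $\phi \circ f$ are degree-$2$ rational maps with the same critical values, so uniqueness of degree-$2$ branched covers of $\hat{\mathbb{C}}$ with prescribed branch locus furnishes a M\"obius transformation $\tau$ with $f \circ \tau = \phi \circ f$. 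Then $f^2 \circ \tau = f \circ (\phi \circ f) = (f \circ \phi) \circ f = f^2$, so $\tau \in \Deck(f^2)$, and $\tau \notin \Deck(f)$ (otherwise $\phi$ would be the identity). The four elements $\mathrm{id}, \phi, \tau, \phi\tau$ are pairwise distinct, so $|\Deck(f^2)| \geq 4$; combined with $|\Deck(f^2)| \leq \deg(f^2) = 4$ from Proposition~\ref{p:elementaryfacts} and the exclusion of $\mathbb{Z}_4$ via Lemma~\ref{l:quadraticsDeckGroupPossibilities} (since $f$ is not a power map), this forces $\Deck(f^2) \cong V_4$.

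The main obstacle will be producing the M\"obius lift $\tau$ in the direction $(\ref{i:critcoal}) \Rightarrow (\ref{i:2ndV4})$: the key input is that the critically coalescing hypothesis is precisely what makes $\phi$ permute the critical values of $f$, after which either an abstract uniqueness statement for double covers of $\hat{\mathbb{C}}$ with prescribed branching, or an explicit calculation in the normal form $f(z) = (\alpha z^2 + \beta)/(\gamma z^2 + \delta)$ (with coalescing condition $\alpha\delta + \beta\gamma = 0$), supplies $\tau$ as a concrete M\"obius transformation. Everything else amounts to bookkeeping of orders and element counts.
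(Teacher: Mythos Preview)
Your proof is correct. For the equivalence $(\ref{i:2ndV4})\Leftrightarrow(\ref{i:kthV4})$ you match the paper exactly, and for $(\ref{i:2ndV4})\Rightarrow(\ref{i:critcoal})$ your argument via Lemma~\ref{Klein4lemma} and Lemma~\ref{Tomlemma} is essentially a repackaging of the paper's use of commutativity in $V_4$ and Lemma~\ref{l:Beardon} (one small remark: Lemma~\ref{Klein4lemma} as stated only says $\psi(\mathcal{C}_f)=\mathcal{C}_f$; that $\psi$ genuinely swaps $c_1,c_2$ then follows since an involution fixing both would coincide with $\phi$).

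The genuine divergence is in $(\ref{i:critcoal})\Rightarrow(\ref{i:2ndV4})$. The paper normalises to the one-parameter family $f_a(z)=(z^2-a)/(z^2+a)$ and simply exhibits the two extra deck transformations $z\mapsto a/z$ and $z\mapsto -a/z$ of $f_a^2$. Your route is coordinate-free: you first show that the generator $\phi$ of $\Deck(f)$ swaps the critical values (using that $f^{-1}(\beta)=\{v_1,v_2\}$ and a fixed-point count against Lemma~\ref{l:criticallyCoalescing4DistinctPts}), then invoke uniqueness of degree-$2$ branched covers with prescribed branch locus to manufacture $\tau$ with $f\circ\tau=\phi\circ f$, and finally count to four and rule out $\mathbb{Z}_4$ via Lemma~\ref{l:quadraticsDeckGroupPossibilities}. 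The paper's approach is shorter and gives explicit generators (which it reuses later, e.g.\ in Proposition~\ref{p:critcoaldecks}); yours has the virtue of explaining \emph{why} the extra deck transformation must exist and would adapt more readily to settings where a convenient normal form is unavailable.
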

 
 \begin{proof}
  Lemma \ref{l:minimalk} gives \eqref{i:2ndV4} if and only if \eqref{i:kthV4}.  We will prove the equivalence of conditions  \eqref{i:2ndV4} and \eqref{i:critcoal}.
 Suppose $\textrm{Deck}(f^2) \cong V_4$. Then $\textrm{Deck}(f)$ is a cyclic group of order $2$ (by Lemma \ref{l:minimalk}) generated by a rotation $\mu$ about the axis connecting the critical points of $f$ (by Lemma \ref{l:elliptic}). Consider any element $\tau \in \textrm{Deck}(f^2) \setminus \textrm{Deck}(f)$. We claim that the set $\mathcal{C}_f$ must be fixed by $\tau$. Indeed, since $V_4$ is abelian, we see that $\tau$ commutes with $\mu$, the unique non-identity element of $\Deck(f)$. Since $\mathrm{Fix}(\mu) = \mathcal{C}_f$, it follows from Lemma~\ref{l:Beardon} that $\tau(\mathcal{C}_f) = \mathcal{C}_f$.
 
 
 If $\tau$ fixed $\mathcal{C}_f$ pointwise, then $\tau$ would coincide with the generator of $\textrm{Deck}(f)$, a contradiction.  Hence $\tau$ must interchange the two points of $\mathcal{C}_f$.  Then Proposition \ref{p:elementaryfacts} part \eqref{i:DeckGroupFixesFibers} implies the two critical points of $f$ belong to the same fiber under $f^2$, i.e. $f$ is critically coalescing.
 
Now suppose $f$ is critically coalescing. By Lemma \ref{l:criticallyCoalescing4DistinctPts}, the critical points and values of $f$, which we will denote $c_1,c_2$ and $v_1,v_2$ respectively, are all distinct.  We may therefore normalize $f$ so that $c_1=0$, $c_2 = \infty$ and $v_2 = 1$, which means that $f$ belongs to the one-parameter family given by 
\begin{equation}\label{e:CritCoalfam}
f_a(z) = \frac{z^2-a}{z^2+a}.
\end{equation}
As the reader may verify, the maps $z \mapsto -z$ and $z \mapsto \tfrac{a}{z}$ belong to $\textrm{Deck}(f^2)$ and they generate a subgroup isomorphic to $V_4$.  Therefore, by Lemma \ref{l:quadraticsDeckGroupPossibilities} and Proposition \ref{p:elementaryfacts} part \eqref{i:possibilities}, $\textrm{Deck}(f^2) \cong V_4$.\end{proof}

\begin{proposition}\label{p:specialpairs2}
Let $f$ be a critically coalescing quadratic rational map with critical points $c_1$ and $c_2$. Then the special pairs of $f$ are the sets $\mathcal{C}_f$, $f^{-1}(c_1)$ and $f^{-1}(c_2)$.
\end{proposition}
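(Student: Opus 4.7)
The plan is to identify the three special pairs of $\Deck(f^2)$ as the fixed point sets of its three non-identity involutions. Proposition~\ref{p:characterizecritcoal} gives $\Deck(f^2) \cong V_4$, so we may write $\Deck(f^2) = \{\mathrm{id}, \phi, \psi, \eta\}$ with $\Deck(f) = \{\mathrm{id}, \phi\}$. Lemma~\ref{Klein4lemma} immediately gives $\mathrm{Fix}(\phi) = \mathcal{C}_f$, which accounts for the first special pair.

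Next I would analyze $\psi$ (the argument for $\eta$ is identical). By the second case in the proof of Lemma~\ref{Klein4lemma}, the involution $\psi$ swaps $c_1$ and $c_2$, so in particular $\psi(\mathcal{C}_f) = \mathcal{C}_f$. Lemma~\ref{Tomlemma} then produces a unique M\"obius transformation $\nu$ with $\nu \circ f = f \circ \psi$. Using the commutative diagram appearing after Lemma~\ref{Tomlemma} (applied to $k=2$), this $\nu$ belongs to $\Deck(f) = \{\mathrm{id}, \phi\}$. If $\nu = \mathrm{id}$, then $f \circ \psi = f$ and $\psi \in \Deck(f)$, contradicting $\psi \neq \phi$. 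Hence $\nu = \phi$, and we obtain the intertwining relation
\[
f \circ \psi = \phi \circ f.
\]

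Since $\phi$ fixes $c_1$ and $c_2$ pointwise, this relation gives $\psi(f^{-1}(c_i)) = f^{-1}(\phi(c_i)) = f^{-1}(c_i)$ for $i = 1, 2$. By Lemma~\ref{l:criticallyCoalescing4DistinctPts}, the critical points of $f$ are not critical values, so each fiber $f^{-1}(c_i)$ consists of exactly two points. The involution $\psi$ restricted to a two-element set either fixes both points or swaps them, and since $\psi$ has exactly two fixed points in $\hat{\mathbb{C}}$, exactly one of the two fibers is fixed pointwise while the other is swapped. Therefore $\mathrm{Fix}(\psi) \in \{f^{-1}(c_1), f^{-1}(c_2)\}$.

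Applying the same argument to $\eta$ yields $\mathrm{Fix}(\eta) \in \{f^{-1}(c_1), f^{-1}(c_2)\}$. Because $\psi$ and $\eta$ are distinct M\"obius involutions, they have distinct fixed point sets, so $\{\mathrm{Fix}(\psi), \mathrm{Fix}(\eta)\} = \{f^{-1}(c_1), f^{-1}(c_2)\}$. Combined with $\mathrm{Fix}(\phi) = \mathcal{C}_f$, this identifies the three special pairs as claimed. The only real content is the translation of ``$\psi$ swaps $c_1, c_2$'' into the relation $f \circ \psi = \phi \circ f$; since the required diagram chase is already carried out in the proof of Lemma~\ref{l:minimalk}, I expect the proof to be a short direct application of the machinery developed earlier in this section.
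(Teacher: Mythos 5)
Your route to the key fact---that each of $\psi, \eta$ preserves the sets $f^{-1}(c_1)$ and $f^{-1}(c_2)$---is a genuinely different one from the paper's. The paper observes directly that $f^{-1}(c_i) = f^{-2}(v_i)$ is a fiber of $f^2$, so it is preserved by all of $\Deck(f^2)$, and then applies the Orbit--Stabilizer Theorem to the orbit of a point of that fiber to produce a non-identity element fixing it. You instead go through Lemma~\ref{Tomlemma} and the commutative diagram to derive the intertwining relation $f \circ \psi = \phi \circ f$, from which preservation of the fibers of $f$ over $c_1, c_2$ follows. That derivation is correct, though it is arguably more machinery than needed, since fiber-preservation under $f^2$ already gives what you want.

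The gap is in the sentence ``since $\psi$ has exactly two fixed points in $\hat{\mathbb{C}}$, exactly one of the two fibers is fixed pointwise while the other is swapped.'' Knowing that $\psi$ permutes each two-element fiber and has only two fixed points rules out $\psi$ fixing \emph{both} fibers pointwise, but it does not rule out $\psi$ swapping \emph{both} fibers, with $\mathrm{Fix}(\psi)$ landing on two other points entirely. You need an extra input to exclude this. One way: note that $\phi \in \Deck(f)$ also preserves each fiber $f^{-1}(c_i)$, and since $\mathrm{Fix}(\phi) = \mathcal{C}_f$ is disjoint from $f^{-1}(c_1) \cup f^{-1}(c_2)$ (by Lemma~\ref{l:criticallyCoalescing4DistinctPts}), $\phi$ must \emph{swap} the two points of each fiber. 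Then, because $\psi \circ \eta = \phi$ in $V_4$, exactly one of $\psi, \eta$ fixes $f^{-1}(c_1)$ pointwise and the other swaps it; likewise for $f^{-1}(c_2)$; and since each of $\psi, \eta$ has only two fixed points and the two fibers are disjoint, the two fibers must be distributed one to each. That closes the gap. Alternatively, the paper's Orbit--Stabilizer argument reaches the same conclusion in one step: the $\Deck(f^2)$-orbit of $a_1 \in f^{-1}(c_1)$ has at most two elements, so its stabilizer has order at least two, yielding a non-identity $\nu$ with $\nu(a_1)=a_1$, hence $\nu(a_2)=a_2$, hence $\mathrm{Fix}(\nu)=f^{-1}(c_1)$.
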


\begin{proof}
Denote $f^{-1}(c_1) = \{a_1,a_2\}$, $f^{-1}(c_2) = \{b_1,b_2\}$. From Proposition~\ref{p:characterizecritcoal}, we have $\Deck(f^2) \cong V_4$. We know that the elements of $\mathcal{C}_f$ are fixed by the unique non-identity element $\mu \in \Deck(f) \subset \Deck(f^2)$. Now observe that $f^{-2}(v_1) = \{a_1,a_2\}$. Since elements of $\Deck(f^2)$ preserve the fibers under $f^2$, we see that orbit of $a_1$ under the action of $\Deck(f^2)$ contains at most two elements. By the Orbit-Stabilizer Theorem, the stabilizer of $a_1$ must contain a non-identity element $\nu$ of $\Deck(f^k)$. In this case we must have $\nu(a_2) = a_2$, so the fixed points of $\nu$ are precisely the elements of $f^{-1}(c_1)$. The case for the elements of $f^{-1}(c_2)$ is similar.  
\end{proof}
    
We complete this subsection by showing that if $f$ is a quadratic rational map such that $\Deck(f^k)$ is dihedral for some $k$, then $f$ is critically coalescing. First we strengthen the result of Lemma~\ref{l:quadraticsDeckGroupPossibilities}.

\begin{lemma}\label{l:powermaps}
Let $f$ be a quadratic rational map. Then $\Deck(f^k)$ is a cyclic group of order greater than $2$ if and only if $f$ is a power map.
\end{lemma}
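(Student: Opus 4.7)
The reverse direction is immediate: if $f$ is a power map, then after conjugation $f(z)=z^2$ or $f(z)=1/z^2$, and in either case $f^k$ is conjugate to $z\mapsto z^{2^k}$, whose deck group is $\{z\mapsto \zeta z:\zeta^{2^k}=1\}\cong \mathbb{Z}_{2^k}$, cyclic of order $2^k>2$ for $k\geq 2$.

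For the forward direction, suppose $\Deck(f^k)$ is cyclic of order greater than $2$. By Proposition~\ref{p:primeorders}, $|\Deck(f^k)|=2^m$ for some $m\geq 2$. The key move is to choose $\psi\in\Deck(f^k)$ of order \emph{exactly} $4$ rather than a generator (which may have higher order). By Lemma~\ref{l:elliptic} the order-$2$ rotation $\mu_0$ about the axis through $\mathcal{C}_f$ lies in $\Deck(f)\subseteq\Deck(f^k)$, and by Lemma~\ref{l:cyclic} every non-identity element of a finite cyclic group of M\"obius transformations shares its fixed point set with $\mu_0$; hence $\psi$ fixes $\mathcal{C}_f$ pointwise. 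Since a cyclic group of order $2^m\geq 4$ has a unique element of order $2$, we conclude $\psi^2=\mu_0$.

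Next I apply Lemma~\ref{Tomlemma} to $\psi$ (legal because $\psi(\mathcal{C}_f)=\mathcal{C}_f$) to obtain a M\"obius transformation $\nu$ with $\nu\circ f=f\circ\psi$; the commutative diagram following Lemma~\ref{Tomlemma} identifies $\nu\in\Deck(f^{k-1})\subseteq\Deck(f^k)$. Squaring gives $\nu^2\circ f=f\circ\psi^2=f\circ\mu_0=f$, so $\nu^2=\mathrm{id}$. Moreover $\nu\neq\mathrm{id}$: otherwise $\psi\in\Deck(f)$, contradicting $|\Deck(f)|\leq 2$ from Proposition~\ref{p:elementaryfacts}\eqref{i:orderbound}. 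Thus $\nu$ is an order-$2$ element of the cyclic group $\Deck(f^k)$, and by uniqueness must equal $\mu_0$; in particular $\nu$ fixes $\mathcal{C}_f$ pointwise.

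The finish is a one-line calculation. For each critical point $c_i$ of $f$,
\[
\nu(v_i)=\nu(f(c_i))=f(\psi(c_i))=f(c_i)=v_i,
\]
using that $\psi$ fixes $\mathcal{C}_f$ pointwise, so $\nu$ fixes $\mathcal{V}_f$ pointwise as well. Since $\nu\neq\mathrm{id}$ is a M\"obius transformation with at most two fixed points, while $c_1\neq c_2$ and $v_1\neq v_2$, we must have $\mathcal{C}_f=\mathcal{V}_f$, forcing $f$ to be a power map. The main subtlety lies in coordinating two a priori independent determinations of $\nu$: the cyclic structure of $\Deck(f^k)$ forces $\nu=\mu_0$, while the Lemma~\ref{Tomlemma} construction forces $\nu$ to fix the critical values pointwise, and it is precisely the collision of these two constraints that forces $\mathcal{C}_f=\mathcal{V}_f$.
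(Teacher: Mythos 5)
Your proof is correct, and while it shares the paper's broad strategy (use Lemma~\ref{Tomlemma} to descend from $\Deck(f^k)$ and conclude the critical points and critical values coincide), it diverges at two decisive points. First, the paper takes $k$ minimal, lets $\nu$ be a \emph{generator} of $\Deck(f^k)$, and applies Lemma~\ref{Tomlemma} iteratively down the tower $\Deck(f^k) \supseteq \Deck(f^{k-1}) \supseteq \cdots \supseteq \Deck(f)$ to land an involution $\mu = \nu^{2^{n-1}}$ in $\Deck(f)$; you instead pick $\psi$ of order \emph{exactly} $4$ and apply Lemma~\ref{Tomlemma} once, then pin $\nu$ down as the unique involution of the cyclic group via $\nu^2 \circ f = f \circ \psi^2 = f \circ \mu_0 = f$. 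This sidesteps both the minimality hypothesis and the induction. Second, and more substantively, the paper concludes that $\mu$ fixes $\mathcal{V}_f$ pointwise by contradiction: if $\mu$ swapped the critical values then $f$ would be critically coalescing, so $\Deck(f^k)$ would contain a $V_4$ by Proposition~\ref{p:characterizecritcoal}, contradicting cyclicity. You obtain the same conclusion by the direct one-line computation $\nu(v_i) = f(\psi(c_i)) = f(c_i) = v_i$, which does not invoke the critically coalescing machinery at all. Your route thus trades the dependency on Proposition~\ref{p:characterizecritcoal} (and its $V_4$ classification) for a sharper exploitation of the group structure via the order-$4$ element; the paper's route is slightly more self-contained in not having to locate a specific element of order $4$, but carries a heavier logical dependency. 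Both are valid; yours is the more economical of the two.
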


\begin{proof}
It is clear that if $f$ is a power map then $\Deck(f^k)$ is a cyclic group of order $2^k$. Let $n \geq 2$, and suppose that $k$ is minimal such that $\Deck(f^k)$ is a cyclic group of order $2^n$. Let $\nu$ be a generator of $\Deck(f^k)$. Then $\mu = \nu^{2^{n-1}}$ has order $2$ and so belongs to $\Deck(f)$. In particular we have $\mathrm{Fix}(\phi) = \mathcal{C}_f$ for all non-identity elements in $\Deck(f^k)$. Thus we may inductively apply the argument following Lemma~\ref{Tomlemma} to see that the following diagram commutes.
\begin{center}
 \begin{tikzcd} 
 \hat{\mathbb{C}} \arrow[r, "\nu"] \arrow[d, "f^{k-1}" ']
   &   \hat{\mathbb{C}} \arrow[d, "f^{k-1}" ] \\
     \hat{\mathbb{C}} \arrow[r, "\mu"] \arrow[d, "f" ']
   &   \hat{\mathbb{C}} \arrow[d, "f" ] \\
   \hat{\mathbb{C}} \arrow[r, "\mathrm{id}" ]
&   \hat{\mathbb{C}} \end{tikzcd}
\end{center}
Furthermore, it follows that $\mu$ must map $\mathcal{V}_f$ to itself. Let $\mathcal{V}_f = \{v_1, v_2 \}$. If $\mu$ interchanges the elements of $\mathcal{V}_f$, then we would have $f(v_1) = f(v_2)$, and so by Proposition~\ref{p:characterizecritcoal}, $\Deck(f^k)$ would contain a subgroup isomorphic to $V_4$; a contradiction. So we see that $\mu$ must fix the elements of $\mathcal{V}_f$ pointwise. Since $\mu \in \Deck(f)$, the fixed points of $\mu$ are also the critical points of $f$. Hence $\mathcal{C}_f = \mathcal{V}_f$, and $f$ is a power map.
\end{proof}

%

The following strengthens the result of Lemma~\ref{l:minimalk}.

\begin{proposition}\label{p:dihedralimpliescoalescing}
Let $f$ be a quadratic rational map. Then if $\Deck(f^k)$ is dihedral for some $k$, then $f$ is critically coalescing.
\end{proposition}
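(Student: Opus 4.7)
The plan is to show that $\Deck(f^2)$ itself must already be dihedral, after which the centralizer-of-$\phi$ argument from the proof of Proposition~\ref{p:characterizecritcoal} finishes the job. First I would rule out the possibility that $f$ is a power map: by Lemma~\ref{l:powermaps} power maps have cyclic deck groups at every iterate, contradicting the dihedral hypothesis. Combined with Theorem~\ref{t:cyclicOrDihedral} and Lemma~\ref{l:powermaps}, this forces every $\Deck(f^j)$ to be either $\Deck(f) = \{\mathrm{id}, \phi\}$ or a dihedral group of order at least $4$. Let $m$ be the smallest index at which $\Deck(f^m)$ is dihedral, so $2 \leq m \leq k$ and $\Deck(f^j) = \{\mathrm{id}, \phi\}$ for all $1 \leq j < m$.

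The crux is to prove $m = 2$. Assume toward a contradiction that $m \geq 3$ and write $\Deck(f^m) \cong D_{2n}$; by Proposition~\ref{p:primeorders} $n$ must be a power of $2$, so $n$ is even and $r^{n/2}$ is a central involution. Either $\phi$ equals the central involution $r^{n/2}$ or $\phi$ is a reflection, and in either case a direct check in $D_{2n}$ shows that the centralizer of $\phi$ contains a Klein four subgroup. Thus I can pick an involution $\psi \in \Deck(f^m)$ with $\psi \phi = \phi \psi$ and $\psi \notin \{\mathrm{id}, \phi\}$. Lemma~\ref{l:Beardon} precludes $\mathrm{Fix}(\psi) = \mathrm{Fix}(\phi) = \mathcal{C}_f$ (that would force $\psi = \phi$), so $\psi$ must interchange $c_1$ and $c_2$.

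Since $\psi(\mathcal{C}_f) = \mathcal{C}_f$, Lemma~\ref{Tomlemma} and the commutative diagram following it produce a unique $\sigma \in \Deck(f^{m-1}) = \{\mathrm{id}, \phi\}$ satisfying $\sigma \circ f = f \circ \psi$. Evaluating at $c_1$ yields $\sigma(v_1) = f(\psi(c_1)) = f(c_2) = v_2$, which rules out $\sigma = \mathrm{id}$, so $\sigma = \phi$. Then $\phi \circ f = f \circ \psi$, and composing with $f$ on the left while using $f \circ \phi = f$ gives $f^2 \circ \psi = f \circ (\phi \circ f) = (f \circ \phi) \circ f = f^2$. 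Hence $\psi \in \Deck(f^2)$, but $m \geq 3$ forces $\Deck(f^2) = \{\mathrm{id}, \phi\}$, contradicting $\psi \notin \{\mathrm{id}, \phi\}$. Therefore $m = 2$.

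Finally, applying the same centralizer argument inside the dihedral group $\Deck(f^2)$ produces an involution $\psi \in \Deck(f^2) \setminus \{\mathrm{id}, \phi\}$ that swaps $c_1$ and $c_2$. Because $\psi$ preserves fibers of $f^2$ by Proposition~\ref{p:elementaryfacts}(\ref{i:DeckGroupFixesFibers}), $f^2(c_1) = f^2(\psi(c_1)) = f^2(c_2)$, equivalently $f(v_1) = f(v_2)$, so $f$ is critically coalescing. The hardest part will be the descent step in the $m \geq 3$ case: one has to marry the centralizer structure in the dihedral group with Lemma~\ref{Tomlemma} in order to push a ``new'' deck transformation at level $m$ all the way down to $\Deck(f^2)$, contradicting the minimality of $m$.
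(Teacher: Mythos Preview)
Your proof is correct and uses the same core mechanism as the paper: locate a Klein four subgroup of $\Deck(f^m)$ containing the generator $\phi$ of $\Deck(f)$, then use Lemma~\ref{Tomlemma} to push a nontrivial element down one level into the (cyclic, hence $\{\mathrm{id},\phi\}$) group $\Deck(f^{m-1})$. The paper is more direct, though: once you have established $\sigma=\phi$ with $\phi(v_1)=v_2$, the relation $\phi\in\Deck(f)$ already gives $f(v_1)=f(\phi(v_1))=f(v_2)$, so the detour proving $m=2$ and then rerunning the centralizer argument inside $\Deck(f^2)$ is unnecessary.
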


\begin{proof}
Write $\Deck(f) = \{ \mathrm{id}, \mu \}$ and suppose $k>1$ is minimal such that $\Deck(f^k)$ is dihedral. Let $\Gamma$ be a subgroup of $\Deck(f^k)$ such that $\Gamma \cong V_4$ and $\Deck(f) \subseteq \Gamma$. Write $\Gamma = \{ \mathrm{id}, \mu, \alpha, \beta \}$. Since $\Gamma$ is abelian and $\mathrm{Fix}(\mu) = \mathcal{C}_f$, then $\alpha(\mathcal{C}_f) = \beta(\mathcal{C}_f) = \mathcal{C}_f$. Thus, by Lemma~\ref{Tomlemma} and the subsequent discussion, there exists $\nu \in \Deck(f^{k-1})$ such that $\nu \circ f = f \circ \alpha$. Furthermore,  $\nu(\mathcal{V}_f ) = \mathcal{V}_f = \{v_1, v_2 \}$, and since $\alpha$ is not an element of $\Deck(f^2)$, we have $\nu \neq \mathrm{id}$ . By the assumption on the  minimality of $k$, $\Deck(f^{k-1})$ must be cyclic, and so for any non-identity elements $\gamma \in \Deck(f^{k-1})$ we have $\mathrm{Fix}(\gamma) = \mathrm{Fix}(\mu) = \mathcal{C}_f$. Thus $\mathrm{Fix}(\nu) = \mathcal{C}_f$.

If $\nu$ fixes the elements of $\mathcal{V}_f$ pointwise, then we have $\mathcal{C}_f = \mathcal{V}_f$, and so $f$ is a power map. But this is impossible, since $\Deck(f^k)$ is always cyclic for power maps. So $\nu$ must be an involution which exchanges the elements of $\mathcal{V}_f$. But since $\Deck(f^{k-1})$ is cyclic, it contains a unique involution, namely $\mu$, and so $\nu = \mu$. Thus $\mu \in \Deck(f)$ interchanges the elements of $\mathcal{V}_f$, and so $f(v_1) = f(v_2)$.
\end{proof}

 \subsection{Remarks on Critically Coalescing Quadratic Rational Maps}
  
Consider the family $f_a$ (see Figure~\ref{f:critcoal}) from equation \eqref{e:CritCoalfam}. The authors have not been able to find any reference to this family in the literature. Accordingly, we prove some preliminary results about this family here, and leave a more detailed investigation for future study. Note that the maps $f_a$ and $f_{-1/a}$ are conjugate via the map $\phi(z) \mapsto -1/z$. 
 \begin{figure}[ht]
\includegraphics[width=0.8\textwidth]{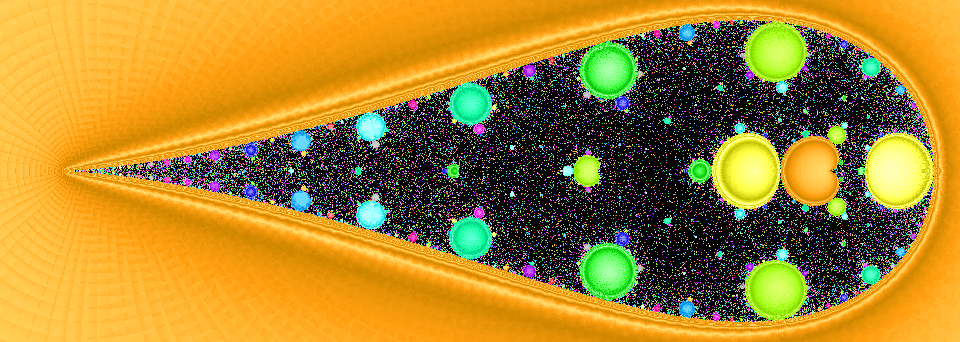}
\caption{The parameter space of the family $f_a(z) = \frac{z^2-a}{z^2+a}$, characterized by the equivalent conditions that $\Deck(f^2) \cong V_4$ and $f(v_1) = f(v_2)$. The colors of the hyperbolic components represent the period of the attracting orbit of the map. For example, orange is period $1$, yellow is period $2$.} 
\label{f:critcoal}
\end{figure} 

For the moment we show that for all $a \neq \pm 1$, we have $\Deck(f_a^k) \cong V_4$ for all $k \geq 2$. First we need a special case of a result of Pakovich (\cite{Sym}, Theorem 5.2). Given a rational map $F$, we define $\Deck_\infty(F) = \bigcup_{k=1}^{\infty} \Deck(F^k)$.

\begin{proposition}[\cite{Sym}]\label{p:Pakovich}
Let $F$ be a quadratic rational map. If $\sigma \in \Deck_\infty(F)$ then $F \circ \sigma = \beta \circ F \circ \beta^{-1}$ for some M\"obius map $\beta$.
\end{proposition}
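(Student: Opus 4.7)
The approach is induction on the minimal $k$ such that $\sigma \in \Deck(F^k)$. The base case $k=1$ is immediate: then $F \sigma = F$, so $\beta = \mathrm{id}$ works. For $k \geq 2$, the defining relation $F^k \sigma = F^k$ can be rewritten as
\[
F^{k-1} \circ (F \sigma) = F^k = F \circ F^{k-1},
\]
which exhibits $F^{k-1}$ as a semiconjugacy from the degree-$2$ map $F \sigma$ to the degree-$2$ map $F$. Equivalently, $F$ and $F\sigma$ are two degree-$2$ right composition factors of the single rational map $F^k$, sharing the common left factor $F^{k-1}$.

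The heart of the plan is to promote this semiconjugacy to a M\"obius conjugacy. The intended tool is a Ritt-style uniqueness principle for compositional decompositions: if $A \circ B_1 = A \circ B_2$ for rational maps with $\deg B_1 = \deg B_2$ and $\deg A$ large relative to that common degree, then $B_1$ and $B_2$ should differ by a M\"obius transformation $\beta$ drawn from the covering-symmetry structure of $A$. Applied with $A = F^{k-1}$, $B_1 = F$ and $B_2 = F \sigma$, this should produce $\beta \in \Deck(F^{k-1}) \subseteq \Deck(F^k)$ with $F \sigma = \beta \circ F \circ \beta^{-1}$. This expectation can be corroborated by direct computation in the critically coalescing family $f_a(z) = (z^2-a)/(z^2+a)$: for $\sigma(z) = a/z \in \Deck(f_a^2)$ one checks $f_a \circ \sigma = -f_a$, so $\beta(z) = -z \in \Deck(f_a)$ conjugates $f_a$ to $f_a \sigma$.

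The principal obstacle is making the Ritt-type step rigorous for rational (rather than polynomial) maps. Classical Ritt theory has exceptional cases involving power and Chebyshev maps, and the rational-function extension (developed by Pakovich in \cite{Sym}) must additionally contend with Latt\`es-type examples. In the quadratic setting of the proposition, the exceptional cases correspond precisely to power maps and to the special map $z \mapsto (z^2-1)/(z^2+1)$ isolated in Theorem~\ref{t:DetectQuadraticCritPts}(iii); in each case the conjugator $\beta$ can be written down by inspection (for instance, $\beta(z) = -1/z$ conjugates the latter map to $F \circ \sigma$ when $\sigma(z) = iz$), which, combined with the generic Ritt-type argument in the non-exceptional regime, completes the proof.
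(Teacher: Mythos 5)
The paper itself offers no proof of this statement: it is a direct quotation of Pakovich's Theorem~5.2 from \cite{Sym}, and the authors use it as an imported black box. So there is no in-paper argument against which to match your attempt; it has to stand on its own.

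As it stands, it does not. Your plan identifies the right starting identity, $F^{k-1}\circ(F\sigma)=F^{k-1}\circ F$, and in fact the paper's Lemma~\ref{Tomlemma} together with the commutative-diagram argument that follows it already gives a precise version of your ``Ritt-style'' step in this situation: since $\sigma$ fixes $\mathcal{C}_F$ setwise, there is a unique M\"obius map $\nu$ with $F\sigma=\nu\circ F$, and $\nu\in\Deck(F^{k-1})$. But that is a \emph{left-factor} relation, $F\sigma=\nu\circ F$, not the \emph{conjugacy} $F\sigma=\beta\circ F\circ\beta^{-1}$ asserted by the proposition. Your sketch jumps from the former to the latter, and that jump is exactly where the content lies: writing $\nu\circ F=\beta\circ F\circ\beta^{-1}$ forces $F\circ\beta^{-1}=F$, i.e.\ $\beta\in\Deck(F)$. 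From $\nu\in\Deck(F^{k-1})$ one cannot conclude $\nu\in\Deck(F)$ — indeed if $\nu\in\Deck(F)$ then $F\sigma=\nu F=F$ and $\sigma\in\Deck(F)$, which collapses the general case back to $k=1$. Showing that $\nu\circ F$ is nevertheless M\"obius-conjugate to $F$ is precisely the assertion of the proposition, and your outline supplies no mechanism for it.

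There are also two circularities to flag. First, you lean on ``the rational-function extension of Ritt theory, developed by Pakovich in \cite{Sym}'' to carry the key step, but \cite{Sym} is the very source of the proposition; a proof cannot outsource its central step to the reference it is trying to reprove. Second, to handle the exceptional cases you invoke Theorem~\ref{t:DetectQuadraticCritPts}(iii), yet that theorem's proof in the paper depends on Proposition~\ref{p:critcoaldecks}, which is itself proved using Proposition~\ref{p:Pakovich}. Finally, a smaller point: the Ritt-type statement you posit, that $A\circ B_1=A\circ B_2$ with $\deg B_1=\deg B_2$ implies $B_2=\beta\circ B_1$ for some $\beta\in\Deck(A)$, is false for general rational $A$; it requires $\Deck(A)$ to act transitively on generic fibers of $A$, which holds only when $A$ is a normal (Galois) covering. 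In the situation at hand the conclusion $F\sigma=\nu F$ is rescued by Lemma~\ref{Tomlemma} (which uses that both sides are bicritical with the same critical points), not by a blanket Ritt principle.
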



\begin{proposition}\label{p:critcoaldecks}
Let $f_a(z) = \frac{z^2-a}{z^2 + a}$.
\begin{enumerate}
\item If $a = \pm 1$ then $\Deck(f^2) \cong V_4$ and $\Deck(f^k) \cong D_8$ for all $k \geq 3$.
\item If $a \neq \pm 1$, then $\Deck(f^k_a) \cong V_4$ for all $k \geq 2$. 
\end{enumerate} 
\end{proposition}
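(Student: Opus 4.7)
The plan is to prove both parts by induction on $k$, with Lemma~\ref{Tomlemma} providing the descent mechanism from $\Deck(f_a^k)$ to $\Deck(f_a^{k-1})$. The base case $\Deck(f_a^2) \cong V_4$ is Proposition~\ref{p:characterizecritcoal} combined with the degree bound $|\Deck(f_a^2)| \leq \deg f_a^2 = 4$ of Proposition~\ref{p:elementaryfacts}(vi). A general structural reduction, used throughout, is that by Theorem~\ref{t:cyclicOrDihedral}, Lemma~\ref{l:powermaps}, and Proposition~\ref{p:primeorders}, each $\Deck(f_a^k)$ is a dihedral $2$-group in the list $\{\mathrm{id},\mathbb{Z}_2,V_4,D_8,D_{16},\ldots\}$ (since $f_a$ is never a power map). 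In particular, a strict extension of $V_4$ in $\Deck(f_a^k)$ must contain $D_8$, and a strict extension of $D_8$ must contain $D_{16}$.

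For part (ii), I would induct on $k \geq 3$ under the hypothesis $\Deck(f_a^{k-1}) = V_4$, and suppose toward contradiction that $\Deck(f_a^k) \supsetneq V_4$. Then $\Deck(f_a^k) \supseteq D_8$ and contains an order-$4$ elliptic M\"obius element $\sigma$ with $\sigma^2 = \mu = -z$, the central involution of $V_4$ coming from $\Deck(f_a)$. A classification step---using that $\sigma$ must preserve $\mathrm{Fix}(\mu) = \{0,\infty\}$ and that a swap of $\{0,\infty\}$ squares to the identity rather than to $\mu$---forces $\sigma(z) = \pm iz$. A direct calculation gives $f_a(iz) = 1/f_a(z)$, so by Lemma~\ref{Tomlemma} the unique M\"obius transformation $\tau$ with $\tau \circ f_a = f_a \circ \sigma$ is $\tau(z) = 1/z$, and the discussion following Lemma~\ref{Tomlemma} yields $\tau \in \Deck(f_a^{k-1}) = V_4 = \{z,-z,a/z,-a/z\}$ by the inductive hypothesis. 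But $1/z$ belongs to this set only for $a = \pm 1$, giving the desired contradiction.

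For part (i), since $f_1$ and $f_{-1}$ are conjugate via $z \mapsto -1/z$ (Proposition~\ref{p:elementaryfacts}(iii)), it suffices to treat $a = 1$. For $k = 3$, the descent relation above gives $iz \in \Deck(f_1^3)$ because now $1/z \in V_4 = \Deck(f_1^2)$, so $\Deck(f_1^3) \supseteq \langle iz, 1/z\rangle \cong D_8$, and the degree bound forces equality. For $k \geq 4$, I would induct assuming $\Deck(f_1^{k-1}) = D_8$, and suppose $\Deck(f_1^k) \supsetneq D_8$; then $\Deck(f_1^k) \supseteq D_{16}$ and contains an order-$8$ rotation, which by the same classification must be $\rho(z) = e^{i\pi/4}z$ (up to replacement by a Galois sibling). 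Using the inverse substitution $z^2 = (1+u)/(1-u)$ with $u = f_1(z)$, one computes
\[
f_1(\rho(z)) = \frac{iz^2-1}{iz^2+1} = \frac{u+i}{iu+1},
\]
so that the descent of $\rho$ is $\tau(u) = (u+i)/(iu+1)$. Since $\tau(0) = i \notin \{0,\infty\}$, yet every element of $D_8 = \{z,\pm iz,-z,1/z,\pm i/z,-1/z\}$ sends $0$ into $\{0,\infty\}$, we have $\tau \notin \Deck(f_1^{k-1}) = D_8$, a contradiction.

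The main obstacle will be organizing the dihedral-$2$-group structural reductions and, in part (i), carrying out the algebraic simplification that identifies the descent $\tau$ of the order-$8$ rotation; once $\tau$ is in hand, the contradiction via evaluation at $z=0$ is immediate.
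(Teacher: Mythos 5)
Your proof is correct, and it takes a genuinely different route from the paper's. The paper handles part (i) by citing Pakovich directly and proves part (ii) by invoking Pakovich's result (Proposition~\ref{p:Pakovich}) that any $\sigma\in\Deck_\infty(f_a)$ satisfies $f_a\circ\sigma = \beta\circ f_a\circ\beta^{-1}$, then solving for the conjugating $\beta$ of the form $dz^{\pm1}$ and eliminating the resulting solutions. You instead run everything through the descent mechanism of Lemma~\ref{Tomlemma}: identify the putative order-$4$ (respectively order-$8$) rotation in $\Deck(f_a^k)$ forced by the dihedral $2$-group structure, compute its descent $\tau$ explicitly, and use the inductive hypothesis $\Deck(f_a^{k-1})=V_4$ (resp.\ $D_8$) to rule $\tau$ out. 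This buys you a self-contained argument (no external input from \cite{Sym}), at the cost of the rational-function simplification needed to identify the descent of $e^{i\pi/4}z$. Your inductive framework also requires, and correctly uses, both directions of the descent correspondence: in the $k=3$ base case of part (i) you need the ``ascent'' ($\nu\in\Deck(f^{k-1})$ and $\nu\circ f=f\circ\sigma$ with $\sigma$ fixing $\mathcal C_f$ implies $\sigma\in\Deck(f^k)$), which is easy but is not stated explicitly in the discussion after Lemma~\ref{Tomlemma}; you should note it, since $f^k\circ\sigma = f^{k-1}\circ(\nu\circ f)=f^k$ makes it immediate. The computations $f_a(iz)=1/f_a(z)$ and $f_1(e^{i\pi/4}z)=(u+i)/(iu+1)$ with $u=f_1(z)$ both check out, and the evaluation $\tau(0)=i\notin\{0,\infty\}$ cleanly excludes $\tau$ from $D_8$.
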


\begin{proof}\mbox{}
\begin{enumerate}
\item This was proven by Pakovich in \cite{Sym}. Our method to prove the second part is follows the method of Pakovich.
\item We know that $\Deck(f_a^2) = \{ \mathrm{id}, -z, a/z, -a/z \} \cong V_4$. Since we know that $\Deck(f_a^k)$ cannot be a polyhedral group, then if $\Deck(f^k_a)$ ($k > 2$) is not isomorphic to $V_4$, it must be isomorphic to a dihedral group. Such a dihedral group must contain an element of order greater than $2$ which, by Lemma~\ref{l:nonquadraticcandetectcps}, fixes the critical points $0$ and $\infty$, and so this element must be of the form $\sigma(z) = cz$ for some $c \in \mathbb{C}^\ast$. By Proposition~\ref{p:Pakovich}, we need to show that if $\sigma(z) = c z$ and
\begin{equation}\label{e:Pakovich}
f_a \circ \sigma = \beta \circ f_a \circ \beta^{-1}
\end{equation}
for some M\"obius map $\beta$ then $c = \pm 1$. By equation \eqref{e:Pakovich}, since both sides of the equation have the same critical points, any $\beta$ satisfying the equation must be of the form $\beta(z) = dz^{\pm 1}$. If $\beta(z) = dz$, then \eqref{e:Pakovich} becomes
\[
 \frac{c^2 z^2 - a}{c^2 z^2 + a} = \frac{1}{d}\frac{d^2 z^2 - a}{d^2 z^2 + a}
\]
which is solved by $c = \pm 1$ and $d = 1$. On the other hand, if $\beta = d/z$, then \eqref{e:Pakovich} now becomes
\[
 \frac{c^2 z^2 - a}{c^2 z^2 + a} = d \frac{ad^2 + z^2}{ad^2-z^2},
\]
and this is solved by $d=-1$ and $c = \pm ai$.

To complete the proof, observe that if $\sigma(z) =  ai z$ is an element of $\Sigma_\infty(F)$, then so is $\sigma^2(z) = -a^2 z$. But the above computations show that this would mean $-a^2 \in \{ 1,-1, ai, -ai \}$. This yields the possibilities $a \in \{ i, -i , 1, -1 \}$. Since by assumption $a \neq \pm 1$ we are left with the case $a = \pm i$. But then we would have $a i = \mp 1$, means $\sigma(z) = \pm z$. 
\end{enumerate}
\end{proof}

We note that no hyperbolic map of the form $f_a$ can be a mating (see e.g \cite{MilnorMating,ShishikuraTanMating}); a hyperbolic map which is a mating has to have disjoint critical orbits, and so the condition $f(v_1) = f(v_2)$ is incompatible with this requirement. However, there do exist matings in this family. For example, the map $f_i$ is equal to the self-mating of the quadratic polynomial which is the landing point of the parameter ray of argument $1/4$ in the Mandelbrot set.

\section{Detecting critical points and values of non-critically coalescing quadratic maps}\label{s:DetectCritnoncritcoal}
Before addressing the subtler critically coalescing (or, equivalently by Proposition \ref{p:characterizecritcoal}, $\textrm{Deck}(f^k) \cong V_4$) case,  we briefly show that the techniques of Section \ref{s:higherdegree} can be used to detect $\mathcal{C}_f$ and $\mathcal{V}_f$ in the non-critically coalescing quadratic case.  First we show we can detect the critical points of $f$.

\begin{lemma}
Let $f$ be a quadratic rational map which is not critically coalescing and $k \geq 1$. Then $\Deck(f^k)$ contains an element of order greater than $2$ or $\Deck(f^k) = \Deck(f) \cong \mathbb{Z}_2$.
\end{lemma}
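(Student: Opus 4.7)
The plan is to combine the structure theorem for $\Deck(f^k)$ with the characterization of the critically coalescing case already established in the preceding subsection. Since the hypothesis excludes critical coalescence, the deck group of every iterate is forced into the cyclic case, and from there a size comparison finishes the argument.

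First I would invoke Theorem~\ref{t:cyclicOrDihedral} to observe that $\Deck(f^k)$ is either cyclic or dihedral. Proposition~\ref{p:dihedralimpliescoalescing} then immediately rules out the dihedral possibility: if $\Deck(f^k)$ were dihedral for some $k$, then $f$ would be critically coalescing, contradicting the hypothesis. So $\Deck(f^k)$ is cyclic for every $k$.

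Next I would split on the order of this cyclic group. If $|\Deck(f^k)| > 2$, then a generator provides an element of order strictly greater than $2$, which is the first conclusion. Otherwise $|\Deck(f^k)| \leq 2$. In this case I would use the chain $\Deck(f) \subseteq \Deck(f^k)$ from Proposition~\ref{p:elementaryfacts}\eqref{i:subgroup}, together with Lemma~\ref{l:elliptic}, which guarantees that $\Deck(f)$ contains the order-$2$ elliptic rotation about the axis through $\mathcal{C}_f$; combined with the order bound $|\Deck(f)| \leq 2$ from Proposition~\ref{p:elementaryfacts}\eqref{i:orderbound}, this forces $\Deck(f) \cong \mathbb{Z}_2$. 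Hence $\Deck(f^k)$ sits between a group of order $2$ and one of order at most $2$, giving $\Deck(f^k) = \Deck(f) \cong \mathbb{Z}_2$, which is the second alternative.

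There is no real obstacle here; the lemma is a clean synthesis of the earlier results. The only point worth being careful about is that the application of Proposition~\ref{p:dihedralimpliescoalescing} requires that $f$ be quadratic, which is part of our hypothesis, so the implication applies directly without needing to separately exclude power maps (power maps are automatically non-coalescing yet have cyclic $\Deck(f^k)$, consistent with the first alternative of the statement).
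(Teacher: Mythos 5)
Your proof is correct and follows essentially the same route as the paper's: reduce to the cyclic/dihedral dichotomy (Theorem~\ref{t:cyclicOrDihedral}), eliminate the non-cyclic possibility via the critical-coalescence hypothesis, and then pin down the residual $\mathbb{Z}_2$ case using $\Deck(f) \subseteq \Deck(f^k)$. The one cosmetic difference is that the paper rules out the single offending group $V_4$ directly via Proposition~\ref{p:characterizecritcoal}, whereas you rule out \emph{all} dihedral groups via Proposition~\ref{p:dihedralimpliescoalescing} and then handle the cyclic case by comparing orders; these are interchangeable here, and your version is, if anything, a bit more explicit about why the remaining options collapse to $\Deck(f) \cong \mathbb{Z}_2$.
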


\begin{proof}
Suppose that all elements of $\Deck(f^k)$ have order $1$ or $2$. If $\Deck(f^k) \cong V_4$ then by Proposition~\ref{p:characterizecritcoal} $f$ would be critically coalescing, which is a contradiction. Thus $\Deck(f^k) \cong \mathbb{Z}_2$.
\end{proof}

\begin{corollary}\label{cor:detectcpsnocritcoal}
Let $f$ be a quadratic rational map which is not critically coalescing and $k \geq 1$. Then we can detect the critical points of $f$ from $\mathcal{C}_f$.
\end{corollary}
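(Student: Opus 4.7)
The plan is to invoke the preceding lemma, which provides a clean dichotomy: either $\Deck(f^k)$ contains an element of order greater than $2$, or $\Deck(f^k) = \Deck(f) \cong \mathbb{Z}_2$. I will show that in both cases $\mathcal{C}_f$ is precisely the set of fixed points of any non-identity element of $\Deck(f^k)$. Once this is established, the critical points of $f$ are intrinsically recoverable from $f^k$, since $\Deck(f^k)$ depends only on $f^k$.

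In the case $\Deck(f^k) \cong \mathbb{Z}_2$, the unique non-identity element coincides with the generator of $\Deck(f)$, which by Lemma~\ref{l:elliptic} is the order-$2$ elliptic rotation about the axis joining the two critical points of $f$. Its fixed point set is therefore $\mathcal{C}_f$. In the case $\Deck(f^k)$ contains an element of order greater than $2$, Lemma~\ref{l:powermaps} forces $f$ to be a power map and $\Deck(f^k)$ to be cyclic. Lemma~\ref{l:cyclic} then guarantees that every non-identity element of this finite cyclic group of M\"obius transformations shares a common pair of fixed points; in particular any such element has the same fixed points as the generator $\mu$ of $\Deck(f) \subseteq \Deck(f^k)$, and these are $\mathcal{C}_f$ by Lemma~\ref{l:elliptic}.

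The only mild conceptual obstacle is ensuring that we can single out $\mathcal{C}_f$ without first knowing which element of $\Deck(f^k)$ generates $\Deck(f)$; this is settled by the uniformity of the conclusion, since in both branches of the dichotomy \emph{every} non-identity element of $\Deck(f^k)$ has fixed point set exactly $\mathcal{C}_f$. Thus picking any non-identity deck transformation of $f^k$ and taking its fixed points reads off $\mathcal{C}_f$, completing the argument.
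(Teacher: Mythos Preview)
Your approach is the same as the paper's, but there is a small gap in the second branch of the dichotomy. You write that ``Lemma~\ref{l:powermaps} forces $f$ to be a power map and $\Deck(f^k)$ to be cyclic,'' but Lemma~\ref{l:powermaps} has cyclicity of $\Deck(f^k)$ as part of its \emph{hypothesis}, not as a conclusion: it says $\Deck(f^k)$ is cyclic of order greater than $2$ if and only if $f$ is a power map. Merely knowing that $\Deck(f^k)$ contains an element of order greater than $2$ does not, by itself, rule out the possibility that $\Deck(f^k)$ is dihedral (say $D_{2n}$ with $n\geq 3$), and Lemma~\ref{l:powermaps} says nothing in that case.

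The missing step is exactly what the paper supplies: one must also invoke Proposition~\ref{p:dihedralimpliescoalescing}, whose contrapositive says that if $f$ is not critically coalescing then $\Deck(f^k)$ is never dihedral. Combined with Theorem~\ref{t:cyclicOrDihedral}, this forces $\Deck(f^k)$ to be cyclic, and then your use of Lemma~\ref{l:cyclic} (or equivalently Lemma~\ref{l:powermaps}) goes through. With that one additional citation your argument is complete and matches the paper's.
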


\begin{proof}
By the lemma, either $\Deck(f^k)$ contains an element of order greater than $2$ or $\Deck(f^k) \cong \mathbb{Z}_2$. If $\Deck(f^k)$ contains an element $\mu$ of order greater than $2$, then by Lemma~\ref{l:powermaps} and Proposition~\ref{p:dihedralimpliescoalescing}, $f$ is a power map and the critical points of $f$ are fixed by every non-identity element of $\Deck(f^k)$. On the other hand if $\Deck(f^k) \cong \mathbb{Z}_2$ then $\Deck(f^k) = \Deck(f)$ and so the critical points of $f$ are fixed by the unique non-identity element of $\Deck(f^k)$.
\end{proof}


We can also easily detect the critical values in the non-critically coalescing case.

\begin{lemma} \label{l:nontypealphacandetectcvs}
Let $f$ be a quadratic rational map that is not critically coalescing. Then for each $k \in \mathbb{N}$,  $x \in \mathcal{V}_f$ if and only if $f^{-k}(x) \subseteq \mathcal{C}_{f^{k}}$.
\end{lemma}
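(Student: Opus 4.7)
My plan is to mirror the proof of Lemma~\ref{l:nonquadraticcandetectcvs}, using the non-critical coalescence hypothesis as a replacement for the degree hypothesis $d \geq 3$ to guarantee that the inductive preimage selection always succeeds. The whole argument reduces to one combinatorial step about the fibers of $f$.

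The forward direction does not actually require non-critical coalescence. Since $f$ is a quadratic bicritical map, each critical value $v_i$ has the single critical preimage $c_i$ with multiplicity $2$, so $f^{-1}(v_i) = \{c_i\}$. Thus for any $x = v_i \in \mathcal{V}_f$ and any $y \in f^{-k}(x)$, the point $f^{k-1}(y)$ must be $c_i \in \mathcal{C}_f$, and the multiplicativity of local degrees along an orbit gives $\deg_{f^k}(y) \geq \deg_f(c_i) = 2$, so $y \in \mathcal{C}_{f^k}$.

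For the converse, suppose $x \notin \mathcal{V}_f$. I plan to build a preimage chain $x = x_0, x_1, \dotsc, x_k$ with $f(x_{j+1}) = x_j$ and $x_j \notin \mathcal{V}_f$ for $0 \leq j \leq k-1$, then take any $x_k \in f^{-1}(x_{k-1})$. The contrapositive of ``$x_{j+1} \in \mathcal{C}_f \Rightarrow x_j \in \mathcal{V}_f$'' then gives $x_j \notin \mathcal{C}_f$ for $1 \leq j \leq k$, so $\deg_{f^k}(x_k) = \prod_{j=1}^{k} \deg_f(x_j) = 1$, showing $x_k \in f^{-k}(x) \setminus \mathcal{C}_{f^k}$.

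The main obstacle, and the only place non-critical coalescence enters, is verifying the inductive step: given $x_j \notin \mathcal{V}_f$, I must produce $x_{j+1} \in f^{-1}(x_j) \setminus \mathcal{V}_f$. Because quadratic fibers have at most two points, this could fail only if $f^{-1}(x_j) \subseteq \mathcal{V}_f$, and I will rule this out by showing it would force $x_j \in \mathcal{V}_f$. There are two sub-cases. If $f^{-1}(x_j)$ consists of two distinct points both lying in $\mathcal{V}_f = \{v_1,v_2\}$, then $\{v_1,v_2\} = f^{-1}(x_j)$ and $f(v_1) = f(v_2) = x_j$, contradicting the non-critical coalescence hypothesis. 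Otherwise $f^{-1}(x_j) = \{p\}$ with multiplicity $2$, so $p \in \mathcal{C}_f$; if additionally $p \in \mathcal{V}_f$, then $x_j = f(p) \in f(\mathcal{C}_f) = \mathcal{V}_f$, contrary to assumption. This completes the induction and hence the proof.
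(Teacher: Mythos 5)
Your proof is correct and follows essentially the same approach as the paper's: both arguments adapt Lemma~\ref{l:nonquadraticcandetectcvs} by inductively constructing a preimage chain avoiding $\mathcal{V}_f$, with the non-critical-coalescence hypothesis doing exactly the same job as the degree bound did there. Your case analysis of the inductive step (noting the single-preimage sub-case is automatically excluded once $x_j \notin \mathcal{V}_f$) is a slightly more explicit rendering of the paper's one-line observation, but it is the same idea.
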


\begin{proof}
The argument proceeds similarly to that for Lemma \ref{l:nonquadraticcandetectcvs}. This time, we notice that since $f$ is not critically coalescing, then for any $y \notin \mathcal{V}_f$, there exists an element $y' \in f^{-1}(y)$ which is not a critical value of $f$. Thus as before we may construct a sequence $x_0 = x,\dotsc,x_k$, so that $x_k \in f^{-k}(x)$ but $x_k$ is not a critical point of $f^{k}$.
\end{proof}

Lemma~\ref{l:nontypealphacandetectcvs} has the following immediate corollary.  

\begin{corollary} \label{l:cvscoincide}
Fix a rational map $F$.  If there exists a quadratic rational map $f$ that is not critically coalescing such that $f^k = F$ for some $k \in \mathbb{N}$, then 
$\mathcal{V}_f = \{x \in \hat{\mathbb{C}} \mid F^{-1}(x) \subseteq \mathcal{C}_F\}$. In particular we can detect the critical values of $f$ from $f^k$.
\end{corollary}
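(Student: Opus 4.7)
The plan is to read this corollary as essentially a rephrasing of Lemma \ref{l:nontypealphacandetectcvs}. Given a rational map $F$ together with a quadratic, non-critically-coalescing $f$ and $k \in \mathbb{N}$ satisfying $f^k = F$, I would invoke Lemma \ref{l:nontypealphacandetectcvs} to conclude that $x \in \mathcal{V}_f$ if and only if $f^{-k}(x) \subseteq \mathcal{C}_{f^k}$. Substituting the identity $F = f^k$ (so that $f^{-k}(x) = F^{-1}(x)$ and $\mathcal{C}_{f^k} = \mathcal{C}_F$) rewrites this equivalence as exactly the claimed set equality
\[
\mathcal{V}_f = \{x \in \hat{\mathbb{C}} \mid F^{-1}(x) \subseteq \mathcal{C}_F\}.
\]

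For the detection consequence, the key observation is that the right-hand side of the displayed equality is defined using only $F$ and not the particular $f$ chosen. Concretely, if $g$ is another quadratic, non-critically-coalescing rational map with $g^k = F$, then the same application of Lemma \ref{l:nontypealphacandetectcvs} yields $\mathcal{V}_g = \{x \mid F^{-1}(x) \subseteq \mathcal{C}_F\}$, and comparing with the corresponding expression for $\mathcal{V}_f$ gives $\mathcal{V}_f = \mathcal{V}_g$, which is precisely the meaning of detecting $\mathcal{V}_f$ from $f^k$.

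There is no real obstacle to overcome here: all of the substantive content lives in Lemma \ref{l:nontypealphacandetectcvs}, whose proof handled the subtlety that, in the non-critically-coalescing quadratic case, every non-critical-value $y$ admits some preimage under $f$ that is itself not a critical value, allowing the inductive construction of a backward orbit avoiding $\mathcal{V}_f$. Once that lemma is in hand, the corollary is only a change of notation from $f^k$ to $F$.
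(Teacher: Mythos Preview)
Your proposal is correct and matches the paper's approach exactly: the paper simply declares this an immediate corollary of Lemma~\ref{l:nontypealphacandetectcvs}, and your argument is just the explicit unpacking of that via the substitution $F = f^k$.
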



\section{Detecting Critical Points and Critical Values for Critically Coalescing quadratic rational maps}\label{s:Detectcritcoal}

In this section we discuss how we may detect $\mathcal{C}_f$ and $\mathcal{V}_f$ in the case where $\Deck(f^k) \cong V_4$. Along with the results in the previous section, this will allow us to complete the proof of Theorem~\ref{t:DetectQuadraticCritPts}.

Suppose $f$ is critically coalescing. By Proposition~\ref{p:specialpairs2}, there exist three special pairs
\[
f^{-1}(c_1) \coloneqq \{a_1,a_2\}, \quad  f^{-1}(c_2) \coloneqq \{b_1,b_2\}
\]
and the true critical points $\mathcal{C}_f = \{c_1,c_2 \}$, which are the fixed points of some non-identity element of $\Deck(f^k)$. \emph{A priori} we cannot distinguish these pairs from one another, but we do know the true critical points are one of these pairs. We will show that a deeper analysis of $f^k$ will allow us to differentiate $\mathcal{C}_f = \{c_1,c_2\}$ from the other pairs, thus allowing us to detect $\mathcal{C}_f$ from $f^k$. The following Lemma is immediate.

\begin{lemma}\label{l:critpreimscommonimage}
 Let $f$ be a critically coalescing quadratic rational map and $k \geq 3$. Then $$f^k(a_1) = f^k(a_2) = f^k(b_1) = f^k(b_2).$$
\end{lemma}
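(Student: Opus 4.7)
The plan is to unwind the definitions and simply compute the orbits of the four points $a_1,a_2,b_1,b_2$ for three iterations of $f$, at which point all four coincide; after that the equality persists.

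Write $\mathcal{V}_f = \{v_1,v_2\}$ with $v_i = f(c_i)$, and set $\beta \coloneqq f(v_1) = f(v_2)$, the common image guaranteed by the critically coalescing hypothesis. First I would record the chain $a_i \xmapsto{f} c_1 \xmapsto{f} v_1 \xmapsto{f} \beta$ for $i \in \{1,2\}$, which gives $f^3(a_1) = f^3(a_2) = \beta$. Symmetrically $b_i \xmapsto{f} c_2 \xmapsto{f} v_2 \xmapsto{f} \beta$, so $f^3(b_1) = f^3(b_2) = \beta$. Combining these four equalities yields $f^3(a_1) = f^3(a_2) = f^3(b_1) = f^3(b_2) = \beta$.

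For general $k \geq 3$, write $f^k = f^{k-3} \circ f^3$ and apply $f^{k-3}$ to the common value $\beta$; this gives $f^k(a_1) = f^k(a_2) = f^k(b_1) = f^k(b_2) = f^{k-3}(\beta)$, which is the desired conclusion. There is no real obstacle here: the only content is the definition of critical coalescence $f(v_1) = f(v_2)$, together with the fact that $\{a_1,a_2\} = f^{-1}(c_1)$ and $\{b_1,b_2\} = f^{-1}(c_2)$ are the first preimages of the critical points, so that after three applications of $f$ each orbit has passed through a critical point, then a critical value, and finally into $\beta$.
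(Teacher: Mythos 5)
Your proof is correct and matches the intended reasoning; the paper states the lemma is ``immediate'' and gives no proof, and your three-step chain $a_i \mapsto c_1 \mapsto v_1 \mapsto \beta$ (and symmetrically for $b_i$) together with $\beta = f(v_1) = f(v_2)$ is exactly the computation the authors had in mind.
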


Recall that given a rational map $f$ with critical point set $\mathcal{C}_f$, the postcritical set of $f$ is the set
\[
 P_f = \bigcup_{i=1}^\infty f^i(\mathcal{C}_f).
\]
We assume in the following that $k>1$ is fixed.

\subsection{The case where $P_f$ does not contain a fixed point}

\begin{lemma}\label{l:critcoalnofixedpoint}
Let $f$ be a critically coalescing quadratic rational map and suppose that $P_{f}$ does not contain a fixed point of $f$. Then $f^k(c_1) \neq f^k(a_1)$.
\end{lemma}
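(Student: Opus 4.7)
The plan is to prove the lemma by directly tracking the forward orbits of $c_1$ and $a_1$ under $f$ and showing that the equality $f^k(c_1) = f^k(a_1)$ would force a point in the postcritical set $P_f$ to be fixed by $f$.

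First I would use the basic structure of the critical orbit. Since $a_1 \in f^{-1}(c_1)$, we have $f(a_1) = c_1$ and therefore, for any $k \geq 2$,
\[
f^k(a_1) = f^{k-1}(c_1) = f^{k-2}(v_1),
\]
where $v_1 = f(c_1) \in \mathcal V_f$. On the other hand,
\[
f^k(c_1) = f^{k-1}(v_1).
\]
So the two iterates $f^k(c_1)$ and $f^k(a_1)$ are simply consecutive terms in the $f$-orbit of $v_1$, shifted by one.

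Next, suppose for contradiction that $f^k(c_1) = f^k(a_1)$. Then
\[
f^{k-1}(v_1) = f^{k-2}(v_1),
\]
which is exactly the statement that the point $w := f^{k-2}(v_1)$ satisfies $f(w) = w$. Since $k \geq 2$, we have $w = f^{k-2}(v_1) \in \{v_1\} \cup f(P_f) \subseteq P_f$ (using that $v_1 = f(c_1) \in P_f$ and $P_f$ is forward invariant). Thus $w$ is a fixed point of $f$ lying in $P_f$, contradicting the hypothesis that $P_f$ contains no fixed point.

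There is essentially no obstacle here; the argument is just bookkeeping on the critical orbit together with the defining property $f(a_1) = c_1$ that singles out the preimage pair from the true critical pair. The only mild care needed is to verify the $k=2$ boundary case, but there the calculation reads $f^2(c_1) = \beta$ and $f^2(a_1) = v_1$, and equality yields $v_1 = f(v_1)$, so $v_1 \in P_f$ is a fixed point, which is again the desired contradiction.
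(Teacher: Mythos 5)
Your proof is correct and is essentially the same as the paper's: both observe that $f^k(c_1)$ is one more application of $f$ to $f^k(a_1)$ (since $f(a_1)=c_1$), and that $f^k(a_1)=f^{k-2}(v_1)$ lies in the forward-invariant set $P_f$, so equality would force a fixed point in $P_f$. Your explicit handling of the $k=2$ boundary is a small bonus; the paper states its computation "for $k\geq 3$" but the same one-line argument covers $k=2$ just as yours does.
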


\begin{proof}
 Since the postcritical set of $f$ does not contain a fixed point, we have for all $k \geq 3$ that $$f^k(c_1) = f^k(f(a_1)) = f(f^k(a_1)) \neq f^k(a_1).$$
\end{proof}

\begin{corollary}\label{cor:detectcpscritcoalnofixed}
 Let $f$ be a critically coalescing quadratic map and suppose that $P_{f}$ does not contain a fixed point of $f$. Then the critical points of $f$ are characterized uniquely by the following two properties.
 \begin{enumerate}
  \item The critical points of $f$ are fixed by a non-identity element of $\Deck(f^k)$.
  \item The critical points of $f$ have the same image under $f^k$, but this image is distinct from the image of elements of the other special pairs.
 \end{enumerate}
 In particular, we can detect the critical points of $f$ from $f^k$.
\end{corollary}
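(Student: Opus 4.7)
The plan is to reduce the statement to a bookkeeping exercise using Proposition~\ref{p:specialpairs2} together with Lemmas~\ref{l:critcoalnofixedpoint} and~\ref{l:critpreimscommonimage}. First I would observe that Proposition~\ref{p:specialpairs2} identifies the three special pairs of $\Deck(f^k)\cong V_4$ as $\mathcal C_f=\{c_1,c_2\}$, $f^{-1}(c_1)=\{a_1,a_2\}$, and $f^{-1}(c_2)=\{b_1,b_2\}$, and that (by Lemma~\ref{l:specialpairs} and the definition of the special pairs) these are exactly the two-point subsets of $\hat{\mathbb C}$ that are fixed pointwise by a non-identity element of $\Deck(f^k)$. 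So property~(1) cuts the candidate set down to these three pairs, and the remaining task is to isolate $\mathcal C_f$ among them via property~(2).

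Next I would verify that $\mathcal C_f$ does satisfy property~(2). Setting $\beta := f(v_1)=f(v_2)$, the definition of critically coalescing gives $f^2(c_1)=f^2(c_2)=\beta$, so for every $k\geq 2$ one has $f^k(c_1)=f^k(c_2)=f^{k-2}(\beta)$, which is the ``common image'' half of~(2). The separation $f^k(c_i)\neq f^k(a_j)$ is precisely Lemma~\ref{l:critcoalnofixedpoint}, and the symmetric argument (swapping the roles of the two critical values) yields $f^k(c_i)\neq f^k(b_j)$. Together these show that the common image $f^{k-2}(\beta)$ of the critical points is distinct from the $f^k$-image of every element of the other two special pairs.

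Finally I would rule out the two non-critical special pairs. By Lemma~\ref{l:critpreimscommonimage}, for $k\geq 3$ one has $f^k(a_1)=f^k(a_2)=f^k(b_1)=f^k(b_2)$, so the common $f^k$-image of $\{a_1,a_2\}$ coincides with the $f^k$-image of the elements of the other special pair $\{b_1,b_2\}$; hence $\{a_1,a_2\}$ fails the ``distinctness'' half of~(2), and by symmetry so does $\{b_1,b_2\}$. Thus $\mathcal C_f$ is the unique pair satisfying both properties, and since both properties depend only on data read off from $f^k$ (namely $\Deck(f^k)$ and the fibers/images of $f^k$), this gives the detection statement. The main obstacle is really only the need to keep the hypothesis ``$P_f$ contains no fixed point'' in play at the right moment: it enters exactly through Lemma~\ref{l:critcoalnofixedpoint} when ruling out the coincidence $f^{k-2}(\beta)=f^{k-3}(\beta)$, and without it the three common images could collapse and $\mathcal C_f$ would no longer be distinguished.
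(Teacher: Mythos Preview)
Your argument is correct and follows the paper's approach; the paper's proof simply cites Lemma~\ref{l:specialpairs} for property~(1) and Lemma~\ref{l:critcoalnofixedpoint} for property~(2), while you additionally spell out the uniqueness step by invoking Proposition~\ref{p:specialpairs2} and Lemma~\ref{l:critpreimscommonimage} to show that the two non-critical special pairs share a common $f^k$-image and hence fail~(2). This is a reasonable elaboration of what the paper leaves implicit; note only that your uniqueness argument via Lemma~\ref{l:critpreimscommonimage} uses $k\geq 3$, whereas for $k=2$ all three pairs have pairwise distinct images and the stated characterization does not single out $\mathcal{C}_f$ --- but the paper's terse proof has the same tacit restriction, and for detection purposes one may always pass to a higher iterate.
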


\begin{proof}
 The first claim is Lemma~\ref{l:specialpairs} and the second is Lemma~\ref{l:critcoalnofixedpoint}. \end{proof}
 
 A similar argument allows us to detect the critical values of $f$ in this case. We write $w = f(v_1) = f(v_2)$.
 
\begin{lemma}\label{l:critvalsdistinctimages}
 Let $f$ be critically coalescing and suppose that $P_{f}$ does not contain a fixed point of $f$. Then $f^k(v_1) \neq f^k(w)$.
\end{lemma}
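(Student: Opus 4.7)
The plan is to reduce the claimed inequality to the assertion that some point in the postcritical set is a fixed point of $f$, and then invoke the hypothesis that $P_f$ contains no fixed point of $f$.

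First I would rewrite both sides using $f(v_1) = w$. Specifically, $f^{k}(v_1) = f^{k-1}(f(v_1)) = f^{k-1}(w)$, while the right-hand side $f^{k}(w) = f(f^{k-1}(w))$. So the target inequality $f^k(v_1) \neq f^k(w)$ is equivalent to
\[
f^{k-1}(w) \neq f\bigl(f^{k-1}(w)\bigr).
\]

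Next I would argue by contradiction: if the two sides were equal, then the point $p := f^{k-1}(w)$ would satisfy $f(p) = p$, i.e.\ $p$ is a fixed point of $f$. Since $w = f(v_1) = f^{2}(c_1)$, we have $w \in P_f$, and because $P_f$ is forward-invariant under $f$, it follows that $p = f^{k-1}(w) \in P_f$. This contradicts the assumption that $P_f$ contains no fixed point of $f$, completing the proof.

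There is no real obstacle here; the statement follows from the semiconjugacy relation $f \circ f^{k-1} = f^{k-1} \circ f$ applied to $w$, together with the forward-invariance of $P_f$. The only point requiring care is the bookkeeping that $k \geq 2$ (so that $f^{k-1}(w)$ is well-defined as an iterate), which is the standing assumption of the section.
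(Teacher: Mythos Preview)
Your proof is correct and essentially identical to the paper's: both rewrite $f^k(w)$ as $f(f^k(v_1))$ (your $f^{k-1}(w)$ equals $f^k(v_1)$) and observe that equality would force this postcritical point to be fixed by $f$, contradicting the hypothesis. The only difference is cosmetic bookkeeping in whether one tracks the point as $f^k(v_1)$ or $f^{k-1}(w)$.
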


\begin{proof}
 Since $w = f(v_1)$ we have $f^k(w) = f^k(f(v_1)) = f(f^k(v_1))$. Since $f^k(v_1)$ is not a fixed point of $f$, we see that $f^k(v_1) \neq f(f^k(v_1))$ and the result follows.
\end{proof}

\begin{corollary}\label{cor:detectcvscritcoalnofixed}
Let $f$ be critically coalescing and suppose that $P_{f}$ does not contain a fixed point of $f$. Then the critical values of $f$ are characterized uniquely by the following two properties.
\begin{enumerate}
\item The fiber above the critical values of $f$ under $f^k$ consists of only critical points of $f^k$.
\item The critical values of $f$ have the same image under $f^k$, but this image is distinct the image of other $w = f(v_1) = f(v_2)$, which is the other point whose fiber under $f^k$ contains only critical points of $f^k$.
\end{enumerate}
\end{corollary}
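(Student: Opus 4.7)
The plan is a two-step identification of the critical values $\{v_1, v_2\}$ directly from $f^k$. Write $w := f(v_1) = f(v_2)$. First, I would apply Lemma~\ref{l:coalescing3pts} to show that Property~(1) picks out exactly the three-point set $\{v_1, v_2, w\}$: a point $x \in \hat{\mathbb{C}}$ has $f^{-k}(x) \subseteq \mathcal{C}_{f^k}$ if and only if $x \in \mathcal{V}_f \cup \{w\}$. These three points are genuinely distinct: $v_1 \neq v_2$ holds for any bicritical map, while $w \neq v_i$ follows from the hypothesis that $P_f$ contains no fixed point of $f$ (otherwise $v_i = w = f(v_i)$ would be fixed). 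So after Step~1, the candidate pool for $\mathcal{V}_f$ has been narrowed to three explicit points, all read off from $f^k$.

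Second, I would use Lemma~\ref{l:critvalsdistinctimages} to cut this set of three down to two. By the definition of $w$, we have $f^k(v_1) = f^{k-1}(w) = f^k(v_2)$, so $v_1$ and $v_2$ share a common image under $f^k$. By contrast, Lemma~\ref{l:critvalsdistinctimages} asserts $f^k(w) \neq f^k(v_1)$. Hence, among the three points identified in Step~1, the two that share a common $f^k$-image are precisely $v_1$ and $v_2$, and the third is $w$. This simultaneously verifies Property~(2) and shows that the unordered pair $\mathcal{V}_f$ can be recovered from $f^k$ alone.

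Since the corollary is essentially a direct combination of results already established in the paper, I do not anticipate a serious obstacle; the only nontrivial input is Lemma~\ref{l:critvalsdistinctimages}, which is where the ``no fixed point in $P_f$'' hypothesis is essential. Without that hypothesis, $f^{k-1}(w)$ could itself be a fixed point, which would force $f^k(w) = f^{k-1}(w) = f^k(v_1)$ and collapse Step~2. Uniqueness of the characterization then follows automatically: any $x$ satisfying (1) lies in $\{v_1,v_2,w\}$, and the image-coincidence condition in (2) selects exactly $\{v_1,v_2\}$ from this set.
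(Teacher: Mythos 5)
Your proof is correct and takes essentially the same approach as the paper, which proves this corollary by citing Lemma~\ref{l:coalescing3pts} for property~(1) and Lemma~\ref{l:critvalsdistinctimages} for property~(2). Your write-up just makes explicit the intermediate steps — that $v_1,v_2,w$ are pairwise distinct under the no-fixed-point hypothesis, and that $f^k(v_1)=f^k(v_2)=f^{k-1}(w)$ — which the paper leaves to the reader.
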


\begin{proof}
The first claim is from is Lemma~\ref{l:coalescing3pts} and the second claim is Lemma~\ref{l:critvalsdistinctimages}.
\end{proof}

\subsection{The case where $P_f$ contains a fixed point}

It only remains to show we can distinguish the true critical points of $f$ when $\Deck(f^k) \cong V_4$ and $P_f$ contains a fixed point. Our strategy is as follows. Since $f$ is a quadratic rational map, we know that $\Deck(f) \cong \Z_2$. Furthermore, the two fixed points of the non-identity element $\mu$ of $\Deck(f)$ are the critical points of $f$. Thus, it suffices to distinguish $\mu$ from the other elements of $\Deck(f^k) \cong V_4$.  Since $f(v_1) = f(v_2)$, the fixed point in $P_f$ must be unique. Let $\alpha$ be this fixed point and let $m$ be minimal such that $\alpha = f^m(v_1) = f^m(v_2)$. The assumptions on $f$ mean that all critical points of $f^k$ are simple and that $|V_{f^k}| = m+2$, with the following dynamics under $f$ (we denote $\beta_{j} = f^j(v_1) = f^j(v_2)$ for $1 \leq j < m$).
\[
\xymatrix{
c_1\ar@{->}[r]^{2:1} & v_1\ar[dr]  & \\
& &\beta_1 \ar[r] & \beta_2 \ar[r] & \dotsc \ar[r] & \beta_{m-1} \ar[r] & \alpha \ar@(ur,dr) \\
c_2\ar@{->}[r]^{2:1}  & v_2\ar[ur]  & 
}
\]
The following Lemma and its Corollary show that it suffices to be able to detect the elements $\alpha$ and $\beta_{m-1}$.

\begin{lemma}
Let $f$ be a quadratic rational map such that $\Deck(f^k) \cong V_4$. If $P_f$ contains a fixed point $\alpha$, then the non-identity element $\mu$ of $\Deck(f)$ is the unique element of $\Deck(f^k)$ such that $\mu(\alpha) = \beta_{m-1}$.
\end{lemma}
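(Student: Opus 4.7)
The plan is to exploit the commutation structure of $\Deck(f^k)$ to pin down where $\mu$ must send $\alpha$ and then to rule out the other three elements by a diagram chase.

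Without loss of generality we may take $k=2$: by Proposition~\ref{p:characterizecritcoal}, $\Deck(f^2)\cong V_4$, and then the containment $\Deck(f^2)\subseteq\Deck(f^k)$ of two order-four groups forces $\Deck(f^2)=\Deck(f^k)$. Write $\Deck(f^2)=\{\mathrm{id},\mu,\psi,\eta\}$; Lemma~\ref{Klein4lemma} gives $\mathrm{Fix}(\mu)=\mathcal{C}_f$, and its proof shows that $\psi$ and $\eta$ each transpose the two critical points.

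Before using any dynamical structure, I would verify the combinatorial setup: $\alpha\notin\mathcal{C}_f\cup\mathcal{V}_f$ and $\alpha\neq\beta_{m-1}$, so that $f^{-1}(\alpha)=\{\alpha,\beta_{m-1}\}$ consists of exactly two distinct points. If $\alpha=c_i$, then $v_i=f(c_i)=f(\alpha)=\alpha=c_i$, contradicting Lemma~\ref{l:criticallyCoalescing4DistinctPts}. An analogous check combined with the minimality of $m$ rules out $\alpha\in\mathcal{V}_f$, and the minimality of $m$ directly forces $\alpha\neq\beta_{m-1}$ (otherwise $f^{m-1}(v_1)$ would already be fixed). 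For existence it then suffices to observe that, since $\mu\in\Deck(f)$, it preserves fibers of $f$ by Proposition~\ref{p:elementaryfacts}\eqref{i:DeckGroupFixesFibers}, so $\mu(\alpha)\in f^{-1}(\alpha)=\{\alpha,\beta_{m-1}\}$. As $\alpha\notin\mathcal{C}_f=\mathrm{Fix}(\mu)$, we must have $\mu(\alpha)=\beta_{m-1}$.

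The main obstacle is the uniqueness claim. Clearly $\mathrm{id}(\alpha)=\alpha\neq\beta_{m-1}$. For $\tau\in\{\psi,\eta\}$, Lemma~\ref{Klein4lemma} gives $\tau(\mathcal{C}_f)=\mathcal{C}_f$, so Lemma~\ref{Tomlemma} produces a unique M\"obius transformation $\nu$ with $\nu\circ f=f\circ\tau$, and the commutative diagram immediately following Lemma~\ref{Tomlemma} forces $\nu\in\Deck(f^{k-1})=\Deck(f)=\{\mathrm{id},\mu\}$. If $\nu=\mathrm{id}$ then $\tau\in\Deck(f)$, contradicting $\tau\notin\Deck(f)$, so $\nu=\mu$. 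Evaluating $\mu\circ f=f\circ\tau$ at $\alpha$ now yields $f(\tau(\alpha))=\mu(\alpha)=\beta_{m-1}$, and if we further had $\tau(\alpha)=\beta_{m-1}$, then $\beta_{m-1}=f(\beta_{m-1})=\alpha$, contradicting $\beta_{m-1}\neq\alpha$. Hence no element of $\Deck(f^k)$ other than $\mu$ sends $\alpha$ to $\beta_{m-1}$.
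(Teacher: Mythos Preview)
Your proof is correct, and the existence half (that $\mu(\alpha)=\beta_{m-1}$ because $\mu$ preserves $f$-fibers and $\alpha\notin\mathcal{C}_f=\mathrm{Fix}(\mu)$) is exactly what the paper does. Your uniqueness argument, however, is genuinely different. The paper simply observes that $\alpha$ is not a member of any special pair, hence has trivial stabilizer in $\Deck(f^k)\cong V_4$; by Orbit--Stabilizer the orbit of $\alpha$ has four elements, so the evaluation map $\tau\mapsto\tau(\alpha)$ is injective and uniqueness is immediate. You instead push each $\tau\in\{\psi,\eta\}$ down through the semiconjugacy of Lemma~\ref{Tomlemma} to get $\mu\circ f=f\circ\tau$, evaluate at the fixed point $\alpha$, and derive a contradiction from $f(\beta_{m-1})=\alpha\neq\beta_{m-1}$. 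Your route is a bit longer and leans on the descent machinery (Lemmas~\ref{Klein4lemma} and~\ref{Tomlemma}) rather than the special-pairs description; the paper's route is slicker but requires knowing that the six special points are exactly $\mathcal{C}_f\cup f^{-1}(c_1)\cup f^{-1}(c_2)$ (Proposition~\ref{p:specialpairs2}) so that one can check $\alpha$ avoids them. Your reduction to $k=2$ via $\Deck(f^2)\subseteq\Deck(f^k)$ and Proposition~\ref{p:characterizecritcoal} is a nice touch that the paper leaves implicit.
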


\begin{proof}
The assumption on $f$ means that $\alpha$ is not an element of one of the special pairs of $f$. Thus the orbit of $\alpha$ under the action of $\Deck(f^k) \cong V_4$ consists of four elements, and each element in this orbit is the image of $\alpha$ for a unique element of $\Deck(f^k)$. Since $f^{-1}(\alpha) = \{ \alpha, \beta_{m-1} \}$ and elements of the deck group are fiber-preserving, we see that if $\mu$ is the unique non-identity element of $\Deck(f)$ then $\mu(\alpha) = \beta_{m-1}$.
\end{proof}

\begin{corollary}\label{cor:detectalphabeta}
If we can detect the elements $\alpha$ and $\beta_{m-1}$, then we can detect $\mathcal{C}_f$ and $\mathcal{V}_f$.
\end{corollary}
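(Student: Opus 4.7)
The plan is to first extract $\mu \in \Deck(f) \subseteq \Deck(f^k)$ from $f^k$, then read off $\mathcal{C}_f$ as $\mathrm{Fix}(\mu)$, and finally recover $\mathcal{V}_f$ as a $\mu$-invariant subset of an explicitly computable set of candidates.

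The preceding lemma singles out the non-identity element $\mu$ of $\Deck(f)$ as the unique element of $\Deck(f^k) \cong V_4$ sending $\alpha$ to $\beta_{m-1}$. Since $\alpha$ and $\beta_{m-1}$ are assumed detectable from $f^k$, and $\Deck(f^k)$ is itself determined by $f^k$, this pins down $\mu$ among the four elements of $\Deck(f^k)$. Lemma~\ref{Klein4lemma} then gives $\mathcal{C}_f = \mathrm{Fix}(\mu)$, so $\mathcal{C}_f$ is detected.

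For $\mathcal{V}_f$, I would use Lemma~\ref{l:coalescing3pts} to form the set $S := \{x \in \hat{\mathbb{C}} \mid f^{-k}(x) \subseteq \mathcal{C}_{f^k}\} = \mathcal{V}_f \cup \{w\}$, where $w = f(v_1) = f(v_2)$, and then claim $\mathcal{V}_f = S \cap \mu(S)$. The inclusion $\mathcal{V}_f \subseteq S \cap \mu(S)$ is immediate from Lemma~\ref{Tomlemma}, which guarantees $\mu(\mathcal{V}_f) = \mathcal{V}_f$. The reverse inclusion is the main (small) obstacle and reduces to verifying $\mu(w) \notin S$ when $w \notin \mathcal{V}_f$, i.e.\ when $m \geq 2$ (the case $m = 1$ is trivial, as then $w \in \mathcal{V}_f$ and $S = \mathcal{V}_f$).

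To handle $m \geq 2$, I would split into sub-cases. First, $w \notin \mathcal{C}_f$: otherwise the critical orbit would form a $2$-cycle $c_1 \to v_1 \to c_1$, so $P_f$ would be $\{c_1, c_2, v_1, v_2\}$ and contain no fixed point, contradicting the standing hypothesis. Since $\mathrm{Fix}(\mu) = \mathcal{C}_f$, this gives $\mu(w) \neq w$. Next, if $\mu(w) \in \{v_1, v_2\}$, then $w = \mu^2(w) \in \{v_2, v_1\} = \mathcal{V}_f$, forcing $m = 1$ and contradicting the case assumption. Hence $\mu(w) \notin S$, completing the reverse inclusion and showing $\mathcal{V}_f = S \cap \mu(S)$ is detectable from $f^k$ and $\mu$.
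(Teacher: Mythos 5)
Your approach is essentially the one the paper uses: recover $\mu$ as the unique element of $\Deck(f^k)$ with $\mu(\alpha) = \beta_{m-1}$, set $\mathcal{C}_f = \mathrm{Fix}(\mu)$, and intersect the three-point candidate set from Lemma~\ref{l:coalescing3pts} with its $\mu$-image. Your explicit formulation $\mathcal{V}_f = S \cap \mu(S)$, together with the verification that $\mu(w) \notin S$, spells out details that the paper's proof leaves compressed, which is fine.

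One misattribution should be fixed. You invoke Lemma~\ref{Tomlemma} to conclude $\mu(\mathcal{V}_f) = \mathcal{V}_f$, but that lemma only gives $\nu(\mathcal{V}_f) = \mathcal{V}_f$ for the map $\nu$ satisfying $\nu \circ f = f \circ \mu$. Since $\mu \in \Deck(f)$ we have $f \circ \mu = f$, so $\nu = \mathrm{id}$ and the lemma says nothing useful about how $\mu$ itself acts on $\mathcal{V}_f$. The correct justification is Proposition~\ref{p:elementaryfacts} part~\eqref{i:DeckGroupFixesFibers}: since $m \geq 2$, $\beta_1 \notin \mathcal{V}_f$, so $f^{-1}(\beta_1) = \{v_1, v_2\}$ is a genuine two-point fiber, which $\mu \in \Deck(f)$ must preserve; and since $\mathrm{Fix}(\mu) = \mathcal{C}_f$ is disjoint from $\mathcal{V}_f$ by Lemma~\ref{l:criticallyCoalescing4DistinctPts}, $\mu$ must transpose $v_1$ and $v_2$. (Incidentally, in your sub-case $w \in \mathcal{C}_f$ the set $P_f$ would be $\{v_1, v_2, c_1\}$ rather than $\{c_1, c_2, v_1, v_2\}$, though this does not affect the conclusion that it contains no fixed point.) With these corrections your argument is complete and matches the paper's.
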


\begin{proof}
As noted above, if we know the points $\alpha$ and $\beta_{m-1}$ we can recover the unique non-identity element $\mu$ of $\Deck(f)$ since it is characterized by the property that $\mu(\alpha) = \beta_{m-1}$. We can then detect the elements of $\mathcal{C}_f$, since they are precisely the fixed points of this element $\mu$. To detect the critical values, we see that by virtue of Lemma~\ref{l:coalescing3pts}, we can narrow down the options for $\mathcal{V}_f$ to the elements of the set $\{ v_1, v_2, \beta_1 \}$. But since $f(v_1) = f(v_2)$, we see that, using the deck transformation $\mu$ found above, $\mu(v_1) = v_2$ and $\mu(v_2) = v_1$. This allows us to distinguish $v_1$ and $v_2$ from $\beta_{1}$, and so we can detect the set $\mathcal{V}_f$.
\end{proof}

When $m>2$, it is possible to detect the point $\beta_{m-1}$ using purely combinatorial arguments. However, the case $m=2$, which we will deal with first, requires some further work. In this case, $P_f = \{ v_1, v_2, \beta, \alpha \}$ and $f$ is a quadratic Latt\`{e}s map with the following critical portrait.
\[
\xymatrix{
c_1\ar@{->}[r]^{2:1} & v_1\ar[dr]  & \\
& &\beta \ar[r] & \alpha \ar@(ur,dr) \\
c_2\ar@{->}[r]^{2:1}  & v_2\ar[ur]  & 
}
\]
Recall that the cross-ratio $[v_1:v_2:\alpha:\beta]$ is $z(\beta)$ where $z:\hatC \to \hatC$ is the global coordinate which satisfies $z(v_1)=0$, $z(v_2)=\infty$ and $z(\alpha)=1$. 

\begin{lemma}\label{lemma:crossratio}
Assume $f$ is a quadratic rational map with critical values $v_1$ and $v_2$ satisfying $f(v_1)=f(v_2) = \beta$, $f(\beta)=\alpha$ and $f(\alpha) = \alpha$ with $\alpha \neq \beta$. Then, 
\[[v_1:v_2:\alpha:\beta]\in \left\{-1,3\pm 2\sqrt{2}\right\}.\]
\end{lemma}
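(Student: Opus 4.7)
The idea is to put the configuration in a normal form and then carry out a short algebraic elimination. First I would conjugate $f$ by a M\"obius transformation sending $v_1 \mapsto 0$, $v_2 \mapsto \infty$, and $\alpha \mapsto 1$. (All four of $v_1, v_2, \alpha, \beta$ are distinct, as one checks directly from the hypotheses using that $\alpha$ is fixed and $\alpha \neq \beta$, so such a normalization exists.) After the conjugation, the cross-ratio $[v_1:v_2:\alpha:\beta]$ is simply $\beta$; call this value $\lambda$.

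Since the critical values are now $0$ and $\infty$, and since a quadratic rational map has two distinct critical points each of local degree $2$, the map $f$ takes the form $f(z) = k(z-c_1)^2/(z-c_2)^2$ with both $c_i$ finite (the polynomial and ``inverted polynomial'' alternatives are ruled out by $f(\infty) = \lambda \notin \{0,\infty\}$). The conditions $f(0) = \lambda = f(\infty)$ force $k = \lambda$ and $c_1^2 = c_2^2$, hence $c_1 = -c_2$ since the critical points are distinct; writing $c_2 = c$, this yields
\[
f(z) \;=\; \lambda\,\frac{(z+c)^2}{(z-c)^2}.
\]

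Now I would impose the remaining two conditions $f(1) = 1$ and $f(\lambda) = 1$, which read
\[
\lambda = \left(\frac{1-c}{1+c}\right)^{\!2}, \qquad \lambda = \left(\frac{\lambda-c}{\lambda+c}\right)^{\!2}.
\]
Equating square roots gives $\frac{1-c}{1+c} = \pm \frac{\lambda-c}{\lambda+c}$. The ``$+$'' sign reduces to $c(1-\lambda) = 0$, which is excluded by the hypotheses ($c = 0$ would collapse the critical points, and $\lambda = 1$ would mean $\beta = \alpha$). The ``$-$'' sign reduces to $\lambda = c^2$; plugging this back into $\lambda(1+c)^2 = (1-c)^2$ and factoring yields $c(1+c) = \pm(1-c)$. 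The $+$ branch gives $c^2 + 2c - 1 = 0$, so $c = -1 \pm \sqrt{2}$ and $\lambda = c^2 = 3 \mp 2\sqrt{2}$; the $-$ branch gives $c^2 = -1$, so $\lambda = -1$.

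The proof is essentially all elementary algebra; the only place requiring care is the two sign-case analyses, where one must confirm that the discarded ``$+$'' subcases correspond exactly to the two degenerate configurations ruled out by the hypotheses (coincident critical points, and $\alpha = \beta$), so no admissible solution is silently lost. Once that bookkeeping is done, the three listed values $-1$ and $3 \pm 2\sqrt{2}$ exhaust the possibilities.
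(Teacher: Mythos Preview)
Your proof is correct and follows essentially the same approach as the paper: normalize so that $(v_1,v_2,\alpha)\mapsto(0,\infty,1)$, introduce an auxiliary parameter (your $c$, the paper's $\kappa$) locating the critical points, and reduce to the same quartic condition whose roots give $\lambda\in\{-1,\,3\pm 2\sqrt{2}\}$. The only cosmetic difference is that the paper packages the computation via two coordinate charts $w,z$ with $z\circ f=w^2$ rather than writing $f$ explicitly, but the algebra and case analysis are identical.
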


\begin{proof}
Let us assume that the critical points of $f$ are $c_1$ and $c_2$ with associated critical values $v_1= f(c_1)$ and $v_2 = f(c_2)$. Let $w:\hatC \to \hatC$ and $z:\hatC \to \hatC$ be the global coordinates defined by 
\[w(c_1) = 0, \quad w(c_2) = \infty,\quad  w(\alpha)=1,\quad z(v_1) = 0,\quad z(v_2) = \infty \text{ and } z(\alpha)=1.\]
Then, $z\circ f$ and $w^2$ both have a double zero at $c_1$ and a double pole at $c_2$, and both take the value $1$ at $\alpha$. It follows that $z\circ f = w^2$ (note that here we use $w^2$ to denote the square of $w$, not the second iterate, as is the case in the rest of the paper). 

As a consequence,  setting $\kappa = w(v_1)$, we get
\[\kappa^2 = w^2(v_1) = z\circ f(v_1) = z(\beta) = z\circ f(v_2) = w^2(v_2),\]
so that $w(v_2)=-\kappa$. Since $w$ sends $(v_1,v_2,\alpha)$ to respectively $(\kappa,-\kappa,1)$ and $z$ sends $(v_1,v_2,\alpha)$ to respectively $(0,\infty,1)$, we have that 
\[z = \frac{(\kappa+1)(\kappa-w)}{(\kappa-1)(\kappa+w)}.\]
Thus
\[\kappa^2 = z(\beta) =\frac{(\kappa+1)(\kappa-w(\beta))}{(\kappa-1)(\kappa+w(\beta))}= \left(\frac{\kappa+1}{\kappa-1}\right)^2.\]
This forces \[\kappa\in \left\{\pm i,1\pm\sqrt{2}\right\} \text{ and } z(\beta) = \kappa^2 \in \left\{-1, 3\pm2\sqrt{2}\right\}.\qedhere\]
\end{proof}


\begin{lemma}\label{l:detectcpscvsmequals2}
 Let $f$ be critically coalescing with $f(v_1)=f(v_2) = \beta$, $f(\beta)=\alpha$ and $f(\alpha) = \alpha$ with $\alpha \neq \beta$. If $g$ is a rational map such that $f^k = g^k$ for some $k \geq 1$, then $\mathcal{V}_f = \mathcal{V}_g$ and $\mathcal{C}_f = \mathcal{C}_g$. In particular, we can detect $\mathcal{C}_f$ and $\mathcal{V}_f$ from $f^k$.
\end{lemma}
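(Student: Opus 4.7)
The plan is to show $\mathcal{C}_g=\mathcal{C}_f$ and then deduce $\mathcal{V}_g=\mathcal{V}_f$ from Corollary~\ref{cor:detectalphabeta}. As a preliminary, the equation $g^k=f^k$ forces $\deg g=2$, and from $\Deck(g^k)=\Deck(f^k)\cong V_4$ together with Proposition~\ref{p:characterizecritcoal}, $g$ must also be critically coalescing. Applying Proposition~\ref{p:specialpairs2} to $g$, the set $\mathcal{C}_g$ coincides with one of the three special pairs of $\Deck(f^k)$, namely $\mathcal{C}_f$, $f^{-1}(c_1)=\{a_1,a_2\}$, or $f^{-1}(c_2)=\{b_1,b_2\}$. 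By the symmetry interchanging $c_1$ and $c_2$, it suffices to rule out the case $\mathcal{C}_g=\{a_1,a_2\}$.

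Assume toward a contradiction that $\mathcal{C}_g=\{a_1,a_2\}$, and set $v_i^g=g(a_i)$ and $\beta^g=g(v_1^g)=g(v_2^g)$. The case $k=2$ is immediate: by critical coalescence of $g$, $g^2(a_1)=g^2(a_2)=\beta^g$, but $f^2(a_1)=f(c_1)=v_1\neq v_2=f(c_2)=f^2(a_2)$, contradicting $f^2=g^2$.

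For $k\ge 3$ the preceding obstruction vanishes since $f^k(a_1)=f^k(a_2)$, and a more refined argument is needed. The idea is to use orbit-chasing through $g^k=f^k$ to force a rigid postcritical structure on $g$: tracing the orbits of $a_1$, $v_1^g$, $\beta^g$, $\ldots$ shows that $g$ must carry the dynamically distinguished points $\alpha$ and $\beta$ into specific dynamical positions (e.g. $g^k$-fixed in the postcritical set, preimage of a $g$-periodic point along the critical orbit, etc.), and hence so do their $\psi$-images for $\psi\in\Deck(g)\subset\Deck(f^k)$. Since a quadratic rational map is rigidly determined by its critical data, the combination of prescribed critical points $\{a_1,a_2\}$ with these forced postcritical incidences confines $g$ to a very narrow family. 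Now apply Lemma~\ref{lemma:crossratio} to $f$: the cross-ratio $[v_1:v_2:\alpha:\beta]$ is forced into $\{-1,\,3\pm 2\sqrt{2}\}$, which fixes the positions of $v_1,v_2$ in explicit normalized coordinates and in turn pins down $\{a_1,a_2\}=f^{-1}(c_1)$ and $\{b_1,b_2\}=f^{-1}(c_2)$ as specific fibers. The plan is then a case-by-case verification, over the three cross-ratio values, that no quadratic rational map with critical points at $\{a_1,a_2\}$ fits simultaneously into the forced postcritical scheme and the explicit coordinates dictated by Lemma~\ref{lemma:crossratio}.

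The hard part is this case-by-case check in coordinates: the orbit-chasing argument must be carried out carefully to exclude spurious periodic alternatives for $\alpha$ under $g$ (a priori, $\alpha$ need only have $g$-period dividing $k$), and each of the three cross-ratio scenarios must be ruled out separately. Once $\mathcal{C}_g=\mathcal{C}_f$ is established, $\Deck(g)=\Deck(f)$ is the unique order-two subgroup of $\Deck(f^k)\cong V_4$ with fixed set $\mathcal{C}_f$; the fixed point $\alpha$ and its non-trivial $f$-preimage $\beta$ are intrinsic to $f^k$ and shared by $g^k$, so Corollary~\ref{cor:detectalphabeta} yields $\mathcal{V}_g=\mathcal{V}_f$, completing the proof.
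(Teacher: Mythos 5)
Your proposal contains a genuine gap, and also a computational slip that would mislead any attempt to fill it.

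The slip: in your $k=2$ case you claim $f^2(a_1)=v_1\neq v_2=f^2(a_2)$. But $a_1,a_2$ are \emph{both} elements of $f^{-1}(c_1)$, so $f(a_1)=f(a_2)=c_1$ and therefore $f^2(a_1)=f^2(a_2)=v_1$. The ``immediate'' contradiction for $k=2$ does not exist; in fact $g^2(a_1)=g^2(a_2)$ is exactly what $f^2$ gives you, so nothing is ruled out by this step. The $k\ge 3$ part is explicitly left as a plan (``orbit-chasing\ldots confines $g$ to a very narrow family\ldots case-by-case check in coordinates''), and you flag yourself that the hard part is unfinished. So as written the argument does not establish $\mathcal{C}_g=\mathcal{C}_f$ for any $k$.

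The paper goes by a different and much shorter route, attacking $\mathcal{V}_g$ rather than $\mathcal{C}_g$: $\alpha$ is recoverable from $f^k$ alone (it is the unique point of $P_f$ whose $f^k$-fiber contains noncritical points, by Lemma~\ref{l:coalescing3pts}), and that same lemma shows $\mathcal{V}_g$ is a two-element subset of $S=\{v_1,v_2,\beta\}$. Now Lemma~\ref{lemma:crossratio}, applied to $g$ (with the intrinsic $\alpha$ and the third element of $S$ playing the role of $g$'s ``$\beta$''), forces $[v^g_1:v^g_2:\alpha:\beta^g]\in\{-1,3\pm2\sqrt2\}$; but if $\mathcal{V}_g$ were $\{v_1,\beta\}$ or $\{v_2,\beta\}$, a direct cross-ratio computation lands in $\{\tfrac12,\tfrac{1\pm\sqrt2}{2}\}$ instead. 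Hence $\mathcal{V}_g=\mathcal{V}_f$, $\beta$ is the leftover point of $S$, and Corollary~\ref{cor:detectalphabeta} then gives $\mathcal{C}_g=\mathcal{C}_f$. Your instinct to invoke Lemma~\ref{lemma:crossratio} and Corollary~\ref{cor:detectalphabeta} is the right one, but the lever should be pushed on $\mathcal{V}_g$ via the cross-ratio rigidity directly, which avoids the special-pair classification and the uncompleted coordinate case-check entirely.
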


\begin{proof}
 By Corollary~\ref{cor:detectalphabeta}, we need to show we can recover the elements $\alpha$ and $\beta$. Firstly, by Lemma~\ref{l:coalescing3pts}, we see that $\alpha$ is the unique element of $P_f$ for which $\rho_{f^k}(\alpha)$ contains points which are not critical points of $f^k$. To recover $\beta$, again note that following Lemma~\ref{l:coalescing3pts}, the set $\mathcal{V}_g$ consists of two points from the triple $S = \{ v_1, v_2 , \beta \}$. By Lemma~\ref{lemma:crossratio}, we know that
 \[
  [v_1:v_2:\alpha:\beta]\in \left\{-1,3\pm 2\sqrt{2}\right\}.
 \]
If $\mathcal{V}_g \neq \mathcal{V}_f$, then either $\mathcal{V}_g = \{ v_1, \beta \}$ or $\mathcal{V}_g = \{ v_2, \beta \}$.

In the first case we would have
\[
 [\beta : v_1 : \alpha : v_2 ] \in \left\{ \frac{1}{2} , \frac{1\pm \sqrt{2}}{2} \right\} 
\]
and in the second case, we have
\[
 [\beta : v_2 : \alpha : v_1] = \left\{ \frac{1}{2} , \frac{1\pm \sqrt{2}}{2} \right\}.
\]
In either case $g$ would contradict the conclusion of Lemma~\ref{lemma:crossratio}. Thus $\mathcal{V}_f = \mathcal{V}_g$, and so we can recover $\beta$ as the unique element of $S$ which does not belong to $\mathcal{V}_f$. Thus we can detect $\mathcal{C}_f$ and $\mathcal{V}_f$.
\end{proof}

Now assume that $m > 2$. We now show that we can detect the critical points of $f$ from $f^k$.

\begin{lemma}\label{l:notneeded}
If $k \leq m+1$ then we can detect the critical points and critical values of $f$ from $f^k$.
\end{lemma}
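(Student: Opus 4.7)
Throughout, I assume $m\geq 3$ (the remaining $m=2$ case is Lemma~\ref{l:detectcpscvsmequals2}) and $2\leq k\leq m+1$, and write $F=f^k$. By Corollary~\ref{cor:detectalphabeta} it suffices to produce $\alpha$ and $\beta_{m-1}$, and by Lemma~\ref{Klein4lemma} the latter will follow once $\mathcal{C}_f$ is singled out: the unique non-identity element of $\Deck(F)\cong V_4$ fixing $\mathcal{C}_f$ pointwise is the generator $\mu$ of $\Deck(f)$, and then $\beta_{m-1}=\mu(\alpha)$. So the plan reduces to detecting $\alpha$ and detecting $\mathcal{C}_f$.

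The point $\alpha$ is recovered as the unique fixed point of $F$ lying in $P_F=P_f$: $\alpha$ is $f$-fixed hence $F$-fixed, while every other element of $P_f$ is strictly preperiodic under $f$ and therefore cannot be $F$-fixed. For $\mathcal{C}_f$, I would count, for each critical value $w\in\mathcal{V}_F$, the cardinality of $F^{-1}(w)\cap\mathcal{C}_F$. The structural input is that $\mathcal{C}_F=\bigcup_{j=0}^{k-1}f^{-j}(\mathcal{C}_f)$ is a disjoint union of ``levels'' with $|f^{-j}(\mathcal{C}_f)|=2^{j+1}$, and under $F$ each level $j\leq k-2$ maps to the single point $f^{k-j}(c_1)=f^{k-j}(c_2)$ of the postcritical chain, while level $j=k-1$ distributes $2^{k-1}$ points to $v_1$ and $2^{k-1}$ to $v_2$. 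Because $k\leq m+1$, the level-$0$ image (which is $\beta_{k-1}$ when $k\leq m$, and $\alpha$ when $k=m+1$) is distinct from every level-$j$ image for $j\geq 1$, so for $k\geq 3$ there is a unique critical value $w^\ast$ hit by exactly $2$ critical points of $F$, and $\mathcal{C}_f=F^{-1}(w^\ast)\cap\mathcal{C}_F$. The bound $k\leq m+1$ is sharp: at $k=m+2$ the level-$0$ and level-$1$ images both collapse to $\alpha$ and the hit count at $\alpha$ jumps to $6$, so $\mathcal{C}_f$ is no longer singled out.

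The case $k=2$ requires a separate small argument because levels $0$ and $1$ each produce hit count $2$. Here I use Lemma~\ref{l:coalescing3pts} to extract $S=\mathcal{V}_f\cup\{\beta_1\}$, and single out $\beta_1$ as the unique element of $S$ whose $F$-image $\beta_3$ differs from the common $F$-image $\beta_2=F(v_1)=F(v_2)$ of the other two; distinctness of $\beta_2$ and $\beta_3$ as points of $\hat{\mathbb{C}}$ uses $m\geq 3$. Then $\mathcal{C}_f=F^{-1}(\beta_1)$.

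With $\mathcal{C}_f$ in hand, Lemma~\ref{Klein4lemma} supplies the involution $\mu\in\Deck(F)\cap\Deck(f)$, and I would recover $\mathcal{V}_f$ as $\{y\in S:\mu(y)\in S\}$: since $\mu$ swaps $v_1,v_2\in S$ while $\mu(\beta_1)$ is the other $f$-preimage of $\beta_2$, one checks that $\mu(\beta_1)\notin P_f$ whenever $m\geq 3$ by ruling out coincidences along the chain $v_1,v_2,\beta_1,\ldots,\beta_{m-1},\alpha$. The main obstacle I anticipate is the careful bookkeeping of the level-count argument, especially the verification that no level $j\geq 1$ accidentally hits the level-$0$ image throughout the full range $k\leq m+1$; once that is done, the extraction of $\mu$ and $\mathcal{V}_f$ is formal.
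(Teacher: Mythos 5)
Your argument is correct and takes a genuinely different route from the paper's one-line proof, which reduces to Corollaries~\ref{cor:detectcpscritcoalnofixed} and \ref{cor:detectcvscritcoalnofixed} on the strength of the two inequalities $f^k(c_1)\neq f^k(a_1)$ and $f^k(v_1)\neq f^k(\beta_1)$. You instead adapt the fiber-counting scheme of Lemma~\ref{l:numberofcritpointsinfiber} and Proposition~\ref{p:detectcritptscritcoalfixed} (used in the paper only for $k>m$) down to the range $3\leq k\leq m+1$: you decompose $\mathcal{C}_F$ into levels $f^{-j}(\mathcal{C}_f)$, observe the level-$0$ image is the unique element of $\mathcal{V}_F$ with hit count $2$ once $k\geq 3$, handle $k=2$ by the direct argument on $S=\{v_1,v_2,\beta_1\}$, and then extract $\mathcal{V}_f$ through $\mu$ and Corollary~\ref{cor:detectalphabeta}. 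This buys you something concrete: the paper's inequality $f^k(v_1)\neq f^k(\beta_1)$ actually fails at $k=m$ and $k=m+1$ (both sides equal $\alpha$), so Corollary~\ref{cor:detectcvscritcoalnofixed} cannot be invoked verbatim at those iterates; routing the critical-value detection through $\mu$ rather than the image inequality sidesteps that gap, which the paper otherwise only patches with the subsequent remark that the lemma is dispensable. Two small simplifications are available: you re-derive $\mathcal{C}_f$ from Corollary~\ref{cor:detectalphabeta} after already having produced it, when only the $\mathcal{V}_f$ half of that corollary is needed; and at the end the weaker claim $\mu(\beta_1)\notin S$ suffices and is immediate, since $\mu(\beta_1)$ is the second $f$-preimage of $\beta_2$ while $f(v_i)=\beta_1\neq\beta_2$ and $\beta_1\notin\mathrm{Fix}(\mu)=\mathcal{C}_f$, so there is no need to rule out coincidences along the whole postcritical chain.
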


\begin{proof}
This is essentially the same as Corollaries~\ref{cor:detectcpscritcoalnofixed} and \ref{cor:detectcvscritcoalnofixed}, since in this case we have $f^k(c_1) \neq f^k(a_1)$ and $f^k(v_1) \neq f^k(\beta_1)$.
\end{proof}

We remark that we don't actually need the above result, since $P_f$ contains a fixed point, we may take $n$ large enough so that $nk > m$, and then apply the analysis given below to the map $f^{nk}$. 

To prove the case where $k\geq m>2$, we first need to count the number of critical points in the fiber above each element of $P_f = V_{f^k}$.
Using this notation, we have the following.

\begin{lemma}\label{l:numberofcritpointsinfiber}
Let $k > m > 2$. Then:
\begin{enumerate}
 \item there are exactly $2^{k-1}$ critical points in the fiber above $z$ if and only if $z \in \{ v_1,v_2, \beta_1 \}$.
 \item for $2 \leq j \leq m-1$ there are exactly $2^{k-j}$ critical points in the fiber above $\beta_{j}$. 
  \item there are exactly $2^{k-(m-1)} - 2$ critical points above $\alpha = f^m(v_1)$.
\end{enumerate}
\end{lemma}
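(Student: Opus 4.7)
The plan is to reduce the count to a simple recursion using two observations. First, since the hypothesis guarantees that every critical point of $f^{k}$ is simple, each critical point of $f^{k}$ contributes local degree $2$ to $f^{k}$ while each non-critical preimage contributes local degree $1$. Summing local degrees over the fiber and using $\deg f^{k} = 2^{k}$ gives
\[
2 N(z) + (p_{k}(z) - N(z)) = 2^{k},
\]
where $N(z)$ denotes the number of critical points of $f^{k}$ in $f^{-k}(z)$ and $p_{k}(z) := |f^{-k}(z)|$. Hence $N(z) = 2^{k} - p_{k}(z)$, and it suffices to compute $p_{k}(z)$ for each $z \in P_{f}$.

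Next I would set up the recursion for $p_{k}$. Since $f$ is a function, $f^{-1}(w) \cap f^{-1}(w') = \varnothing$ for $w \neq w'$, so
\[
p_{k+1}(z) = \sum_{w \in f^{-k}(z)} |f^{-1}(w)| = 2 p_{k}(z) - V_{k}(z),
\]
where $V_{k}(z) := |f^{-k}(z) \cap \mathcal{V}_{f}|$, because $|f^{-1}(w)| = 1$ precisely when $w \in \mathcal{V}_{f} = \{v_{1},v_{2}\}$, and $|f^{-1}(w)| = 2$ otherwise. Iterating with $p_{0}(z) = 1$ yields
\[
p_{k}(z) = 2^{k} - \sum_{j=0}^{k-1} 2^{k-1-j}\, V_{j}(z).
\]

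The third step is to read off $V_{j}(z)$ from the orbit diagram. Since $v_{1}$ and $v_{2}$ have the common forward orbit $v_{i}\mapsto \beta_{1}\mapsto \cdots \mapsto \beta_{m-1}\mapsto \alpha\mapsto \alpha$, we have $V_{0}(z) = \mathbf{1}[z\in \mathcal{V}_{f}]$, and for $j\geq 1$
\[
V_{j}(z) = \begin{cases} 2 & \text{if } z = \beta_{j},\ 1\le j\le m-1,\\ 2 & \text{if } z = \alpha,\ j\ge m,\\ 0 & \text{otherwise}. \end{cases}
\]
Substituting into the closed form for $p_{k}$ and using $N(z) = 2^{k}-p_{k}(z)$ gives: for $z\in\{v_{1},v_{2}\}$, $N(z) = 2^{k-1}$; for $z = \beta_{j}$ with $1\le j\le m-1$, $N(z) = 2^{k-j}$ (so $N(\beta_{1}) = 2^{k-1}$ also); and for $z = \alpha$,
\[
N(\alpha) = 2\sum_{j=m}^{k-1} 2^{k-1-j} = 2^{k-(m-1)} - 2,
\]
which is exactly the three claims of the lemma.

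The only potential obstacle is bookkeeping, namely ensuring the orbit of $\{v_{1},v_{2}\}$ is tracked correctly so that the $V_{j}(z)$ table above is complete; but this follows immediately from the given critical portrait of $f$ and the hypothesis that $P_{f} = V_{f^{k}}$ has exactly $m+2$ distinct points. No further delicate argument is needed.
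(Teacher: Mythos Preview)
Your proof is correct and takes a genuinely different route from the paper's. The paper argues part (i) by appeal to Lemma~\ref{l:coalescing3pts}, proves part (ii) by a direct induction on $k$ (pulling back the fiber over $\beta_j$ one step and noting that the non-postcritical preimage of $\beta_j$ contributes no new critical points), and then obtains part (iii) by subtracting the counts in (i) and (ii) from the total $2^{k+1}-2$. Your approach instead converts the problem via the local-degree identity $N(z) = 2^k - p_k(z)$ (valid because all critical points of $f^k$ are simple) into a fiber-size computation, and then controls $p_k(z)$ through the recursion $p_{k+1}(z) = 2p_k(z) - V_k(z)$, reading the correction terms $V_j(z)$ directly from the critical portrait.

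Your method has the advantage of being uniform: a single closed-form expression handles all three cases simultaneously, and the ``only if'' direction of part (i) falls out automatically once you observe that $N(z)=0$ for $z\notin P_f$ and that the remaining values $2^{k-j}$ (for $j\ge 2$) and $2^{k-(m-1)}-2$ are strictly smaller than $2^{k-1}$. The paper's approach is more elementary in the sense that it avoids the Riemann--Hurwitz-style bookkeeping and works directly with the inductive structure of the critical orbit, at the cost of treating the three cases separately. Both arguments ultimately rest on the same combinatorial content of the critical portrait.
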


\begin{proof}\mbox{}
\begin{enumerate}
 \item This first claim follows from Lemma~\ref{l:coalescing3pts}.
 \item We proceed by induction on $k$. For $k=m+1$ it is easy to verify the claim, so assume that for some $k \geq m+1$ the statement holds. Observe that the fiber over $\beta_{j}$ under $f^{k+1}$ is the union of the fibers over the elements of $f^{-1}(\beta_j)$ under $f^k$ and that $f^{-1}(\beta_j) = \{ \beta_{j-1}, \zeta \}$ where $\zeta \notin P_f$. Thus by the observation the fiber over $\beta_j$ under $f^{k+1}$ is equal to the union of the fibers over $\beta_{j-1}$ and $\zeta$ under $f^k$. Since $\zeta$ is not postcritical, there are no critical points in the fiber over it. Therefore the critical points in the fiber over $\beta_{j}$ under $f^{k+1}$ are precisely those over $f^{k}$. The claim follows by the inductive hypothesis.
 \item To get the last claim, we can use the fact that the total number of critical points for $f^k$ is $2^{k+1} - 2$. Summing the number of critical points (which are all simple) from the first two cases, we see there are exactly $2^{k-(m-1)} - 2$ critical points unaccounted for. These must lie in the fiber over $\alpha$.
\end{enumerate}
\end{proof}

We are now able to prove the following.

\begin{proposition}\label{p:detectcritptscritcoalfixed}
If $k > m > 2$, we can detect the critical points and critical values of $f$ from $f^k$. 
\end{proposition}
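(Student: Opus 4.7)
The plan is to identify the postcritical points $\alpha$ and $\beta_{m-1}$ from data available in $f^k$ alone, and then invoke Corollary~\ref{cor:detectalphabeta} to conclude. First, since $k > m$, every element of $P_f$ already appears as a critical value of $f^k$, so $V_{f^k} = P_f = \{v_1, v_2, \beta_1, \dotsc, \beta_{m-1}, \alpha\}$; in particular, from $f^k$ we recover $P_f$ and the integer $m = |V_{f^k}| - 2$.

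Next, for each $z \in V_{f^k}$ we count the number of critical points of $f^k$ lying in the fiber $\rho_{f^k}(z)$; this count is manifestly determined by $f^k$. By Lemma~\ref{l:numberofcritpointsinfiber}, the count equals $2^{k-1}$ for each of the three points $v_1$, $v_2$, $\beta_1$; equals $2^{k-j}$ for $z = \beta_j$ with $2 \leq j \leq m-1$; and equals $2^{k-m+1} - 2$ for $z = \alpha$. Since $k > m > 2$, the values $2^{k-1}, 2^{k-2}, \dotsc, 2^{k-m+1}$ form a strictly decreasing sequence of powers of $2$, and $2^{k-m+1} - 2$ is strictly smaller than every term of that sequence. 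Consequently $\alpha$ is the unique element of $V_{f^k}$ whose fiber under $f^k$ contains strictly fewer than $2^{k-m+1}$ critical points, and $\beta_{m-1}$ is the unique element of $V_{f^k}$ whose fiber contains exactly $2^{k-m+1}$ critical points.

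Having detected both $\alpha$ and $\beta_{m-1}$, Corollary~\ref{cor:detectalphabeta} yields $\mathcal{C}_f$ and $\mathcal{V}_f$ directly: the non-identity element $\mu \in \Deck(f)$ is recovered as the unique element of $\Deck(f^k)$ sending $\alpha$ to $\beta_{m-1}$, whence $\mathcal{C}_f = \mathrm{Fix}(\mu)$, and $\mathcal{V}_f$ is then isolated inside $\{v_1, v_2, \beta_1\}$ as the unique pair swapped by $\mu$.

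The main substantive work has already been carried out in Lemma~\ref{l:numberofcritpointsinfiber} and Corollary~\ref{cor:detectalphabeta}, so no real obstacle remains. The only step requiring any verification is the numerical distinctness of the fiber counts, and this is immediate from the inequality $2^{k-m+1} - 2 < 2^{k-m+1}$, which cleanly separates the count attached to $\alpha$ from those attached to any $\beta_j$.
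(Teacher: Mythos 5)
Your proof is correct and follows essentially the same route as the paper's: apply Lemma~\ref{l:numberofcritpointsinfiber} to read off $\alpha$ and $\beta_{m-1}$ from the number of critical points of $f^k$ in each fiber over $V_{f^k}$, then invoke Corollary~\ref{cor:detectalphabeta}. You spell out the numerical separation ($2^{k-m+1}-2 < 2^{k-m+1} \leq 2^{k-j}$) that the paper leaves implicit, which is a helpful clarification but not a different argument.
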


\begin{proof}
Using Lemma~\ref{l:numberofcritpointsinfiber}, we can pick out the elements $\alpha$ and $\beta_{m-1}$ in $P_f$ by looking at the number of critical points in the fiber above each point of $P_f$. Hence by Corollary~\ref{cor:detectalphabeta}, we can detect $\mathcal{C}_f$ and $\mathcal{V}_f$.
\end{proof}

We are now ready to prove Theorem~\ref{t:DetectQuadraticCritPts}.

\begin{proof}[Proof of Theorem~\ref{t:DetectQuadraticCritPts}]
The claims in part (i) follow from Lemma~\ref{l:powermaps}, Proposition~\ref{p:dihedralimpliescoalescing}  and Corollary~\ref{cor:detectcpsnocritcoal}. Parts (ii) and (iii) follow from the combination of Propositions~\ref{p:characterizecritcoal},~\ref{p:critcoaldecks},~\ref{p:detectcritptscritcoalfixed}, along with Lemmas~\ref{l:detectcpscvsmequals2}, \ref{l:notneeded} and Corollary~\ref{cor:detectcpscritcoalnofixed}. 
\end{proof}

This also completes the proof of Thoerem~\ref{mthm3}.

\section{Bicritical rational maps with shared iterates}\label{s:sharediterates}

We begin this section with a proof of Theorem~\ref{mthm1}.

\begin{proof}[Proof of Theorem~\ref{mthm1}]
Proposition \ref{p:detectcpscvsnonquadratic} gives the result in the case that $f$ and $g$ have degree $d \geq 3$. 
Now consider the case $d=2$. In the non-critically coalescing case, $f^k$ uniquely determines $\mathcal{V}_f$ by Lemma \ref{l:nontypealphacandetectcvs} and uniquely determines 
$\mathcal{C}_f$ by Corollary~\ref{cor:detectcpsnocritcoal}. In the critically coalescing case, the fact that $f^k$ uniquely determines $\mathcal{C}_f$ and $\mathcal{V}_f$ follows from a combination of Lemma~\ref{l:detectcpscvsmequals2}, Lemma~\ref{l:notneeded} and Proposition~\ref{p:detectcritptscritcoalfixed}. \end{proof}

We remark that the converse to Theorem \ref{mthm1} does not hold. For an example, consider $f(z) = z^2$ and $g(z) = -z^2$. Observe that $\mathcal{C}_f = \mathcal{C}_g = \mathcal{V}_f = \mathcal{V}_g = \{ 0 , \infty \}$. However, for $k \geq 1$ we have $f^k(1) = 1$, but $g^k(1) = -1 \neq 1$ and so $f^k \neq g^k$. An example where $f$ and $g$ are not power maps is given below Theorem~\ref{t:firstcousinsevendegree}. We will use Theorem~\ref{mthm1} to help us prove Theorem~\ref{mthm2}. The following Lemma is classical.

\begin{lemma} \label{l:criticalpointsgiveMobius}
Let $f$ and $g$ be bicritical rational maps such that $\mathcal{C}_f = \mathcal{C}_g$.  Then $g = \mu \circ f$ for some M\"obius transformation $\mu$ sending $\mathcal{V}_f$ to $\mathcal{V}_g$. 
\end{lemma}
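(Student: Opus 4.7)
The plan is to reduce to a normal form where both $f$ and $g$ factor through the power map $z \mapsto z^d$, and then read off the desired $\mu$ from this factorization.

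First I would pick a single Möbius transformation $\phi$ that sends the common critical set $\mathcal{C}_f = \mathcal{C}_g = \{c_1,c_2\}$ to $\{0,\infty\}$, and replace $f$ and $g$ by their conjugates $\tilde f = \phi^{-1}\circ f\circ \phi$ and $\tilde g = \phi^{-1}\circ g\circ\phi$. Because conjugation preserves the critical set, both $\tilde f$ and $\tilde g$ are bicritical rational maps of degree $d$ whose critical points are $0$ and $\infty$, each of local degree $d$. By the standard normal form invoked in the proof of Lemma~\ref{l:elliptic} (cf.\ Milnor \cite{MilnorBicritical}), such a map has the form
\[
\tilde f(z) \;=\; \frac{\alpha_f z^d + \beta_f}{\gamma_f z^d + \delta_f}, \qquad
\tilde g(z) \;=\; \frac{\alpha_g z^d + \beta_g}{\gamma_g z^d + \delta_g},
\]
with nonzero determinants. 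Equivalently, there exist Möbius transformations $M_f,M_g$ such that $\tilde f = M_f\circ p$ and $\tilde g = M_g\circ p$, where $p(z)=z^d$.

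Next I would set $\tilde\mu := M_g\circ M_f^{-1}$, which is a Möbius transformation, and observe
\[
\tilde\mu\circ \tilde f \;=\; M_g\circ M_f^{-1}\circ M_f\circ p \;=\; M_g\circ p \;=\; \tilde g.
\]
Conjugating back by $\phi$ then yields $g = \mu\circ f$ with $\mu := \phi\circ \tilde\mu\circ \phi^{-1}$, which is again a Möbius transformation.

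Finally, to verify that $\mu$ sends $\mathcal{V}_f$ to $\mathcal{V}_g$, I would use that $\mathcal{C}_f = \mathcal{C}_g$: for each critical point $c_i$,
\[
\mu(f(c_i)) \;=\; g(c_i),
\]
so $\mu(\mathcal{V}_f) = \mu(f(\mathcal{C}_f)) = g(\mathcal{C}_g) = \mathcal{V}_g$. The only step that requires any care is the appeal to the normal form $\tilde f = M_f\circ p$; everything else is bookkeeping. Since this factorization is classical and is already cited in Lemma~\ref{l:elliptic}, the proof is essentially immediate.
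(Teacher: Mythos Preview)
Your proof is correct and follows essentially the same idea as the paper's: both arguments exploit the classical factorization of a bicritical map through the power map $z\mapsto z^d$ to produce the M\"obius transformation $\mu$. The only cosmetic difference is that the paper post-composes $f$ and $g$ with M\"obius coordinates $z,w$ (normalized at a third point $a$) to identify $z\circ f = w\circ g$ directly, whereas you first conjugate both maps to place the critical points at $0,\infty$ and then undo the conjugation; your route introduces an unnecessary pre-composition by $\phi$ that cancels in the end, but this does no harm.
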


\begin{proof} 

Let $c_1$ and $c_2$ be the two (distinct) critical points of $f$ and $g$ and let $a$ be an arbitrary point in $\hat{\mathbb{C}} \setminus \{c_1,c_2\}$. Note that $f(c_1)$, $f(c_2)$ and $f(a)$ are three distinct points in $\hat{\mathbb{C}}$ (because $f$ is $d$-to-$1$, counted with multiplicity). Let $z:\hat{\mathbb{C}} \to \hat{\mathbb{C}}$ be the M\"obius transformation satisfying 
\[z\circ f(c_1) = 0,\quad z\circ f(c_2) = \infty \textrm{ and } z\circ f(a) = 1.\] Similarly, $g(c_1)$, $g(c_2)$ and $g(a)$ are three distinct points in $\hat{\mathbb{C}}$. Let $w:\hat{\mathbb{C}} \to \hat{\mathbb{C}}$ be the M\"obius transformation satisfying 
\[w\circ g(c_1) = 0,\quad w\circ g(c_2) = \infty \textrm{ and } w\circ g(a) = 1.\] 
Then the meromorphic functions $z\circ f$ and $w\circ g$ have  $d$-fold zeroes at $c_1$ and $d$-fold poles at $c_2$. It follows that their quotient is constant. Since they coincide at $a$, they are equal. Set $\mu := w^{-1}\circ z:\hat{\mathbb{C}} \to \hat{\mathbb{C}}$. Then, $g = \mu \circ f$. In addition, 
\[\mu(\mathcal{V}_f) = \mu \circ f(\mathcal{C}_f) = g(\mathcal{C}_f) = g(\mathcal{C}_g) = \mathcal{V}_g.\]
 \end{proof}

We note that the conclusion of Lemma \ref{l:criticalpointsgiveMobius} does not hold for generic rational maps (see e.g. \cite{Goldberg}), though it does hold for all polynomials (\cite{ZakeriCritical}).  

\begin{lemma} \label{l:pakovich1}
Suppose $f$ is surjective and $f^k = (\mu \circ f)^k$ for $k \in \mathbb{N}$ and $\mu$ a M\"obius transformation.  Then 
\begin{enumerate} 
\item $f^k = (f \circ \mu)^k$, and 
\item $f^k \circ \mu = \mu \circ f^k$
\end{enumerate}

\end{lemma}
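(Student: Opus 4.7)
The plan is to exploit associativity of composition via two different regroupings of $(\mu \circ f)^k$, together with the following simple right-cancellation principle: if $A \circ f = B \circ f$ and $f$ is surjective, then $A = B$, since for every $w \in \hat{\mathbb{C}}$ there exists $z$ with $f(z) = w$ and hence $A(w) = B(w)$. No part of the argument will use that $f$ is bicritical, that $\mu$ is M\"obius (only that it is invertible with a well-defined composition), or the structure of $\Deck(f^k)$.

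For part (1), I would start from the hypothesis and regroup the right-hand side:
\[
f^{k-1} \circ f \;=\; f^k \;=\; (\mu \circ f)^k \;=\; (\mu \circ f)^{k-1} \circ \mu \circ f.
\]
Applying right-cancellation by $f$ then yields
\[
f^{k-1} \;=\; (\mu \circ f)^{k-1} \circ \mu \;=\; \mu \circ (f \circ \mu)^{k-1},
\]
where the last equality is simply re-associating the alternating string $\mu\, f\, \mu\, f\cdots \mu\, f\, \mu$. Post-composing on the left with $f$ then gives
\[
f^k \;=\; f \circ \mu \circ (f \circ \mu)^{k-1} \;=\; (f \circ \mu)^k,
\]
which is (1).

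For part (2), I would combine (1) with one more regrouping. Writing $f^k = (\mu \circ f)^k = \mu \circ f \circ (\mu \circ f)^{k-1}$ and post-composing with $\mu$ on the right, associativity gives
\[
f^k \circ \mu \;=\; \mu \circ f \circ (\mu \circ f)^{k-1} \circ \mu \;=\; \mu \circ (f \circ \mu)^k,
\]
and an appeal to (1) to rewrite $(f \circ \mu)^k$ as $f^k$ produces $\mu \circ f^k$, which is (2).

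There is really no genuine obstacle here; the only point requiring care is the right-cancellation step, which is why the hypothesis that $f$ be surjective is explicitly included (it is of course automatic for any nonconstant rational map, but the lemma is stated at a level of generality where it is worth noting). The lemma should be viewed as a purely formal device for lifting the identity $f^k = (\mu \circ f)^k$ to the symmetric commutation statement $\mu \circ f^k = f^k \circ \mu$, and to the symmetric form $f^k = (f \circ \mu)^k$.
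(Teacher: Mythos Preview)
Your proof is correct and follows essentially the same approach as the paper: right-cancel one $f$ using surjectivity, re-associate to obtain $(f\circ\mu)^k$, and then combine the hypothesis with part (1) via a regrouping to get the commutation. The paper phrases part (2) as a conjugation, computing $\mu^{-1}\circ f^k\circ\mu=(f\circ\mu)^k=f^k$, which is the same re-association you use written slightly differently.
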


\begin{proof}
Since $f$ is surjective, we can cancel one $f$ from the right side of
$$f^k =  (\mu \circ f)^k = (\mu \circ f)^{k-1} \circ \mu \circ f$$
to obtain 
\begin{equation} \label{eq:onedegreeless} 
f^{k-1} = (\mu \circ f)^{k-1} \circ \mu.
\end{equation} Therefore, postcomposing both sides with $f$ yields part (i),
 $$f^k =  f \circ (\mu \circ f)^{k-1} \circ \mu = (f \circ \mu)^k.$$
 
 Next, 
 $$\mu^{-1} \circ  f^k \circ \mu  = \mu^{-1} \circ (\mu \circ f)^k \circ \mu = (f \circ \mu)^k =f^k, $$
 with the leftmost equality due to the assumption $f^k = (\mu \circ f)^k$ and the rightmost equality due to part (i). 
\end{proof}

\begin{lemma} \label{l:MobiusInvolution}
Let $f$ and $g$ be bicritical rational maps, neither of which is a power map, such that $f^k = g^k$ for some $k \in \mathbb{N}$.  Then either $f = g$ or there exists a M\"obius involution $\mu$ such that 
\begin{enumerate}
\item  $g = \mu \circ f$, and 
\item $\mu$ fixes both $\mathcal{C}_f$ and $\mathcal{V}_f$ as sets. 
\end{enumerate}
\end{lemma}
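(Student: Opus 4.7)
The plan is to apply Theorem~\ref{mthm1} twice — once to the given pair $(f,g)$ to produce a M\"obius factor $\mu$ with $g = \mu \circ f$, and once to an auxiliary pair $(f, f \circ \mu)$ that also shares a $k$th iterate — and then finish with a short case analysis on how $\mu$ permutes $\mathcal{C}_f$ and $\mathcal{V}_f$.

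Assume $g \neq f$ (otherwise the conclusion is immediate). Theorem~\ref{mthm1} gives $\mathcal{C}_f = \mathcal{C}_g$ and $\mathcal{V}_f = \mathcal{V}_g$, and Lemma~\ref{l:criticalpointsgiveMobius} then produces a nonidentity M\"obius transformation $\mu$ with $g = \mu \circ f$ and $\mu(\mathcal{V}_f) = \mathcal{V}_g = \mathcal{V}_f$. What remains is to show $\mu(\mathcal{C}_f) = \mathcal{C}_f$ and that $\mu$ is an involution.

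To establish $\mu(\mathcal{C}_f) = \mathcal{C}_f$, I would set $f' \coloneqq f \circ \mu$ and invoke Lemma~\ref{l:pakovich1}(i) (whose hypothesis $f^k = (\mu \circ f)^k$ holds by assumption, and $f$ is surjective since it has degree $\geq 2$) to conclude $(f')^k = f^k$. Before applying Theorem~\ref{mthm1} to the pair $(f, f')$ I must verify $f' \neq f$: if $f' = f$ then $\mu \in \Deck(f)$, and an easy induction using $f \circ \mu = f$ yields $g^n = \mu \circ f^n$ for every $n$; combined with $g^k = f^k$ and surjectivity of $f^k$ this forces $\mu = \mathrm{id}$, a contradiction. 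Theorem~\ref{mthm1} applied to $(f, f')$ then gives $\mathcal{C}_{f'} = \mathcal{C}_f$, and since $\mathcal{C}_{f'} = \mu^{-1}(\mathcal{C}_f)$ we obtain $\mu(\mathcal{C}_f) = \mathcal{C}_f$.

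It remains to show $\mu$ is an involution. Since $\mu$ preserves the two-element sets $\mathcal{C}_f$ and $\mathcal{V}_f$ setwise, it acts on each as either the identity or a transposition. Because $f$ is not a power map, $\mathcal{C}_f \neq \mathcal{V}_f$, so $\mathcal{C}_f \cup \mathcal{V}_f$ contains at least three distinct points. If $\mu$ fixes both sets pointwise then it fixes three or more points and equals the identity, contradicting $\mu \neq \mathrm{id}$; in every other case $\mu^2$ fixes all of $\mathcal{C}_f \cup \mathcal{V}_f$ pointwise — hence at least three distinct points — and so $\mu^2 = \mathrm{id}$. The main obstacle, in my view, is identifying the right auxiliary map for the second application of Theorem~\ref{mthm1}: the choice $f' = f \circ \mu$ (rather than $\mu \circ f = g$, which only restates what we already have) is the key idea, and once it is in hand the remainder reduces to routine verification.
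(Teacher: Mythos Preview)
Your proof is correct and uses the same ingredients as the paper's proof: Theorem~\ref{mthm1} applied to $(f,g)$, Lemma~\ref{l:criticalpointsgiveMobius}, Lemma~\ref{l:pakovich1}(i), a second application of Theorem~\ref{mthm1} to an auxiliary map, and the observation that $f$ not a power map forces $|\mathcal{C}_f \cup \mathcal{V}_f| \geq 3$. The only difference is organizational: you establish $\mu(\mathcal{C}_f)=\mathcal{C}_f$ first (via the auxiliary $f' = f\circ\mu$) and then deduce the involution in one stroke, whereas the paper first proves $\mu$ is an involution via a case split on how $\mu$ acts on $\mathcal{V}_f$, and only afterwards uses the involution property to pass to the auxiliary $\mu\circ f\circ\mu$ and conclude $\mu(\mathcal{C}_f)=\mathcal{C}_f$; your ordering is slightly more economical but the approach is essentially the same.
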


\begin{proof}
By Theorem~\ref{mthm1}, $\mathcal{C}_f = \mathcal{C}_g$ and $\mathcal{V}_f = \mathcal{V}_g$.  Therefore Lemma \ref{l:criticalpointsgiveMobius} guarantees that there exists a M\"obius transformation $\mu$ such that $g = \mu \circ f$ and $\mu$ fixes $\mathcal{V}_f = \mathcal{V}_g$ as a set. To show that $\mu$ is an involution, we consider two cases. 
\begin{itemize}
 \item \emph{Case 1:} $\mu$ interchanges the points of $\mathcal{V}_f$.  Pick $z$ to be a fixed point of $\mu$; then $z$ and the two critical values $\mathcal{V}_f$ are all distinct.  But $\mu^2$ fixes each of these three points, so $\mu^2 = \textrm{Id}$. 
\item \emph{Case 2:} $\mu$ fixes $\mathcal{V}_f$ pointwise.  We have $f^k = g^k = (\mu \circ f)^k$ by assumption, so by Lemma  \ref{l:pakovich1} part (i) we have
$f^k = (f \circ \mu)^k$.  Note that $f \circ \mu$ is a bicritical rational map, so  Theorem~\ref{mthm1} implies the leftmost equality of 
$$\mathcal{C}_{f} = \mathcal{C}_{f \circ \mu} = \mu^{-1}(\mathcal{C}_f).$$  Therefore $\mu$ fixes $\mathcal{C}_f$ as a set; either $\mu$ interchanges the points of $\mathcal{C}_f$ or it fixes them pointwise.  Either way, $\mu^2$ fixes $\mathcal{C}_f$ pointwise.  Since $f$ is assumed to not be a power map, at least one point of $\mathcal{C}_f$ is not in $\mathcal{V}_f$. Thus $\mu^2$ fixes pointwise at least three distinct points (a point of $\mathcal{C}_f$ and both points of $\mathcal{V}_f$), so $\mu^2 = \textrm{Id}$. 
\end{itemize}
It remains to prove that $\mu$ fixes $\mathcal{C}_f$ as a set. Since $\mu$ is an involution, $\mu \circ f^k  \circ \mu= (\mu \circ f \circ \mu)^k$.
By Lemma \ref{l:pakovich1},
 \begin{equation} \label{eq:frompak1} \mu \circ f^k  \circ \mu = \mu \circ \mu \circ f^k = f^k. \end{equation}
So $f^k = (\mu \circ f \circ \mu)^k$.  Theorem~\ref{mthm1} gives $\mathcal{C}_f = \mathcal{C}_{\mu \circ f \circ \mu}$ (as well as $ \mathcal{V}_f = \mathcal{V}_{\mu \circ f \circ \mu}$).
This implies that $\mu$ fixes $\mathcal{C}_f$ as a set. 
\end{proof}

\begin{lemma} \label{l:evendegree}
Let $f$ and $g$ be bicritical rational maps such that 
\begin{enumerate}
\item neither $f$ nor $g$ is a power map, 
\item  $f^k = g^k$ for some $k \in \mathbb{N}$,
\item the degree of $f$ and $g$ is even, 
\item $g = \mu \circ f$ for some nonidentity M\"obius transformation $\mu$ that fixes both $\mathcal{C}_f$ and $\mathcal{V}_f$ as sets.  
\end{enumerate}
Then $\mu$ transposes the elements of $\mathcal{V}_f$ and transposes the elements of $\mathcal{C}_f$. 
\end{lemma}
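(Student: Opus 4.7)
The plan is to show the contrapositive by ruling out three configurations. Since $\mu$ is a non-identity M\"obius involution fixing each of $\mathcal{C}_f$ and $\mathcal{V}_f$ as a size-$2$ set, $\mu$ either fixes each set pointwise or transposes it; I rule out: \textbf{(I)} both pointwise, \textbf{(II)} $\mathcal{C}_f$ pointwise and $\mathcal{V}_f$ transposed, and \textbf{(III)} $\mathcal{C}_f$ transposed and $\mathcal{V}_f$ pointwise.

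Case (I) is immediate: as $f$ is not a power map, $\mathcal{C}_f \ne \mathcal{V}_f$, so $\mu$ would fix at least three distinct points and hence be the identity, a contradiction. For case (II), I normalize by a M\"obius conjugation so that $\mathcal{C}_f = \{0, \infty\}$, giving $\mu(z) = -z$ and $f(z) = (az^d + b)/(cz^d + e)$. Because $d$ is even, $f(-z) = f(z)$, so $\mu \in \Deck(f)$. An induction from $f \circ \mu = f$ gives $g^n = \mu \circ f^n$ for all $n$; the surjectivity of $f^k$ then turns $g^k = f^k$ into $\mu = \mathrm{id}$, a contradiction.

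Case (III) is the main obstacle. Normalizing so that $\mathcal{V}_f = \{0, \infty\}$, I have $\mu(z) = -z$, $\mathcal{C}_f = \{\alpha, -\alpha\}$ with $\alpha \ne 0$, and $f(z) = A(z-\alpha)^d/(z+\alpha)^d$ for some $A \in \mathbb{C}^\ast$. Using $d$ even, a direct computation yields the key identity $f(z) \cdot f(-z) = A^2$. In this normalization $\Deck(f)$ consists of M\"obius transformations fixing $\pm\alpha$ pointwise (by Lemma~\ref{l:elliptic} together with Proposition~\ref{p:elementaryfacts}(vi)), so $\mu \notin \Deck(f)$; hence $h := f\mu \ne f$ and $h^k = f^k$ by Lemma~\ref{l:pakovich1}(i), so Lemma~\ref{l:MobiusInvolution} applied to $(f, h)$ yields an involution $\nu$ with $h = \nu \circ f$ that fixes $\mathcal{C}_f$ setwise. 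A direct calculation gives $\nu(w) = A^2/w$, and the condition $\nu(\{\alpha, -\alpha\}) = \{\alpha, -\alpha\}$ forces $A^2 = \pm \alpha^2$.

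To close the argument I iterate $g = \mu \circ f = -f$ using the identity $f(-w) = A^2/f(w)$. When $A^2 = \alpha^2$, the additional identity $f(-\alpha^2/w) = \alpha^2/f(w)$ inductively gives $g^n = -\alpha^2/f^n$ for $n \ge 2$; then $g^k = f^k$ becomes $(f^k)^2 \equiv -\alpha^2$, impossible as $f^k$ is non-constant. When $A^2 = -\alpha^2$, one checks $f(\alpha^2/w) = f(w)$, yielding $g^{2n} = \alpha^2/f^{2n}$ and $g^{2n+1} = -f^{2n+1}$ for $n \ge 1$; both parities of $k$ produce algebraic identities contradicting the non-constancy of $f^k$. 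The hardest step is the bookkeeping in case (III); the enabling simplification is the identity $f(-z) = A^2/f(z)$, which reduces the iteration of $g$ to that of $f$ up to a M\"obius correction. This rules out all three configurations, so $\mu$ transposes both $\mathcal{C}_f$ and $\mathcal{V}_f$.
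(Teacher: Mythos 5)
Your proof is correct, and your handling of the case where $\mu$ transposes $\mathcal{C}_f$ and fixes $\mathcal{V}_f$ pointwise (your Case (III)) takes a genuinely different route from the paper. The paper argues globally: since $\mu$ commutes with $f^k$ and fixes $v_1,v_2$, each $f^k(v_i)$ lies in $\mathrm{Fix}(\mu)=\{v_1,v_2\}$, so $f$ is postcritically finite, whence $f^{2k}$ has exactly $d^{2k}+1$ simple fixed points; the involution $\mu$ permutes this set and already fixes $v_1,v_2$, but $d^{2k}+1-2=d^{2k}-1$ is odd (here is where $d$ even enters), forcing a third fixed point of $\mu$ and a contradiction. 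You instead normalize so that $\mathcal{V}_f=\{0,\infty\}$, $\mathcal{C}_f=\{\alpha,-\alpha\}$, $\mu(z)=-z$, extract the functional identities $f(z)f(-z)=A^2$ and $f(\alpha^2/z)=f(z)$, pin down $A^2=\pm\alpha^2$ by applying Lemma~\ref{l:MobiusInvolution} to the pair $(f, f\circ\mu)$, and then compute $g^n$ in closed form, contradicting the non-constancy of $f^k$. Both are valid. Your route is more computational but makes the role of the even-degree hypothesis very concrete; the paper's is shorter and avoids subcase bookkeeping. For your Case (II) you arrive, via normalization, at the same structural fact the paper uses ($\mu\in\Deck(f)$), so the arguments there are essentially the same.

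One small inaccuracy worth tightening: the identity $f(z)f(-z)=A^2$ in fact holds for \emph{all} $d$ (the factors of $(-1)^d$ cancel), so it is not where $d$ even is used. The even-degree hypothesis enters through the companion identity $f(\alpha^2/z)=f(z)$ — for odd $d$ this becomes $f(\alpha^2/z)=-f(z)$, which changes the recursion and destroys the contradiction. This is exactly consistent with Example~\ref{ex:oddDegree4thIterate}, where $d=3$, $g=-f$, and $f^4=g^4$. Isolating the identity $f(\alpha^2/z)=f(z)$ and flagging it as the step requiring even degree would make the dependence transparent.
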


Note the assumption of even degree in Lemma \ref{l:evendegree}.

\begin{proof}
If $\mu$ fixes the elements of $\mathcal{C}_f$ and $\mathcal{V}_f$ pointwise, then $\mu$ is the identity. First suppose that $\mu$ fixes $\mathcal{V}_f$ pointwise, but transposes the elements of $\mathcal{C}_f$. Then $f^k(v_i)$ is fixed under $f^{k}$, and so $f$ is postcritically finite. Since $\mu$ is not the identity, we must have $f^k(v_i) \in \{ v_1, v_2 \}$. We split into cases.
  
  \begin{itemize}
   \item \emph{Case 1}. $f^k(v_1) = v_1$. In this case we must also have $f^k(c_1) = c_1$. But then
   \[
    c_2 = \mu(c_1) = \mu (f^k(c_1)) = f^k(\mu(c_1)) = f^k(c_2).
   \]
Thus since $f^k(c_2) = c_2$ we have $f^k(v_2) = v_2$.
 \item \emph{Case 2.} $f^k(v_1) = v_2$. Then we must have $f^k(c_1) = c_2$, and then a similar computation to the above gives $f^k(c_2) = c_1$.
  \end{itemize}
In either case we have $f^{2k}(c_i) = c_i$. Now note that $f^{2k}$ has $d^{2k} + 1$ fixed points (counting multiplicity). However, if $f^k$ had repeated fixed points, then $f^k$ would have a parabolic fixed point, and so could not be postcritically finite. But this contradicts the fact that $f$ is postcritically finite. Thus $f^{2k}$ has exactly $d^{2k}+1$ fixed points.

To complete the argument, note that since $\mu$ commutes with $f^{2k}$, it must permute the $d^{2k}+1$ fixed points of $f^{2k}$. But since $\mu$ is an involution, all points must have period $1$ or $2$ under $\mu$. By assumption, $v_1$ and $v_2$ are fixed under $\mu$. However, since $d^{2k} + 1 - 2 = d^{2k} - 1$ is odd, there must be another fixed point of $f^{2k}$ which is fixed by $\mu$. But then $\mu$ has three fixed points, and so must be the identity. This is a contradiction.
  
  One can prove the case where $\mu$ is an involution which fixes $\mathcal{C}_f$ pointwise and transposes the elements of $\mathcal{V}_f$ in a similar way to the above. However, a quicker argument is as follows. In this case, we know that $\mu$ must belong to the deck group of $f$, so that $f \circ \mu = f$. But then we have $\mu \circ f^k = f^k \circ \mu = f^k$, which is true if and only if $\mu$ is the identity. Once again we have obtained a contradiction.
\end{proof}

\begin{theorem}\label{t:firstcousinsevendegree}
If $f$ and $g$ are bicritical rational maps of even degree, and neither $f$ nor $g$ is a power map, and $f^k = g^k$ for some $k \in \mathbb{N}$, then 
 $f^2 = g^2$. 
\end{theorem}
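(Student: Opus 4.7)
The goal is equivalent to showing $\mu \circ f = f \circ \mu$, where $\mu$ is the M\"obius involution provided by Lemma~\ref{l:MobiusInvolution}; for then $g^2 = (\mu \circ f)^2 = \mu^2 \circ f^2 = f^2$ using $\mu^2 = \mathrm{id}$. From Lemmas~\ref{l:MobiusInvolution} and~\ref{l:evendegree} we already know that $\mu$ is an involution which transposes both the critical-point pair $\mathcal{C}_f = \{c_1,c_2\}$ and the critical-value pair $\mathcal{V}_f = \{v_1,v_2\}$ of $f$, and from Lemma~\ref{l:pakovich1}(ii) that $f^k \circ \mu = \mu \circ f^k$. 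The task is therefore to promote this commutation with $f^k$ down to a commutation with $f$ itself.

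My approach is to apply the results already built up to the auxiliary map $\tilde f := \mu \circ f \circ \mu$. Since $\mu$ is an involution, $\tilde f$ is M\"obius-conjugate to $f$, hence also bicritical, of the same even degree, and not a power map; and by Lemma~\ref{l:pakovich1}(ii), $\tilde f^k = \mu \circ f^k \circ \mu = f^k$. The pair $(f,\tilde f)$ thus satisfies the hypotheses of Lemma~\ref{l:MobiusInvolution}, so either $\tilde f = f$ --- in which case $\mu \circ f \circ \mu = f$ immediately yields $\mu \circ f = f \circ \mu$ and we are done --- or $\tilde f = \tau \circ f$ for some nonidentity M\"obius involution $\tau$ fixing $\mathcal{C}_f$ and $\mathcal{V}_f$ setwise.

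The heart of the argument, and what I expect to be the main obstacle, is ruling out this second alternative. Applying Lemma~\ref{l:evendegree} to $(f,\tilde f)$ forces $\tau$ to also transpose both $\mathcal{C}_f$ and $\mathcal{V}_f$. Since $f$ is not a power map, $\mathcal{C}_f \neq \mathcal{V}_f$, so after relabeling we may assume $v_1 \notin \{c_1,c_2\}$; then both $\mu$ and $\tau$ send $c_1 \mapsto c_2$, $c_2 \mapsto c_1$, and $v_1 \mapsto v_2$, and the three-point rigidity of M\"obius transformations gives $\tau = \mu$. Substituting into $\tilde f = \tau \circ f$ produces $\mu \circ f \circ \mu = \mu \circ f$, hence $f \circ \mu = f$, which places $\mu$ in $\Deck(f)$. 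However, by Lemma~\ref{l:elliptic} together with the order bound of Proposition~\ref{p:elementaryfacts} part~\eqref{i:orderbound}, $\Deck(f)$ is the cyclic group of order $d$ generated by the rotation around the axis through $c_1$ and $c_2$, so every element fixes both critical points pointwise --- contradicting the fact that $\mu$ swaps them. This eliminates the second alternative, so $\tilde f = f$ and $f^2 = g^2$.
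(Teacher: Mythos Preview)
Your proof is correct, and its skeleton matches the paper's: introduce $\tilde f = \mu \circ f \circ \mu$, observe $\tilde f^k = f^k$ via Lemma~\ref{l:pakovich1}(ii), apply Lemma~\ref{l:MobiusInvolution} to the pair $(f,\tilde f)$, and rule out the alternative $\tilde f = \tau \circ f$ with $\tau \neq \mathrm{id}$.

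Where you diverge is in the elimination of that alternative, and your route is the cleaner one. The paper does not apply Lemma~\ref{l:evendegree} to $\tau$; instead it first derives $(\tau\mu)^2 = \mathrm{id}$, hence $\mu\tau = \tau\mu$, and then splits into two cases according to whether $\mu$ and $\tau$ share fixed points or swap each other's fixed points. The first case recovers $\tau = \mu$ and proceeds as you do; the second case invokes Lemma~\ref{l:evendegree} only for $\mu$, computes that $\tau$ must fix $\mathcal{V}_f$ pointwise, normalizes coordinates, and argues $\tau = \mathrm{id}$. You short-circuit all of this by noticing that Lemma~\ref{l:evendegree} applies equally well to the pair $(f,\tilde f)$ and $\tau$, forcing $\tau$ to transpose both $\mathcal{C}_f$ and $\mathcal{V}_f$; since $\mu$ does the same and $f$ is not a power map, three-point rigidity gives $\tau = \mu$ in one stroke. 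Your final contradiction---that $\mu \in \Deck(f)$ yet swaps the critical points, while every element of $\Deck(f) \cong \mathbb{Z}_d$ fixes them---is also slightly more conceptual than the paper's version, which cancels $f^k$ from $\mu \circ f^k = f^k$ to get $\mu = \mathrm{id}$.
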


\begin{proof}
By Lemma \ref{l:MobiusInvolution}, either $f = g$ or $g = \mu \circ f$ for some M\"obius involution $\mu$ that fixes both $\mathcal{C}_f$ and $\mathcal{V}_f$ as sets.  If $f=g$ we are done, so assume the latter. 
By Lemma \ref{l:pakovich1},
 \begin{equation}\mu \circ f^k  \circ \mu = \mu \circ \mu \circ f^k = f^k. \end{equation}
So $f^k = (\mu \circ f \circ \mu)^k$.  Theorem~\ref{mthm1} gives $\mathcal{C}_f = \mathcal{C}_{\mu \circ f \circ \mu}$ and $ \mathcal{V}_f = \mathcal{V}_{\mu \circ f \circ \mu}$.

Since $f$ is not a power map, $\mu \circ f \circ \mu$ is also not a power map.  Then  Lemma \ref{l:MobiusInvolution} gives that either $f = \mu \circ f \circ \mu$ or there exists a M\"obius involution $\nu$ such that  
\begin{equation} \label{eq:NuMuMess} f = \nu \circ  \mu \circ f \circ \mu \end{equation}
and $\nu$ fixes $\mathcal{C}_f$ and $\mathcal{V}_f$ as sets.  If $f = \mu \circ f \circ \mu$ we are done (since then $\mu \circ f \circ \mu \circ f  = f^2$), so assume such $\nu$ exists.   

Equation \eqref{eq:NuMuMess}   implies 
$$f = \nu \circ  \mu \circ (\nu \circ  \mu \circ f \circ \mu) \circ \mu =  (\nu \circ  \mu)^2  \circ f $$
Since $f$ is surjective, this implies $ ( \nu \circ  \mu)^2 = \textrm{Id}$, and hence $\nu \circ \mu = \mu \circ \nu$. 

Note that if $x$ is a fixed point of $\nu$,  then $\mu(x)$ is a fixed point of  $\mu \circ \nu  \circ \mu^{-1} = \nu \circ \mu \circ \mu^{-1} = \nu$, i.e. $\mu$ sends fixed points of $\nu$ to fixed points of $\nu$.  Hence $\mu$ fixes setwise the set of fixed points of $\nu$; similarly, $\nu$ fixes setwise the set of fixed points of $\mu$.  So either the fixed points of $\nu$ and $\mu$ coincide, or $\nu$ and $\mu$ interchange each other's fixed points. 

\medskip
\emph{Case 1:} $\mu$ and $\nu$ share the same set of fixed points.  Then, since M\"obius involutions are determined by their two fixed points, $\mu = \nu$.  So \eqref{eq:NuMuMess} gives $f = f \circ \mu$. Then from \eqref{eq:frompak1}
$$\mu \circ f^k = \mu \circ f^k \circ \mu = f^k,$$
so we may cancel a factor of $f^k$ from both sides, obtaining $\mu = \nu = \textrm{Id}$ and $f = g$. 

\medskip
\emph{Case 2:} $\mu$ and $\nu$ interchange each other's fixed points.  By Lemma \ref{l:evendegree}, $\mu$ interchanges the points of $\mathcal{C}_f$ and interchanges the points of $\mathcal{V}_f$.
Then $$v_i = f(c_i) = \nu \circ \mu \circ f \circ \mu (c_i) = \nu \circ \mu \circ f(c_{\neq i}) =  \nu \circ \mu (v_{\neq i}) = \nu(v_i),$$ meaning $\nu$ fixes $\mathcal{V}_f$ pointwise.  Without loss of generality (by conjugating $f$), assume $\mathcal{C}_f = \{0,\infty\}$ and $v_1=1$. 
 The assumption that $\mu$ interchanges $0$ and $\infty$ implies that $\mu$ has the form $\mu(z) = k/z$ for some $k \in \mathbb{C}$.  Then other critical value is $\mu(1) = k (\neq 1)$.  Now $\nu$ is an involution that fixes $1$ and $k$ pointwise, and fixes $\{0,\infty\}$ as a set.
Since $\nu$ fixes $0$ and $\infty$ as points, then $\nu$ has the form $\nu(z) = z k_2$ for some $k_2 \in \mathbb{C}$; but then the assumption that $\nu$ fixes $1$ as a point implies $k_2 = 1$, i.e. $\nu = \textrm{Id}$. Thus $f =  \mu \circ f \circ \mu$, as desired. 
\end{proof}

\color{black}

We remark that the conclusion of Theorem~\ref{t:firstcousinsevendegree} is not true in the odd degree case.

\begin{example} \label{ex:oddDegree4thIterate}
 Let $f(z) = \frac{z^3 - 1}{z^3 + 1}$ and $g(z) = - f(z)$. It is easy to see that $\mathcal{C}_f = \mathcal{C}_g = \{0, \infty\}$ and $\mathcal{V}_f = \mathcal{V}_g = \{ -1,1\}$. The critical portrait for $f$ is 
 \[
\xymatrix{
0 \ar[r]^{2:1}  & 1  \ar[r]   & \infty \ar[r]^{2:1}  & -1  \ar@/^1pc/[lll]
\\
}
\]
and the critical portrait for $g$ is 
 \[
\xymatrix{
0 \ar@/^/[r]^{2:1}  & -1  \ar@/^/[l]   & \infty \ar@/^/[r]^{2:1}  & 1  \ar@/^/[l]
}.
\]
 Since $f(f(0)) \neq g(g(0))$, we see that $f^2 \neq g^2$. On the other hand, a direct computation shows that $f^4 = g^4$.
\end{example}

As promised, we also include an example to show that the converse of Theorem~\ref{mthm1} does not hold, even if we exclude counterexamples which are power maps.

\begin{example} \label{ex:noSharedIterate}
Here we provide an example of bicritical rational maps $f$ and $g$ such that $\mathcal{C}_f = \mathcal{C}_g$ and $\mathcal{V}_f = \mathcal{V}_g$ but $f$ and $g$ do not share an iterate. Let $f(z) = \frac{2(z^2 - 1)}{16z^2-1}$ and $g(z) = \frac{z^2-16}{8(z^2-1)}$. Then we have $\mathcal{C}_f = \mathcal{C}_g =  \{ 0 , \infty \}$ and $\mathcal{V}_f = \mathcal{V}_g = \left\{ \frac{1}{8}, -2 \right\}$. However, a quick computation yields
\[
 f^2(z) = \frac{2(84z^4 - 8z^2-1)}{64z^4+32z^2-21}
\]
whereas
\[
g^2(z) = \frac{341z^4-672z^2+256}{8(21z^4-32z^2-64)}.
\]
Since $f^2 \neq g^2$ it follows from Theorem~\ref{t:firstcousinsevendegree} that $f^k \neq g^k$ for all $k \geq 1$.
\end{example}

Our current results allow us to complete the proof of Theorem~\ref{mthm2}.

\begin{proof}[Proof of Theorem~\ref{mthm2}]
 The first claim is precisely that of Theorem~\ref{t:firstcousinsevendegree}. We can henceforth assume that $f^2 =g^2$. To prove the second claim, note that by Lemma~\ref{l:MobiusInvolution}, we must have $g = \mu \circ f$ for some involution $\mu$. But then
 \[
  f^2 = g^2 = \mu \circ f \circ \mu \circ f.
 \]
 Since $f$ is surjective, we may cancel a copy of $f$ on the right to get $f = \mu \circ f \circ \mu$. Since $\mu$ is an involution, we see that $\mu$ is an automorphism of $f$. The case for $g$ is similar.
\end{proof}

\appendix

 \section{Symmetry Locus and Mixing}\label{symlocus}
 
 A motivation for the present work is to lay the foundations for an investigation of the structure of the symmetry locus $\Sigma_d$ in terms of mixings and matings of polynomials.   
 Recall our provisional definition (Definition \ref{def:mixedmating}) that a degree $d$ rational map $F$ is a \emph{mixing}\footnote{Another name for this construction could be the anti-mating. However, we avoid this terminology to avoid confusion with the work of Jung \cite{JungAntiMatings}.}  of postcritically finite degree $d$ polynomials $f$ and $g$ if $F^2 = (f \mate g)^2$ and $F \neq f \mate g$ for some for geometric mating $f \mate g$ of $f$ and $g$. (See \cite{MilnorMating,ShishikuraTanMating} for definitions and background on matings). This section contains mainly conjectures and observations obtained from looking at computer pictures. We hope to give a more rigorous treatment of these ideas in a later work.
 



The notion of the mixing of two polynomials seems to be very rich. For simplicity, we restrict the present discussion to the degree $2$ case. Recall that the symmetry locus in degree 2, $\Sigma_2$, may be parameterised by $c$ via the map $f_c(z) = c(z + 1/z)$. Such a map has critical points at $-1$ and $1$. It is not hard to see that there are many matings in the space $\Sigma_2$. Indeed, it can be shown that if $f$ is a postcritically finite quadratic polynomial, then if $f \mate f$ is not obstructed (equivalently, $f$ does not belong to the $1/2$-limb of the Mandelbrot set) then the mating $F = f \mate f$ belongs to $\Sigma_2$. However, there exist matings in $\Sigma_2$ which are not self-matings, as we show below.

We give a number of examples of mixings and their corresponding matings. Claims in these examples are given without proof, but may be verified by the assiduous reader. We include images showing the Julia sets, with arrows indicating the critical orbits of the maps.

\begin{example}
When $c \approx 0.221274 + 0.48342i$, the map $f_c$ is the self-mating of Douady's rabbit. Since $f_c$ is a mating and $f_c$ is a hyperbolic map, the forward orbits of the critical points $-1$ and $1$ are disjoint. 
\begin{figure}[ht!]
\centering
\begin{subfigure}{0.45\textwidth}
\includegraphics[width=0.9\linewidth]{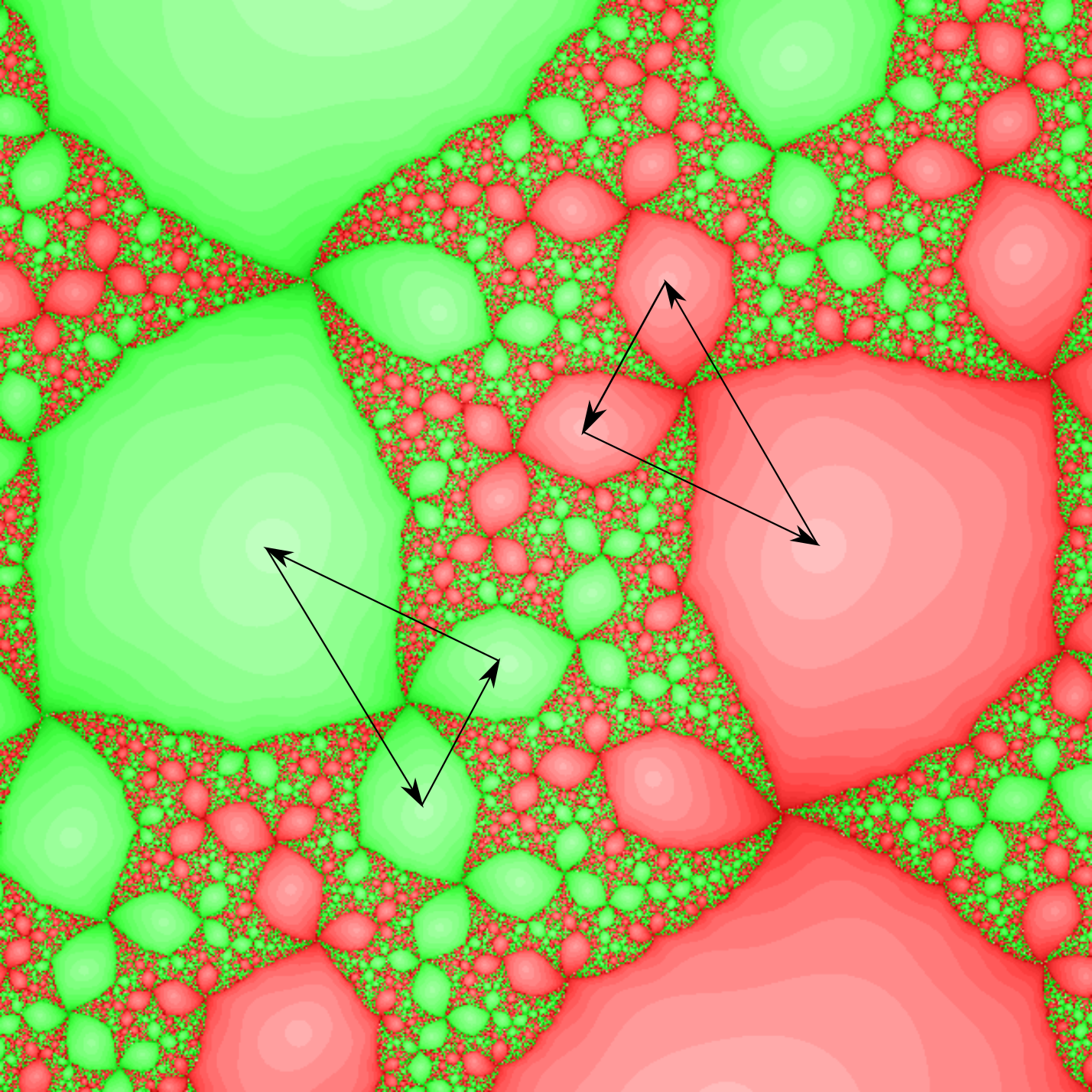}
\end{subfigure}
~
\begin{subfigure}{0.45\textwidth}
\includegraphics[width=0.9\linewidth]{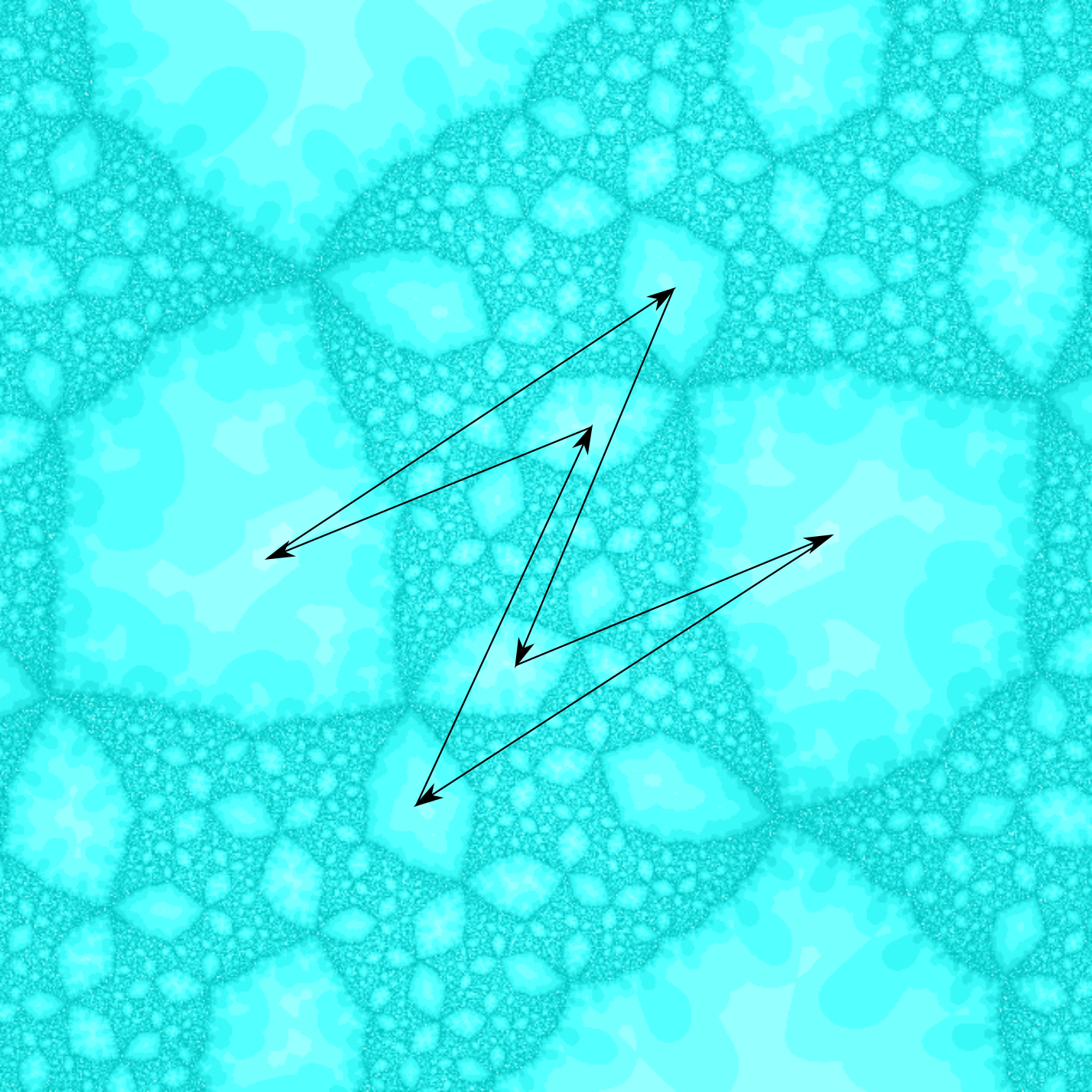}
\end{subfigure}
\caption{The Julia sets for the self-mating and self-mixing of Douady's rabbit.} 
\label{f:rabrab}
\end{figure} 
Indeed, both critical points belong to a period $3$ superattracting cycle. The map $f_{-c}$ is also a hyperbolic map, but it is not a mating since $f^3(-1) = 1$ and $f^3(1)=-1$, so the two critical points belong to the same period $6$ superattracting cycle. Accordingly, we say that $f_{-c}$ is the self-mixing of Douady's rabbit; see Figure~\ref{f:rabrab}.
\end{example}

\begin{example}
There exist matings in $\Sigma_2$ which are not self-matings. For a particular example, take $c \approx -0.471274 - 0.813859i $. This is the mating of Douady's rabbit with the airplane polynomial (or, equivalently, the mating of the airplane polynomial with Douady's rabbit, since these maps are equal by the results of \cite{Clusters}).
\begin{figure}[ht!]
\centering
\begin{subfigure}{0.45\textwidth}
\includegraphics[width=0.9\linewidth]{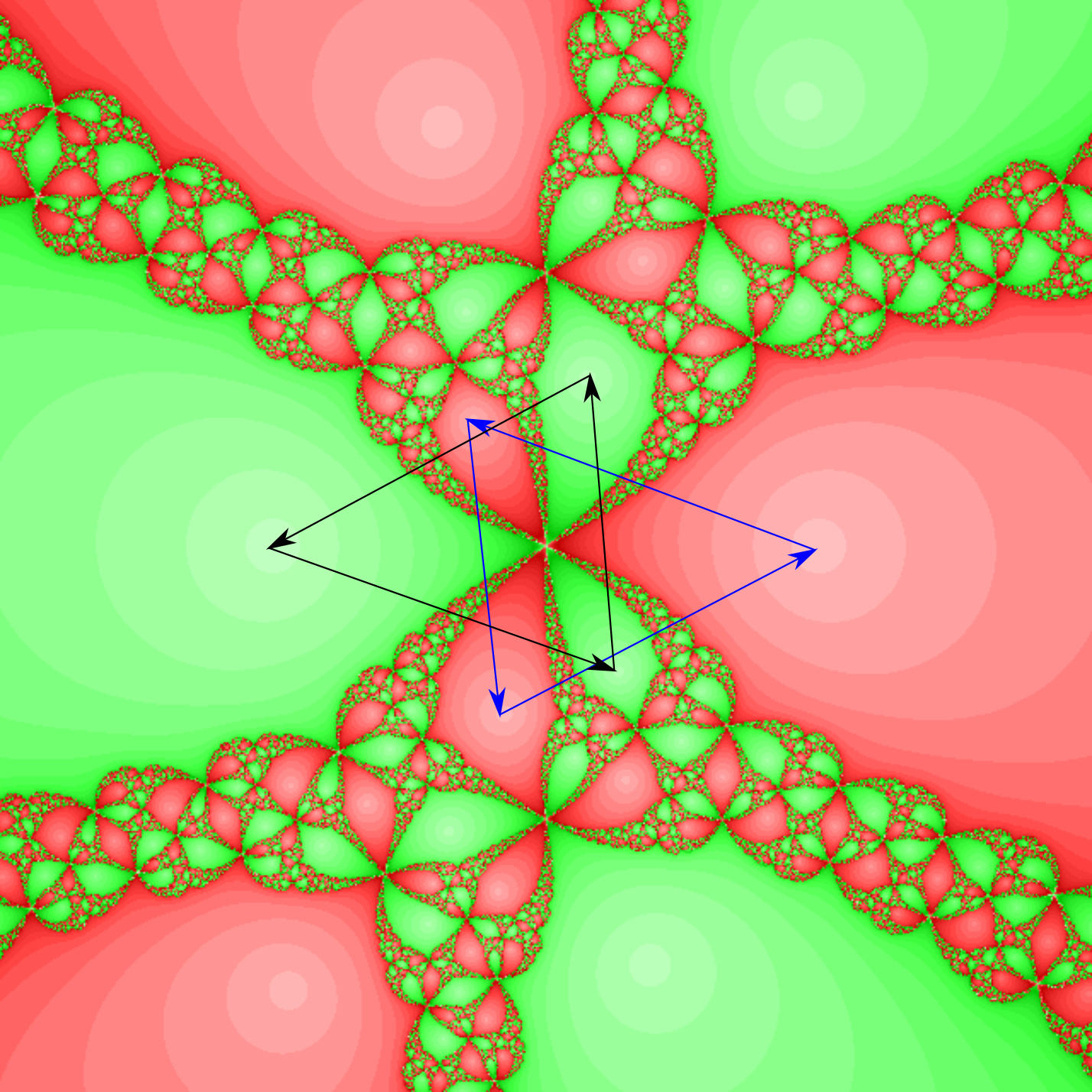}
\end{subfigure}
~
\begin{subfigure}{0.45\textwidth}
\includegraphics[width=0.9\linewidth]{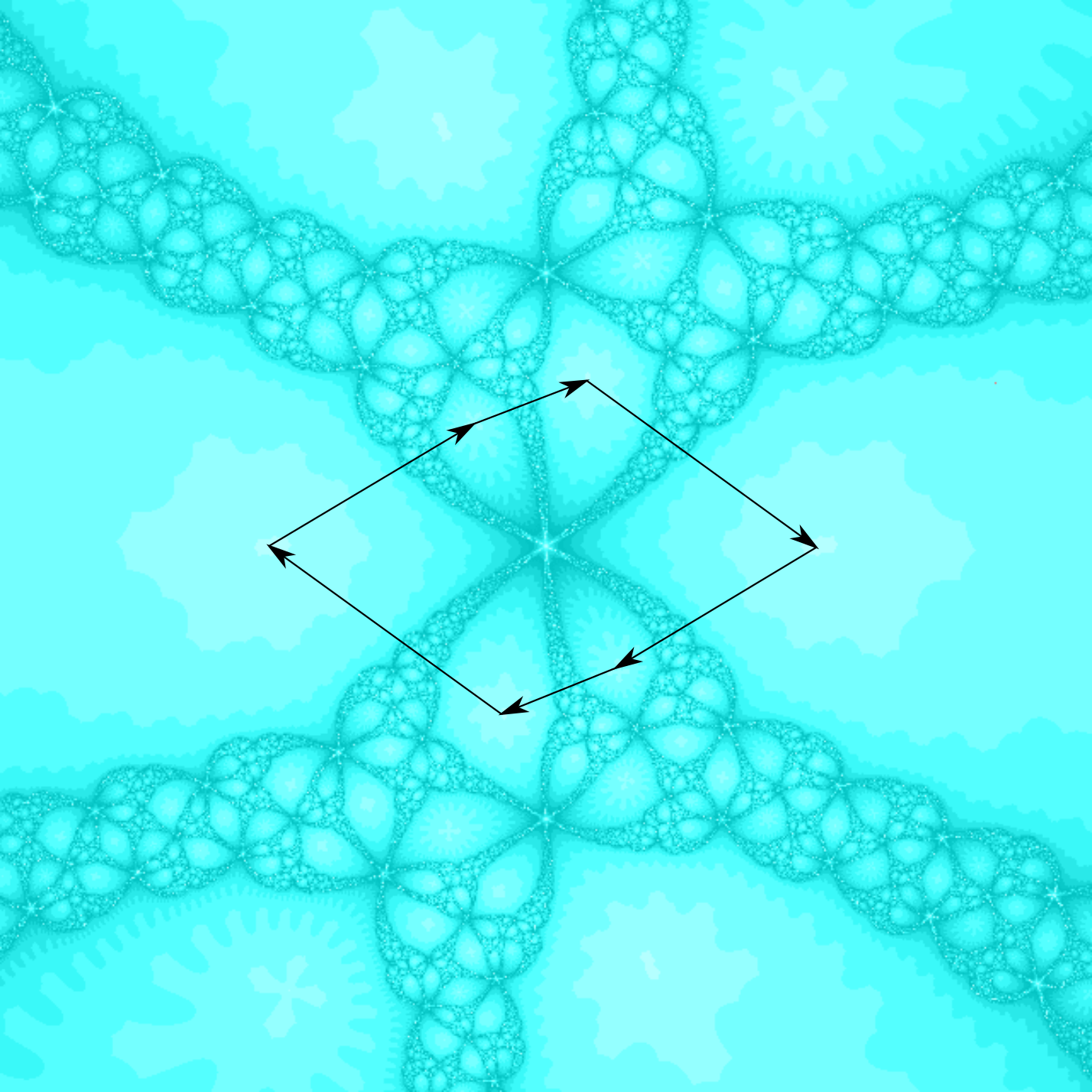}
\end{subfigure}
\caption{The Julia sets for the mating and mixing of Douady's rabbit with the airplane.} 
\label{f:rabair}
\end{figure} 
As with the previous example, the two critical points belong to disjoint period $3$ superattracting orbits. However, for the map $f_{-c}$, the two critical points belong to the same period $6$ superattracting cycle. Thus $f_{-c}$ is the mixing of Douady's rabbit and the airplane, see Figure~\ref{f:rabair}.
\end{example}


\begin{example}
It is possible to be a mixing and a mating. Let $c \approx 0.661848i$. Then $f_c$ is the self-mating of Kokopelli. On the other hand, $f_{-c}$ is the self-mating of co-Kokopelli. Accordingly, we see that $f_c$ is the self-mixing of co-Kokopelli and $f_{-c}$ is the self-mixing of Kokopelli, see Figure~\ref{f:kokoself}.
\begin{figure}[ht!]
\centering
\begin{subfigure}{0.45\textwidth}
\includegraphics[width=0.9\linewidth]{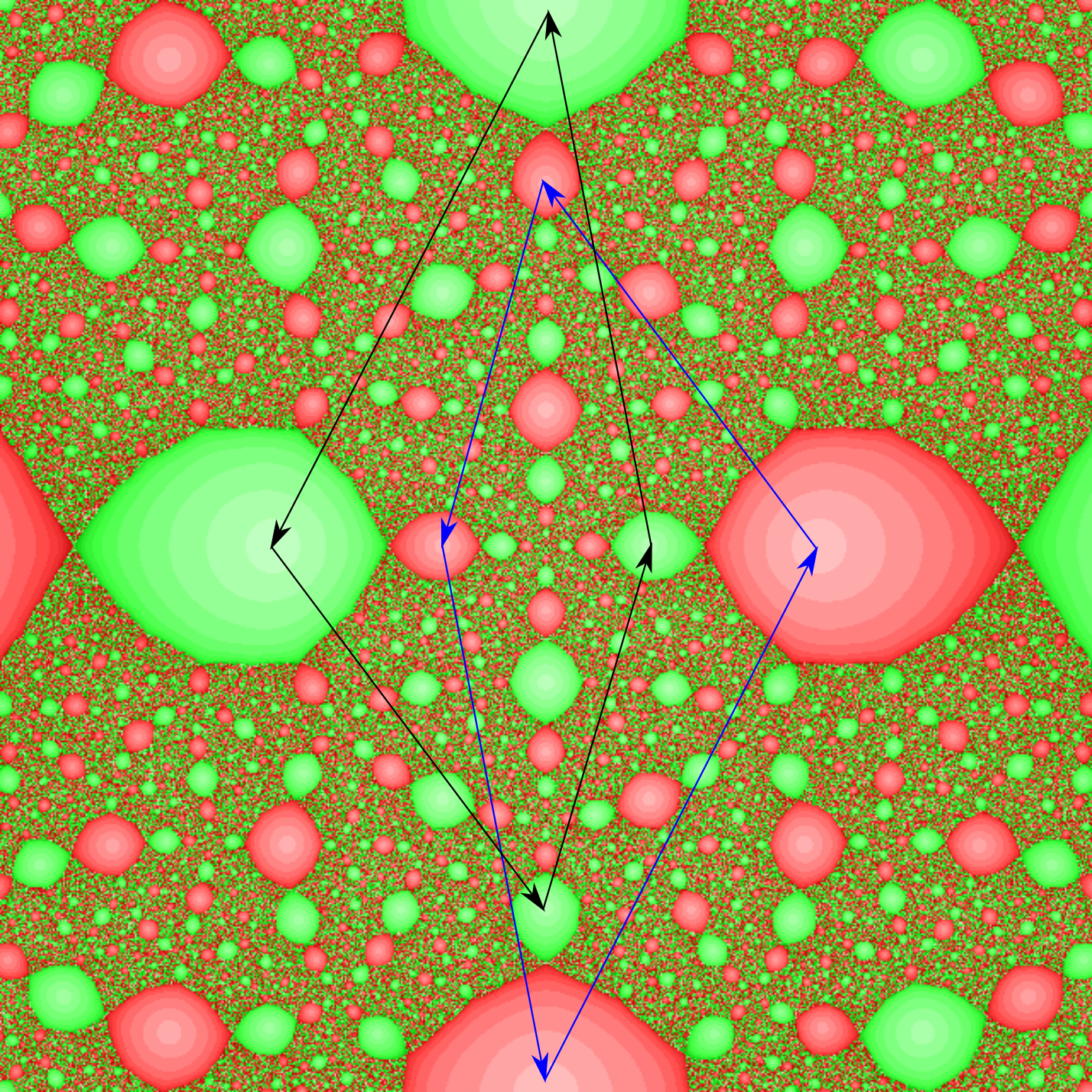}
\end{subfigure}
~
\begin{subfigure}{0.45\textwidth}
\includegraphics[width=0.9\linewidth]{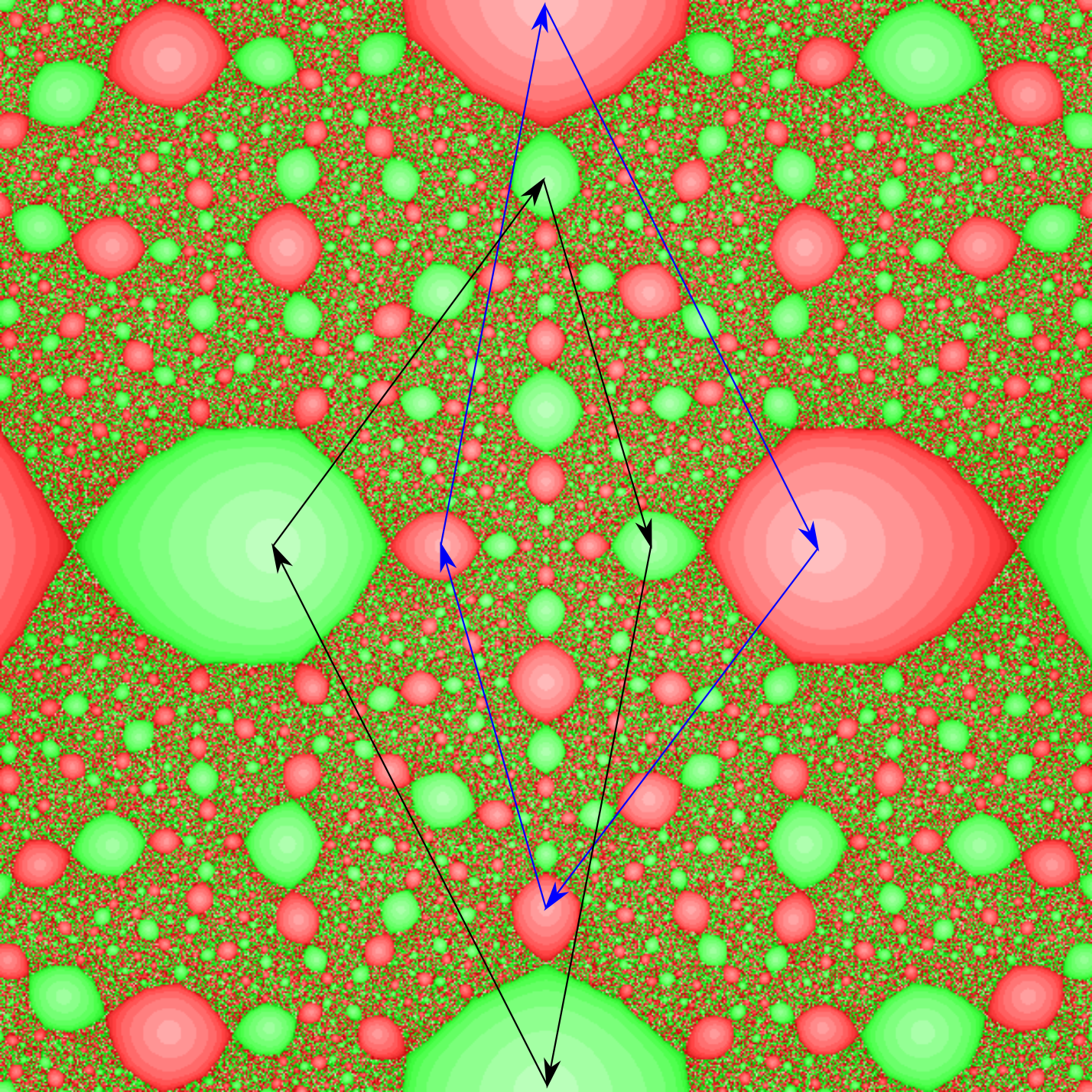}
\end{subfigure}
\caption{The self-mating and self-mixing of Kokopelli.} 
\label{f:kokoself}
\end{figure} 
This example also shows that the critical orbits in a mixing may be disjoint.
\end{example}

\begin{example}
A rather neat example is the following. Let  $c \approx 0.501604 + 0.531587i$. Then $f_c$ is the self-mating of the $1/4$-rabbit. Thus $f_{-c}$ is the self-mixing of the $1/4$-rabbit. However, it was shown by Rees that the map $f_{-c}$ is a shared mating\footnote{An excellent video exhibiting this shared mating can be found on Ch\'eritat's website: \url{https://www.math.univ-toulouse.fr/~cheritat/MatMovies/ReesSharedExample/}}: it is the mating of the double basilica with Kokopelli and the mating of co-Kokopelli with the Airbus polynomial, see Figure~\ref{f:shared}. 
\begin{figure}[ht!]
\centering
\begin{subfigure}{0.45\textwidth}
\includegraphics[width=0.9\linewidth]{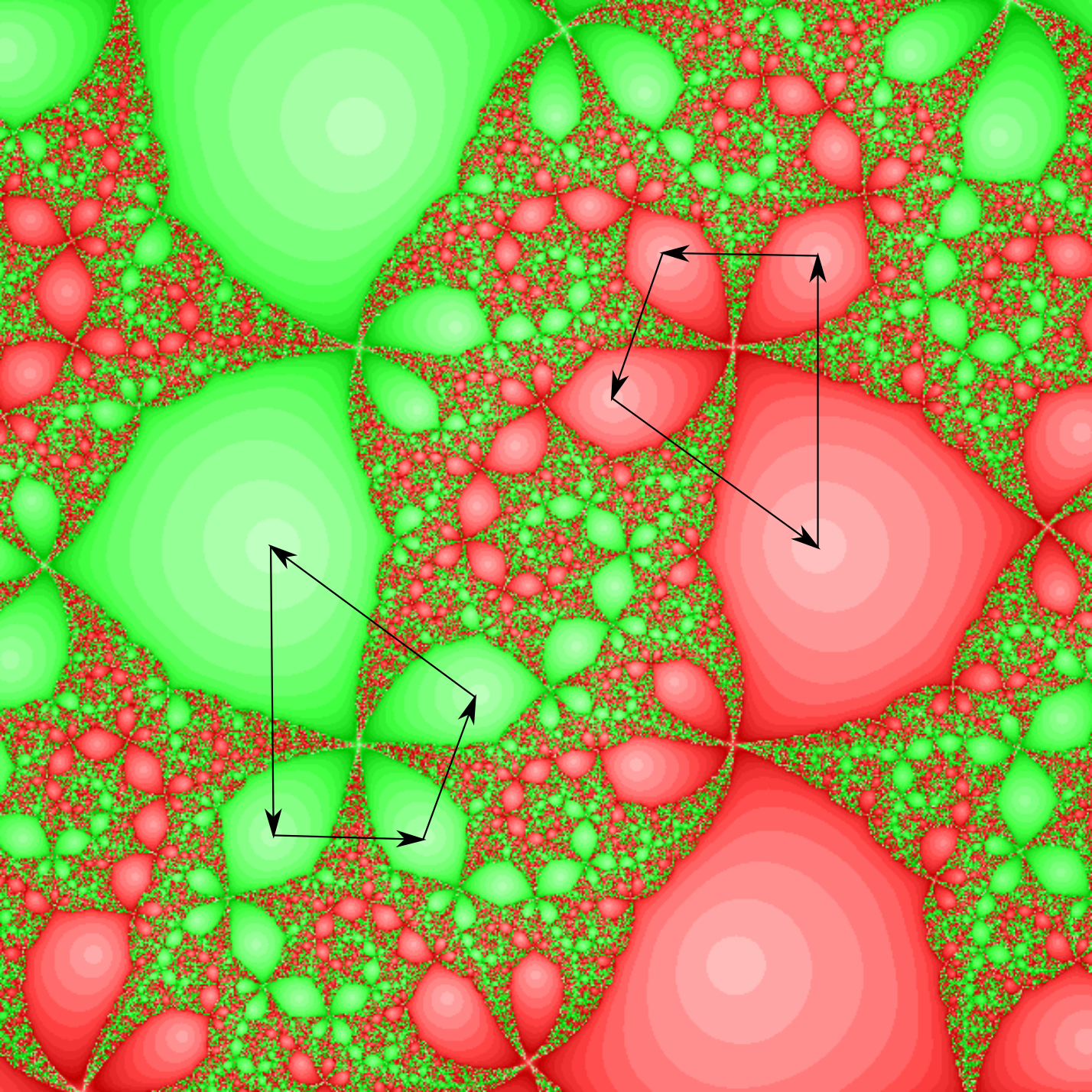}
\end{subfigure}
~
\begin{subfigure}{0.45\textwidth}
\includegraphics[width=0.9\linewidth]{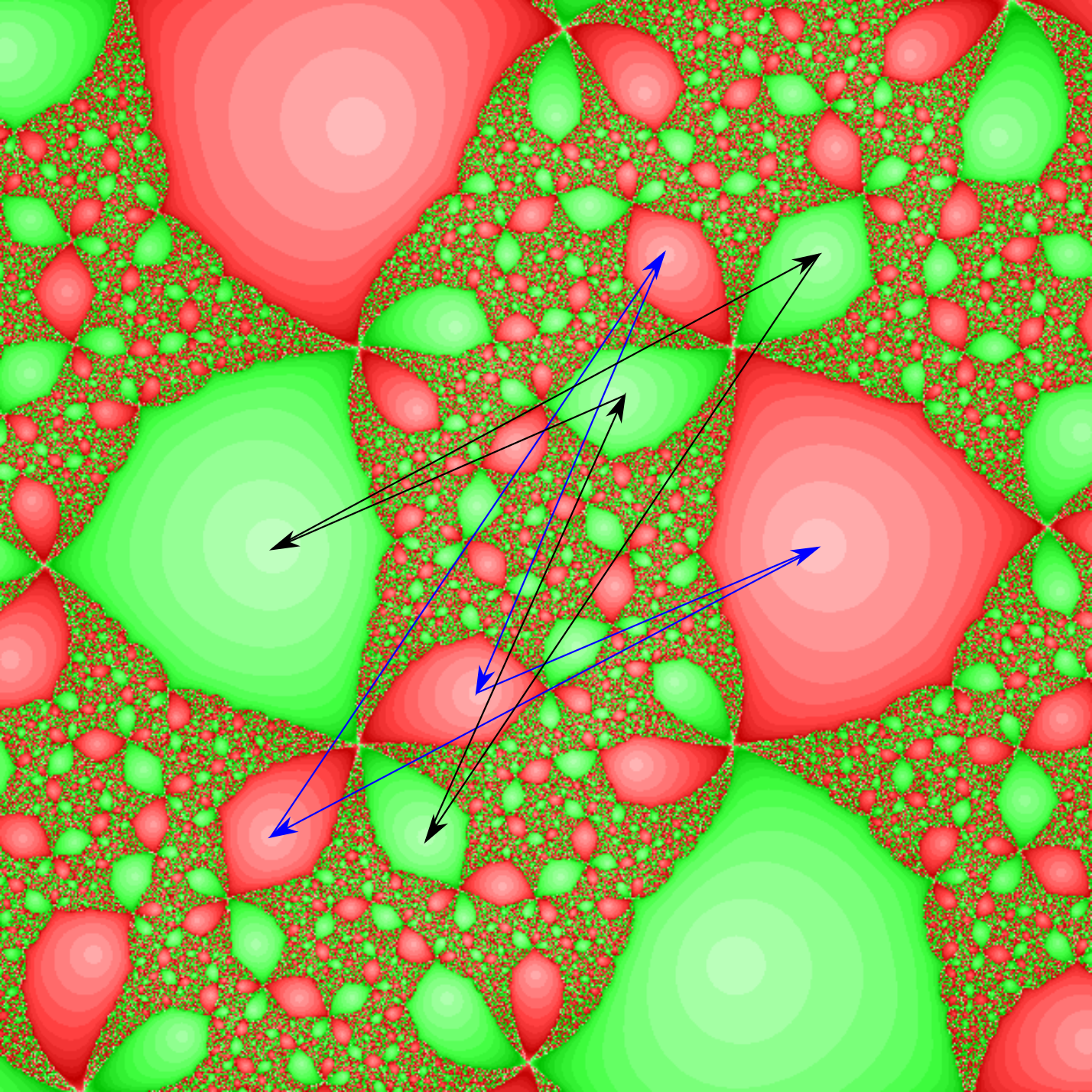}
\end{subfigure}
\caption{The self-mating of the $1/4$-rabbit is a shared mixing!.} 
\label{f:shared}
\end{figure} 
We then may state that $f_c$ is a \emph{shared mixing}, being a mixing of the double basilica with Kokopelli and the mixing of co-Kokopelli with the Airbus polynomial.
\end{example}

We end with a number of questions about mixings, which we hope will be the subject of future work.

\begin{question}
Is there a way of constructing a mixing in an analogous way to the topological mating of the formal mating of two polynomials? If so, for which pairs of polynomials is this construction well-defined? What are the obstructions?
\end{question}

\begin{question}
In \cite{MatingQuestions}, Meyer observed that when $F$ is a degree $d$ rational map with $J(F) = \hatC$, it was sometimes possible to find an \emph{anti-equator}; a simple closed curve which maps (isotopically) onto itself as a $d$-fold cover in an orientation-reversing way. He asked if it were possible to characterize such ``matings''. Could these matings observed by Meyer in fact be mixings? 
\end{question}

\bibliography{Cousins}
\bibliographystyle{plain} 

\end{document}